\numberwithin{equation}{section}
\def\BState{\State\hskip-\ALG@thistlm}
\newcommand\NoDo{\renewcommand\algorithmicdo{}}
\newcommand{\Break}{\State \textbf{break} }
\newcommand{\kronecker}{\raisebox{1pt}{\ensuremath{\:\otimes\:}}}
\newtheorem{theorem}{Theorem}[section]
\newtheorem{lemma}[theorem]{Lemma}
\newtheorem{definition}[theorem]{Definition}
\newtheorem{example}[theorem]{Example}
\newtheorem{remark}[theorem]{Remark}
\newtheorem{corollary}[theorem]{Corollary}
\newtheorem{preposition}[theorem]{Preposition}
\journal{Comput. Math. Appl.}
\newcommand{\mc}[1]{\mathcal {#1}}
\newcommand{\dg}{{\dagger}}
\newcommand{\n}{{*_N}}
\newcommand{\m}{{*_M}}
\begin{document}

\begin{frontmatter}

\title{ 
{\bf Further results on the Drazin inverse of even-order tensors}}

\author{ Ratikanta Behera $^*$, Ashish Kumar Nandi $^\dag$$^a$,  Jajati Keshari Sahoo $^\dag$$^b$}

\address{ 
          $^{*}$ Department of Mathematics and Statistics,\\
Indian Institute of Science Education and Research Kolkata,\\
 Nadia, West Bengal, India.\\
\textit{E-mail}: \texttt{ratikanta@iiserkol.ac.in}
                        
                        \vspace{.3cm}
                        
               $^{\dag}$ Department of Mathematics,\\
                       BITS Pilani, K.K. Birla Goa Campus, Goa, India
                        \\\textit{E-mail$^a$}: \texttt{ashish.nandi123\symbol{'100}gmail.com }
                        \\\textit{E-mail$^b$}: \texttt{jksahoo\symbol{'100}goa.bits-pilani.ac.in}
         
                        }

\begin{abstract}
The notion of the Drazin inverse of an even-order tensor with the Einstein product was introduced, very recently [J. Ji and Y. Wei. Comput. Math. Appl., 75(9), (2018), pp. 3402-3413]. In this article, we further elaborate this theory by producing  a few characterizations of the Drazin inverse and the W-weighted Drazin inverse of tensors.  In addition to these, we compute the Drazin inverse of tensors using different types of generalized inverses and full rank decomposition of tensors. We also address the solution to the multilinear systems using the Drazin inverse  and iterative (higher order Gauss-Seidel) method of tensors.  Besides this,  the convergence analysis of the iterative technique is also investigated within the framework of the Einstein product.

\end{abstract}

\begin{keyword}
 Einstein product\sep Tensor inversion \sep Drazin inverse \sep W-weighted Drazin inverse \sep Multilinear system.\\
 AMS Subject Classifications: 15A69; 15A09 
\end{keyword}

\end{frontmatter}

\section{Introduction}
\label{sec1}

The Drazin inverse plays an important role in various applications in singular differential \cite{liang2019} and difference equations \cite{meyer}, Markov chains \cite{camp,meyer}, investigation of Cesaro-Neumann iterations \cite{hart82}, Cryptography \cite{levin}, and  iterative methods \cite{miller,wei98}. Specifically, the Drazin inverse extensively used to solve the system of linear equations, where the iterative schemes lead from matrix splitting.  However, many interesting physical systems are required to store huge volumes of multidimensional data, and in recognition of potential modeling accuracy, matrix representation of data analysis is not enough to represent all the information. Tensors are natural multidimensional generalizations of matrices, which efficiently solve these problems. In this context, Ji and Wei \cite{Wei18} introduced the Drazin inverse of an even-order tensor through the core-nilpotent decomposition to solve singular tensor equations. It will be more applicable if we study the characterization of the Drazin inverse of tensors, and hence this inverse of tensors will open different paths in the above areas.

On the other hand, the concept of tensor-structured numerical methods have opened new perspectives for solving multilinear systems, recently.   Many computational and theoretical problems require different types of generalized inverses when a tensor is singular or arbitrary order. The authors of \cite{bral} discussed the representations and properties of the ordinary tensor inverse and introduced tensor-based iterative methods to solve high-dimensional Poisson problems in the multilinear system framework. This interpretation is extended to the Moore-Penrose inverse of tensors in \cite{bm, liang2019} and discussed the solution of multilinear systems and tensor nearness problem associated with tensor equations. Using such theory of Einstein product, Liang et al. \cite{liang} investigated necessary and sufficient conditions for the invertibility of tensors, and proposed the LU and the Schur decompositions of a tensor. Further, Stanimirovic et al. \cite{stan} introduced some basic properties of the range and null space of tensors, and the adequate definition of the tensor rank (i.e., reshaping rank). In view of reshape rank, Behera et al. \cite{behera18} discussed full rank decomposition of tensors  via Einstein product. The vast work on the generalized inverse of tensors \cite{bm,  ji2017, jin2017, stan, sun} and its applications to the solution of multilinear systems \cite{bral, Wei18}, motivate us to study the characterizations of the Drazin inverse, W-weighted Drazin inverse and iterative technique in the framework of tensors.

In this paper, we further study the Drazin inverse of tensors. This study can lead to the enhancement of the computation of the  Drazin inverse of tensors along with solutions of multilinear structure in multidimensional systems. In this regard, we discuss different characterizations of the Drazin inverse and W-weighted Drazin inverse of tensors.  In addition to these, some new methods for computing the  Drazin inverse of a tensor is proposed. Since the reduction of spatial dimensions and the generalized inverse of tensors needs to solve tensor-based partial differential equations, here we concentrate on the tensor iterative method (higher order Gauss-Seidel) and its convergence analysis using the theory of Einstein product.

\subsection{Organization of the paper}
The rest of the paper is organized as follows. 
In Section 2, we discuss some notations and
definitions, which are the necessary ingredient for proving the main results in Sections 3-5. Several characterizations of the Drazin inverse are discussed in Section 3. Besides these, we have computed the Drazin inverse of  tensors with the help of other generalized inverses. The notion of W-weighted Drazin inverse and a few properties of this inverse are introduced in Section 4. Then taking advantage of the Drazin inverse of tensors, we discuss the solution of multilinear systems in Section 5. In addition to these, the convergence analysis of the iterative technique is also investigated within the framework of the Einstein product. In Section 6, we conclude this paper with some remarks.

\section{Preliminaries}
\subsection{Some notations and definitions}

For convenience, we first briefly explain some of the terminologies which will be used here onwards.  Let $\mathbb{R}^{J_1\times J_2\times \cdots\times J_N }(\mathbb{C}^{J_1\times J_2\times \cdots\times  J_N)}$  be the set of  order $N$ and dimension $J_1 \times J_2 \times \cdots \times J_N$ tensors over the real (complex) field $\mathbb{R} (\mathbb{C})$, where $J_1, J_2, \cdots , J_N$ are positive integers. An order $N$ tensor is denoted as $\mc{A} = (a_{j_1,j_2,\cdots ,j_N})$. Note that throughout the paper, tensors are represented in calligraphic letters like  $\mc{A}$, and the notation $(\mc{A})_{j_1,j_2, \cdots , j_N}= a_{j_1,j_2, \cdots , j_N}$ represents the scalars. We use some additional notations to simplify our representation,
$$\textbf{j}(N) = \{ j_1, j_2, \cdots, j_N ~|~ 1 \leq j_k \leq J_k, k = 1, 2, \cdots, N\},~~ and ~~~\textbf{J}(N) = J_1 \times J_2 \times \cdots \times J_N.$$ 
Further, we denote $\hat{\textbf{J}}(N) = \{ J_1, J_2, \cdots, J_N\}$.  Let the tensor $ \mc{A} \in \mathbb{C}^{I_1\times I_1\times \cdots\times I_N\times J_1\times J_2\times \cdots\times J_N }$.  In connection with these notations,  the tensor $\mc{A} = (a_{i_1,i_2,\cdots i_N, j_1,j_2,\cdots,j_N})_{1\leq i_k \leq I_k, 1\leq j_l \leq J_l }, k=1,2,\cdots, N$ and $l=1,2,\cdots, N$, is denoted by $\mc{A} = (a_{\textbf{i}(N),\textbf{j}(N)})$. The Einstein product \cite{ein} $ \mc{A}\n\mc{B} \in \mathbb{C}^{\textbf{I}(N)\times\textbf{J}(N)}$ of tensors $\mc{A} \in \mathbb{C}^{\textbf{I}(N)\times\textbf{K}(N)}$ and $\mc{B} \in
\mathbb{C}^{\textbf{K}(N)\times\textbf{J}(N)}$  is defined
by the operation via $\n$
\begin{equation*}\label{Eins}
(\mc{A}\n\mc{B})_{\textbf{i}(N),\textbf{j}(N)}
 =\displaystyle\sum_{k_1...k_N}a_{i_1...i_N{k_1...k_N}}b_{{k_1...k_N}{j_1...j_N}} = \displaystyle\sum_{\textbf{k}(N)} a_{\textbf{i}(N), \textbf{k}(N)}b_{\textbf{k}(N), \textbf{j}(N)}.
\end{equation*}
In particular, if $\mc{B} \in \mathbb{C}^{\textbf{K}(N)}$, then $\mc{A}\n\mc{B} \in \mathbb{C}^{\textbf{I}(N)}$ and 
\begin{equation*}
(\mc{A}\n\mc{B})_{\textbf{i}(N)} = \displaystyle\sum_{{\textbf k}(N)}
a_{\textbf{i}(N), \textbf{k}(N)}b_{\textbf{k}(N)}.
\end{equation*}
The Einstein product is used in the study of the theory of relativity \cite{ein} and in the area of continuum mechanics (\cite{lai}). Using such theory of Einstein product, the range space and null space of a tensor  $\mc{A}\in \mathbb{C}^{\textbf{I}(M)\times\textbf{J}(N)}$ was introduced in \cite{Wei18, stan}, as follows.

$$
\mathfrak{R}(\mc{A}) = \left\{\mc{A}\n\mc{X}:~\mc{X}\in\mathbb{R}^{\textbf{J}(N)}\right\}\mbox{ and } \mc{N}(\mc{A})=\left\{\mc{X}:~\mc{A}\n\mc{X}=\mc{O}\in\mathbb{R}^{\textbf{I}(M)}\right\},
$$
where the tensor $\mc{O}$ denotes the {\it zero tensor}, i.e., all the entries are zero. As a consequence of the definition of the range space and null space of a tensor, it is clear that  $\mc{N}(A)$ is a subspace of $\mathbb{C}^{\textbf{J}(N)}$ and $\mathfrak{R}(\mc{A})$ is a subspace of $\mathbb{C}^{\textbf{I}(M)}$. The relation of range space and some properties of range and null spaces are discussed in \cite{Wei18,stan}. Here, we collect some known results which will be  used in this paper.
\begin{lemma}[Lemma 2.2, \cite{stan}]\label{range-stan}
Let  $\mc{A}\in \mathbb{R}^{\textbf{I}(M)\times \textbf{J}(N)}$,  $\mc{B}\in \mathbb{R}^{\textbf{I}(M)\times \textbf{K}(L)}.$ Then $\mathfrak{R}(\mc{B})\subseteq\mathfrak{R}(\mc{A})$ if and only if there exists  $\mc{U}\in \mathbb{R}^{\textbf{J}(N)\times \textbf{K}(L)}$ such that
$\mc{B}=\mc{A}\n\mc{U}.$
\end{lemma}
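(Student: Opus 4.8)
The plan is to prove the two implications separately. The $(\Leftarrow)$ direction follows quickly from associativity of the Einstein product, while the $(\Rightarrow)$ direction requires constructing $\mc{U}$ column by column. For the easy direction, suppose $\mc{B}=\mc{A}\n\mc{U}$ for some $\mc{U}\in\R^{\textbf{J}(N)\times\textbf{K}(L)}$. Every element of $\mathfrak{R}(\mc{B})$ has the form $\mc{B}\kl\mc{Y}$ with $\mc{Y}\in\R^{\textbf{K}(L)}$, and by associativity $\mc{B}\kl\mc{Y}=(\mc{A}\n\mc{U})\kl\mc{Y}=\mc{A}\n(\mc{U}\kl\mc{Y})$. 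Since $\mc{U}\kl\mc{Y}\in\R^{\textbf{J}(N)}$, this exhibits $\mc{B}\kl\mc{Y}$ as a member of $\mathfrak{R}(\mc{A})$, and therefore $\mathfrak{R}(\mc{B})\subseteq\mathfrak{R}(\mc{A})$.

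For the converse, assume $\mathfrak{R}(\mc{B})\subseteq\mathfrak{R}(\mc{A})$. The idea is to regard $\mc{B}$ as a family of ``columns'' indexed by the multi-index $\textbf{k}(L)$. For each fixed $\textbf{k}(L)$ I would take the standard basis tensor $\mc{E}_{\textbf{k}(L)}\in\R^{\textbf{K}(L)}$, whose single nonzero entry is $1$ at position $\textbf{k}(L)$; then $\mc{B}\kl\mc{E}_{\textbf{k}(L)}\in\R^{\textbf{I}(M)}$ picks out the corresponding column of $\mc{B}$, so that $(\mc{B}\kl\mc{E}_{\textbf{k}(L)})_{\textbf{i}(M)}=b_{\textbf{i}(M),\textbf{k}(L)}$. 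This column lies in $\mathfrak{R}(\mc{B})$, hence in $\mathfrak{R}(\mc{A})$ by hypothesis, so there exists $\mc{X}_{\textbf{k}(L)}\in\R^{\textbf{J}(N)}$ with $\mc{B}\kl\mc{E}_{\textbf{k}(L)}=\mc{A}\n\mc{X}_{\textbf{k}(L)}$.

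Finally, I would glue these solutions into one tensor $\mc{U}\in\R^{\textbf{J}(N)\times\textbf{K}(L)}$ defined by $u_{\textbf{j}(N),\textbf{k}(L)}=(\mc{X}_{\textbf{k}(L)})_{\textbf{j}(N)}$, and check $\mc{A}\n\mc{U}=\mc{B}$ entrywise: for all $\textbf{i}(M)$ and $\textbf{k}(L)$ one has $(\mc{A}\n\mc{U})_{\textbf{i}(M),\textbf{k}(L)}=\sum_{\textbf{j}(N)}a_{\textbf{i}(M),\textbf{j}(N)}u_{\textbf{j}(N),\textbf{k}(L)}=(\mc{A}\n\mc{X}_{\textbf{k}(L)})_{\textbf{i}(M)}=b_{\textbf{i}(M),\textbf{k}(L)}$. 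The main obstacle is organizational rather than conceptual: one must manage the multi-index bookkeeping carefully so that assembling the independently chosen column-solutions $\mc{X}_{\textbf{k}(L)}$ into a single $\mc{U}$ truly reproduces $\mc{B}$ under the Einstein contraction. This argument is the tensor analogue of the classical matrix fact that $\mathfrak{R}(B)\subseteq\mathfrak{R}(A)$ if and only if $B=AU$, and once the column-extraction device is in place the verification reduces to the direct computation above.
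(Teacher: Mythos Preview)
Your argument is correct in both directions: the $(\Leftarrow)$ implication via associativity of the Einstein product is immediate, and for $(\Rightarrow)$ your column-by-column construction using the standard basis tensors $\mc{E}_{\textbf{k}(L)}$ and the entrywise verification that $\mc{A}\n\mc{U}=\mc{B}$ are sound.

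Note, however, that the paper does not supply its own proof of this lemma; it is quoted verbatim as Lemma~2.2 of \cite{stan} and used as a preliminary fact. So there is no in-paper argument to compare against. Your proof is a valid self-contained justification of the cited result and is essentially the natural tensor analogue of the standard matrix proof; the original reference likely proceeds either in the same column-extraction fashion or by invoking the reshape map to reduce to the matrix case.
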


Adopting the definition of range space and null space,  Ji and Wei \cite{Wei18} discussed the index of a tensor, as follows.
\begin{definition}{\cite{Wei18}}
Let $\mc{A} \in \mathbb{C}^{\textbf{I}(N) \times \textbf{I}(N)},$ and k be the smallest nonnegative integer  such that, $R(\mc{A}^k)= R(\mc{A}^{k+1}).$ Then $k$ is called the index of $\mc{A}$ and denoted by $ind(\mc{A}).$ 
\end{definition}

We now move to the definition of the Drazin inverse of a tensor, which was studied  in Theorem 3.3 \cite{Wei18}  in the context of range space of a tensor, as follows.  
\begin{definition}\label{dz1.1}  Let $\mc{A} \in \mathbb{C}^{\textbf{I}(N) \times \textbf{I}(N)}$ be a tensor with $ind(\mc{A}) = k$. The tensor $\mc{X} \in \mathbb{C}^{\textbf{I}(N) \times \textbf{I}(N)} $ satisfying the following three tensor equations:
\begin{eqnarray*}
&&(1)~\mc{A}^{k+1}\n\mc{X} = \mc{A}^{k},\\
&&(2)~\mc{X}\n\mc{A}\n\mc{X} = \mc{X},\\
&&(3)~\mc{A}\n\mc{X} = \mc{X}\n\mc{A},
\end{eqnarray*}
is called  the Drazin inverse of $\mc{A}$, and is
denoted by $\mc{A}^{D}$. Specifically, when k = 1, $\mc{A}^D$ is called
the group inverse of $\mc{A}$ and denoted by $\mc{A}^\#$.
\end{definition}

At the same time, the authors of \cite{Wei18} discussed the existence of the Drazin inverse of a tensor. In view of this, we have the following result for the uniqueness of the Drazin inverse of a tensor. 
\begin{theorem}\label{mr1.1}
Let $\mc{A} \in \mathbb{C}^{\textbf{I}(N) \times \textbf{I}(N)}$ be a tensor with $ind(\mc{A}) = k$, then the Drazin inverse $\mc{A}^{D}$ is unique.
\end{theorem}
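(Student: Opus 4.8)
The plan is to assume that two tensors $\mc{X}$ and $\mc{Y}$ both satisfy the three defining equations (1)--(3) and to show they must coincide. First I would record the elementary consequences of the defining equations that make the Einstein product behave like ordinary matrix multiplication on the relevant commutative subalgebra. From (3) the factor $\mc{A}$ commutes with $\mc{X}$, hence with every power $\mc{A}^{j}$, so that $\mc{X}^{k+1}\n\mc{A}^{k+1} = (\mc{X}\n\mc{A})^{k+1}$; and from (2) together with (3) one has $\mc{X}\n\mc{A}\n\mc{X}=\mc{X}$, whence $\mc{X}\n\mc{A}$ and $\mc{A}\n\mc{X}$ are idempotent under $\n$ and, by a short induction on $j$, $\mc{X} = \mc{X}^{j+1}\n\mc{A}^{j}$ for every $j\ge 0$; in particular $\mc{X} = \mc{X}^{k+1}\n\mc{A}^{k}$. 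The same three identities hold verbatim for $\mc{Y}$.

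The heart of the argument is to show that $\mc{X}$ and $\mc{Y}$ are both equal to the single tensor $\mc{X}\n\mc{A}\n\mc{Y}$. For $\mc{X}$, I would start from $\mc{X} = \mc{X}^{k+1}\n\mc{A}^{k}$, substitute $\mc{A}^{k} = \mc{A}^{k+1}\n\mc{Y}$ (equation (1) written for $\mc{Y}$), regroup using commutativity as $(\mc{X}\n\mc{A})^{k+1}\n\mc{Y}$, and collapse the idempotent $(\mc{X}\n\mc{A})^{k+1}=\mc{X}\n\mc{A}$ to obtain $\mc{X} = \mc{X}\n\mc{A}\n\mc{Y}$. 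For $\mc{Y}$, I would run the mirrored computation: writing $\mc{Y}=\mc{A}^{k}\n\mc{Y}^{k+1}$ and substituting $\mc{A}^{k}=\mc{A}^{k+1}\n\mc{X}$ (equation (1) written for $\mc{X}$), then using (3) to slide the factor $\mc{X}$ across the power $\mc{A}^{k+1}$ to the far left, and finally collapsing the idempotent $(\mc{A}\n\mc{Y})^{k+1}=\mc{A}\n\mc{Y}$ to reach $\mc{Y}=\mc{X}\n\mc{A}\n\mc{Y}$. Comparing the two identities yields $\mc{X}=\mc{Y}$, which is the assertion; existence was already settled in \cite{Wei18}, so this completes the characterization.

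The step I expect to be the genuine obstacle is producing the \emph{same} central expression $\mc{X}\n\mc{A}\n\mc{Y}$ from both sides. The naive symmetric manipulation only delivers $\mc{X}=\mc{X}\n\mc{A}\n\mc{Y}$ and $\mc{Y}=\mc{Y}\n\mc{A}\n\mc{X}$, and this pair alone is \emph{not} enough to force $\mc{X}=\mc{Y}$: two distinct idempotents $\mc{P},\mc{Q}$ may satisfy $\mc{P}\n\mc{Q}=\mc{P}$ and $\mc{Q}\n\mc{P}=\mc{Q}$, so no contradiction arises from that pair. The resolution is to introduce the factor $\mc{X}$ coming from equation (1) on the \emph{left} of $\mc{Y}$ rather than on the right, which is legitimate precisely because (3) lets $\mc{X}$ commute through $\mc{A}^{k+1}$. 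Beyond this observation the work is routine bookkeeping of the associativity of $\n$ and of the commuting relations; the only care needed is to keep the exponents matched so that each idempotent collapse is valid.
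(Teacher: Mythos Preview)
Your argument is correct and follows essentially the same strategy as the paper: both proofs derive $\mc{X}=\mc{X}^{k+1}\n\mc{A}^{k}$ from (2)--(3), feed in equation (1) for the other candidate, and collapse the resulting powers using commutativity and the idempotence of $\mc{X}\n\mc{A}$. Your route via the single symmetric pivot $\mc{X}\n\mc{A}\n\mc{Y}$ is the standard clean form; the paper reaches the same conclusion through a longer chain $\mc{X}=\mc{A}^{2}\n\mc{X}\n\mc{Y}^{2}=\mc{A}^{3}\n\mc{X}\n\mc{Y}^{3}=\cdots$ before finally identifying $\mc{X}$ with $\mc{Y}$.
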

\begin{proof}
Suppose $\mc{X}$ and $\mc{Y}$ are two Drazin inverses of $\mc{A}$. Now 
\begin{eqnarray}\label{unique1} 
   \mc{X} = \mc{X}\n\mc{A}\n\mc{X}
          = \mc{X}\n\mc{A}\n\mc{X}\n\mc{A}\n\mc{X}
          = \mc{X}\n\mc{X}\n\mc{A}\n\mc{A}\n\mc{X}
          = \mc{X}^{2}\n\mc{A}^{2}\n\mc{X}.  
\end{eqnarray}
Repeating $k$ times the Eq. \eqref{unique1}, we obtain 
\begin{eqnarray*}
\mc{X} &=& \mc{X}^{k+1}\n\mc{A}^{k+1}\n\mc{X}
       = \mc{X}^{k+1}\n\mc{A}^{k}
       = \mc{X}^{k+1}\n\mc{A}^{k}\n\mc{Y}\n\mc{A}\\
       &=& \mc{X}^{k+1}\n\mc{A}^{k}\n\mc{Y}\n\mc{A}\n\mc{Y}\n\mc{A}
       = \mc{X}^{k+1}\n\mc{A}^{K+1}\n\mc{Y}^{2}\n\mc{A}\\
       &=&  \mc{A}^{k+1}\n\mc{X}^{k+1}\n\mc{Y}^{2}\n\mc{A}
       = \mc{A}\n\mc{X}\n\mc{Y}\n\mc{A}\n\mc{Y}\\
       &=& \mc{A}^{2}\n\mc{X}\n\mc{Y}^{2}=\mc{A}^{2}\n\mc{X}\n\mc{Y}\n\mc{Y}.
\end{eqnarray*}
Now substituting $\mc{Y}$ as $\mc{Y}\n\mc{A}\n\mc{Y}$ and repeating it $k-2$ times, one can obtain $\mc{X} = \mc{Y}$.   
\end{proof}
In connection with range space and null space,  Ji and Wei \cite{Wei18} discussed the characterization of the Drazin inverse of tensors, as follows.

\begin{theorem}[Theorem 3.4, \cite{Wei18}]\label{wei1.1}
Let $ \mc{A} \in \mathbb{C}^{\textbf{I}(N) \times \textbf{I}(N)}$ and ind$(\mc{A})=k.$ Then for $l\geq k,$ the following holds
\begin{enumerate}
    \item[(a)] $\mathfrak{R}(\mc{A}^{D}) = \mathfrak{R}(\mc{A}^{k})$ and $\mc{N}(\mc{A}^{D}) = \mc{N}(\mc{A}^{k}),$
    \item[(b)] $\mathfrak{R}(\mc{A}\n\mc{A}^{D}) = \mathfrak{R}(\mc{A}^{k})$, $\mc{N}(\mc{A}\n\mc{A}^{D}) = \mc{N}(\mc{A}^{k})$, $\mathfrak{R}(I-\mc{A}\n\mc{A}^{D}) =\mc{N}(\mc{A}^{k})$ and $\mc{N}(I-\mc{A}\n\mc{A}^{D}) = \mathfrak{R}(\mc{A}^{k})$.
\end{enumerate}
\end{theorem}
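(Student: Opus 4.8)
The plan is to fix $\mc{X}:=\mc{A}^{D}$ and reduce every claim to the three defining equations (1)--(3) of Definition~\ref{dz1.1} together with Lemma~\ref{range-stan}. Since $\text{ind}(\mc{A})=k$, the chains $\mathfrak{R}(\mc{A}^{l})=\mathfrak{R}(\mc{A}^{k})$ and $\mc{N}(\mc{A}^{l})=\mc{N}(\mc{A}^{k})$ hold for every $l\ge k$ (the ranges stabilize at the index and the null spaces then stabilize by finite-dimensionality), so it suffices to prove each assertion with $l=k$. The first preparatory step is to derive two factorizations of $\mc{X}$: iterating equation (2) exactly as in the proof of Theorem~\ref{mr1.1} gives $\mc{X}=\mc{X}^{k+1}\n\mc{A}^{k+1}\n\mc{X}$, and then equation (1) collapses this to $\mc{X}=\mc{X}^{k+1}\n\mc{A}^{k}$; because equation (3) makes $\mc{A}$ and $\mc{X}$ commute (hence $\mc{A}^{k}$ commutes with $\mc{X}^{k+1}$), we also obtain $\mc{X}=\mc{A}^{k}\n\mc{X}^{k+1}$. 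Equation (1) itself reads $\mc{A}^{k}=\mc{A}^{k+1}\n\mc{X}=\mc{X}\n\mc{A}^{k+1}$.

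For part (a) I would read each of these identities through Lemma~\ref{range-stan}. The factorization $\mc{A}^{D}=\mc{A}^{k}\n\mc{X}^{k+1}$ exhibits $\mc{A}^{D}$ in the form $\mc{A}^{k}\n(\cdot)$, giving $\mathfrak{R}(\mc{A}^{D})\subseteq\mathfrak{R}(\mc{A}^{k})$, while $\mc{A}^{k}=\mc{X}\n\mc{A}^{k+1}=\mc{A}^{D}\n\mc{A}^{k+1}$ gives the reverse inclusion; hence $\mathfrak{R}(\mc{A}^{D})=\mathfrak{R}(\mc{A}^{k})$. The null-space equality comes from the same two identities read from the left: $\mc{A}^{D}=\mc{X}^{k+1}\n\mc{A}^{k}$ shows $\mc{N}(\mc{A}^{k})\subseteq\mc{N}(\mc{A}^{D})$, and $\mc{A}^{k}=\mc{A}^{k+1}\n\mc{X}=\mc{A}^{k+1}\n\mc{A}^{D}$ shows $\mc{N}(\mc{A}^{D})\subseteq\mc{N}(\mc{A}^{k})$.

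Part (b) splits into the two statements about $\mc{A}\n\mc{A}^{D}$ and the two about $I-\mc{A}\n\mc{A}^{D}$. For the former I would again manufacture factorizations: using $\mc{A}^{D}=\mc{A}^{k}\n\mc{X}^{k+1}$ and commutativity, $\mc{A}\n\mc{A}^{D}=\mc{A}^{k}\n(\mc{A}\n\mc{X}^{k+1})$ yields $\mathfrak{R}(\mc{A}\n\mc{A}^{D})\subseteq\mathfrak{R}(\mc{A}^{k})$, and the identity $\mc{A}^{k}=(\mc{A}\n\mc{A}^{D})\n\mc{A}^{k}$ (obtained from equation (1) after commuting $\mc{A}^{D}$ past $\mc{A}^{k}$) gives the reverse inclusion. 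The null-space equality follows by reading the factorizations from the left: $\mc{A}\n\mc{A}^{D}=\mc{A}\n\mc{X}^{k+1}\n\mc{A}^{k}$ gives $\mc{N}(\mc{A}^{k})\subseteq\mc{N}(\mc{A}\n\mc{A}^{D})$, and $\mc{A}^{k}=\mc{A}^{k}\n(\mc{A}\n\mc{A}^{D})$ gives the reverse, so $\mc{N}(\mc{A}\n\mc{A}^{D})=\mc{N}(\mc{A}^{k})$.

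Finally, the statements involving $I-\mc{A}\n\mc{A}^{D}$ I would deduce purely formally from the idempotency of $\mc{P}:=\mc{A}\n\mc{A}^{D}$. Equation (2) gives $\mc{P}\n\mc{P}=\mc{A}\n(\mc{A}^{D}\n\mc{A}\n\mc{A}^{D})=\mc{A}\n\mc{A}^{D}=\mc{P}$, so both $\mc{P}$ and $I-\mc{P}$ are idempotent. For any idempotent $\mc{P}$ one has $\mathfrak{R}(\mc{P})=\mc{N}(I-\mc{P})$ (if $\mc{y}=\mc{P}\n\mc{x}$ then $(I-\mc{P})\n\mc{y}=\mc{O}$, and conversely $(I-\mc{P})\n\mc{y}=\mc{O}$ forces $\mc{y}=\mc{P}\n\mc{y}$), and applying this to the idempotent $I-\mc{P}$ gives $\mathfrak{R}(I-\mc{P})=\mc{N}(\mc{P})$. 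Combining these with the two equalities already established for $\mc{P}$ yields $\mc{N}(I-\mc{A}\n\mc{A}^{D})=\mathfrak{R}(\mc{A}^{k})$ and $\mathfrak{R}(I-\mc{A}\n\mc{A}^{D})=\mc{N}(\mc{A}^{k})$. The only genuine obstacle throughout is bookkeeping: arranging each factorization with $\mc{A}^{k}$ (or $\mc{A}\n\mc{A}^{D}$) on the correct side so that Lemma~\ref{range-stan} applies, which repeatedly forces me to commute powers of $\mc{A}$ and $\mc{X}$ past one another via equation (3); since the Einstein product is associative, once the ordering is fixed each inclusion is immediate.
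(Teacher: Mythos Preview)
Your argument is correct and is essentially the standard one: derive $\mc{A}^{D}=\mc{A}^{k}\n(\mc{A}^{D})^{k+1}=(\mc{A}^{D})^{k+1}\n\mc{A}^{k}$ and $\mc{A}^{k}=\mc{A}^{D}\n\mc{A}^{k+1}=\mc{A}^{k+1}\n\mc{A}^{D}$ from the defining equations, read off the range and null-space equalities via Lemma~\ref{range-stan}, and then handle $I-\mc{A}\n\mc{A}^{D}$ by the idempotent trick. There is nothing to compare against, however: in this paper Theorem~\ref{wei1.1} is a quoted preliminary result from \cite{Wei18} and carries no proof here. If you want to check your write-up against a source proof, you need to look at Theorem~3.4 of the cited paper rather than the present one.
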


\begin{theorem}[Theorem 3.2, \cite{Wei18}]\label{weitm3.2}
Let  $\mc{A}\in \mathbb{R}^{\textbf{I}(N)\times \textbf{I}(N)}$. If ind$(\mc{A}) = k$, then $\mathfrak{R}(\mc{A}^l) = \mathfrak{R}(\mc{A}^k)$ and $\mc{N}(\mc{A}^l) = \mc{N}(\mc{A}^k)$ for any positive integer $l\geq k$.
\end{theorem}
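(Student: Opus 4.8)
The plan is to prove the two families of equalities separately: first the range identities by a factorization argument built on Lemma~\ref{range-stan}, and then to transfer range stabilization to the null spaces through a finite-dimensional injectivity argument. I would start from the nested chains valid for every square tensor, namely $\mathfrak{R}(\mc{A}^{m+1})\subseteq\mathfrak{R}(\mc{A}^m)$ and $\mc{N}(\mc{A}^m)\subseteq\mc{N}(\mc{A}^{m+1})$, both immediate from $\mc{A}^{m+1}=\mc{A}\n\mc{A}^m=\mc{A}^m\n\mc{A}$. For the range claim I induct on $l\ge k$ to show $\mathfrak{R}(\mc{A}^{l+1})=\mathfrak{R}(\mc{A}^l)$; the base case $l=k$ is precisely the definition of $\mathrm{ind}(\mc{A})=k$. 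For the step, assuming $\mathfrak{R}(\mc{A}^l)=\mathfrak{R}(\mc{A}^{l+1})$, Lemma~\ref{range-stan} provides $\mc{U}$ with $\mc{A}^l=\mc{A}^{l+1}\n\mc{U}$; multiplying on the left by $\mc{A}$ gives $\mc{A}^{l+1}=\mc{A}^{l+2}\n\mc{U}$, and a second application of Lemma~\ref{range-stan} yields $\mathfrak{R}(\mc{A}^{l+1})\subseteq\mathfrak{R}(\mc{A}^{l+2})$. With the automatic reverse inclusion this closes the induction and gives $\mathfrak{R}(\mc{A}^l)=\mathfrak{R}(\mc{A}^k)$ for all $l\ge k$.

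The bridge to the null spaces is the observation that, once the range has stabilized, left multiplication by $\mc{A}$ restricts to a bijection of the stable range $\mathfrak{R}(\mc{A}^k)$. Indeed this map sends $\mathfrak{R}(\mc{A}^k)$ onto $\mathfrak{R}(\mc{A}^{k+1})=\mathfrak{R}(\mc{A}^k)$, so it is a surjective linear endomorphism of the finite-dimensional space $\mathfrak{R}(\mc{A}^k)\subseteq\mathbb{R}^{\textbf{I}(N)}$ and is therefore injective; consequently $\mc{Y}\in\mathfrak{R}(\mc{A}^k)$ with $\mc{A}\n\mc{Y}=\mc{O}$ forces $\mc{Y}=\mc{O}$. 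Using this, I prove $\mc{N}(\mc{A}^l)=\mc{N}(\mc{A}^k)$ for $l\ge k$ by peeling off one factor at a time: the inclusion $\mc{N}(\mc{A}^k)\subseteq\mc{N}(\mc{A}^l)$ is automatic, and conversely, for $\mc{X}\in\mc{N}(\mc{A}^l)$ and each index $j$ with $k\le j<l$ the tensor $\mc{A}^j\n\mc{X}$ lies in $\mathfrak{R}(\mc{A}^j)=\mathfrak{R}(\mc{A}^k)$, so $\mc{A}\n(\mc{A}^j\n\mc{X})=\mc{O}$ forces $\mc{A}^j\n\mc{X}=\mc{O}$ by injectivity. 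Descending from $\mc{A}^l\n\mc{X}=\mc{O}$ through $j=l-1,\dots,k$ yields $\mc{A}^k\n\mc{X}=\mc{O}$, i.e.\ $\mc{X}\in\mc{N}(\mc{A}^k)$.

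The main obstacle is exactly this transfer from the range statement to the null-space statement: since $\mathrm{ind}(\mc{A})$ is defined only through the range chain, the null-space equalities are not directly available and require a genuinely finite-dimensional input. The surjective-implies-injective argument supplies this while staying inside the tensor formalism and using only Lemma~\ref{range-stan} together with elementary dimension counting. An equivalent route would invoke the reshaping isomorphism of \cite{stan}, under which $\n$ becomes ordinary matrix multiplication and $\dim\mathfrak{R}(\mc{A}^l)$ and $\dim\mc{N}(\mc{A}^l)$ become the rank and nullity of a fixed square-matrix power; constancy of the rank for $l\ge k$ together with rank--nullity then forces the nullity, and hence via the nested inclusion the null space itself, to be constant. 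I would favor the direct argument for its self-containedness.
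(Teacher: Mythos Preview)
The paper does not supply a proof of this statement: it is quoted verbatim as Theorem~3.2 of \cite{Wei18} and used as a black box. Your argument is correct and self-contained. The range induction via Lemma~\ref{range-stan} is the standard one, and your passage to the null-space equalities---observing that $\mc{A}$ restricts to a surjective, hence bijective, endomorphism of the finite-dimensional stable range $\mathfrak{R}(\mc{A}^k)$ and then peeling off one factor of $\mc{A}$ at a time---is a clean way to avoid invoking the reshaping map or an external rank--nullity argument. Either route you sketch (the direct injectivity argument or the reshape-to-matrix detour) would serve; nothing further is needed.
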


We next present the definition of the diagonal tensor which was introduced earlier in \cite{bral,sun}.
\begin{definition} [Diagonal tensor, Definition 3.12,  \cite{bral}] A tensor $\mc{D}
 =(d_{\textbf{i}(n),\textbf{j}(n)})
  \in \mathbb{R}^{\textbf{I}(N) \times \textbf{I}(N)} $ is
   called a {\it diagonal
   tensor} if all its entries are zero except for $d_{\textbf{i}(n), \textbf{i}(n)}.$
\end{definition}

The definition of an {\it upper off-diagonal tensor} and {\it lower off-diagonal tensor } are defined under the influence of Definition 3.12, \cite{sun}, as follows.

\begin{definition} [Upper off-diagonal tensor] A tensor $\mc{U}=(u_{{\textbf{i}(N)},{\textbf{j}(N)}}) \in  \mathbb{R}^{\textbf{I}(N) \times \textbf{I}(N)}$ is called an {\it upper off-diagonal tensor} if all entries below the main diagonal are zero,  i.e.,  $u_{{\textbf{i}(N)},{\textbf{j}(N)}}$ = 0 for $j_k < i_k$, where $k=1,2,\cdots N.$
\end{definition}
\begin{definition} [Lower off-diagonal tensor]
A tensor $\mc{L}= (l_{{\textbf{i}(N)},{\textbf{j}(N)}})\in  \mathbb{R}^{\textbf{I}(N) \times \textbf{I}(N)}$ is called a {\it lower off-diagonal tensor} if all entries above the main diagonal are zero,  i.e.,  $l_{{\textbf{i}(N)},{\textbf{j}(N)}}$ = 0 for $i_k < j_k$, where $k=1,2,\cdots N.$
\end{definition}

Using the notation of diagonal tensor we define diagonal dominant tensor, as follow.
\begin{definition} [Diagonally dominant tensor]
A tensor $\mc{A} = (a_{{\textbf{i}(N)},{\textbf{j}(N)}})\in  \mathbb{R}^{\textbf{I}(N) \times \textbf{I}(N)}$ is called diagonally dominant if 
\begin{equation}
    |a_{{\textbf{i}(N)},{\textbf{i}(N)}}| \geq
        \displaystyle\sum_{\substack{\textbf{j}(N)\neq \textbf{i}(N)\\\textbf{j}(N)}}
              |a_{{\textbf{i}(N)},{\textbf{j}(N)}}|. 
\end{equation}
 \end{definition}

We recall the definition of an eigenvalue of a tensor as below.
\begin{definition}[Eigenvalue of a tensor, Definition 2.3, \cite{liang2019}] \label{defeigen}
Let $\mc{A}\in\mathbb{C}^{\textbf{I}(N)\times\textbf{I}(N)}.$ A complex number $\lambda\in\mathbb{ C}$ is called an eigenvalue of $\mc{A}$ if there exist some nonzero tensor $\mc{X}\in\mathbb{C}^{\textbf{I}(N)}$ such that $\mc{A}\n\mc{X}=\lambda\mc{X}.$
\end{definition}
The nonzero tensor $\mc{X}$ is called eigen vector of $\mc{A}$ and we define the spectral radius $\rho(\mc{A})$ of $\mc{A},$ be the largest absolute value of the eigenvalues of $\mc{A}$.
As a consequence of the definition of eigenvalue, the following lemma easily holds.
\begin{lemma}\label{eigenpower}
Let $\mc{A}\in\mathbb{C}^{\textbf{I}(N)\times\textbf{I}(N)}.$ If $\lambda$ is an eigenvalue of $\mc{A},$ then for $m\in\mathbb{N}$, $\lambda^m$ is an eigenvalue of $\mc{A}^m.$
\end{lemma}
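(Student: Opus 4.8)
The plan is to prove this by straightforward induction on $m$, using the same eigenvector throughout. The key observation is that the Einstein product $\n$ is linear in each argument, so in particular $\mc{A}\n(\lambda\mc{X}) = \lambda(\mc{A}\n\mc{X})$ for any scalar $\lambda\in\C$; this follows directly from the defining summation $(\mc{A}\n\mc{X})_{\textbf{i}(N)} = \sum_{\textbf{k}(N)} a_{\textbf{i}(N),\textbf{k}(N)}x_{\textbf{k}(N)}$, since pulling the scalar out of each entry and out of the sum is immediate.

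First I would fix a nonzero tensor $\mc{X}\in\C^{\textbf{I}(N)}$ with $\mc{A}\n\mc{X}=\lambda\mc{X}$, which exists by the definition of eigenvalue (Definition \ref{defeigen}), and set up the claim $P(m):\ \mc{A}^m\n\mc{X}=\lambda^m\mc{X}$. The base case $m=1$ is exactly the hypothesis. For the inductive step, assuming $P(m-1)$, I would write
\begin{equation*}
\mc{A}^m\n\mc{X} = \mc{A}\n\left(\mc{A}^{m-1}\n\mc{X}\right) = \mc{A}\n\left(\lambda^{m-1}\mc{X}\right) = \lambda^{m-1}\left(\mc{A}\n\mc{X}\right) = \lambda^{m-1}\cdot\lambda\,\mc{X} = \lambda^m\mc{X},
\end{equation*}
where the first equality uses associativity of the Einstein product, the second uses the inductive hypothesis, the third uses the scalar-linearity noted above, and the last uses the original eigenrelation. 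This establishes $P(m)$ for all $m\in\mathbb{N}$.

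Finally, since $\mc{X}$ is the same nonzero tensor at every stage, the relation $\mc{A}^m\n\mc{X}=\lambda^m\mc{X}$ with $\mc{X}\neq\mc{O}$ is precisely the statement that $\lambda^m$ is an eigenvalue of $\mc{A}^m$ (with $\mc{X}$ as a corresponding eigenvector), which completes the proof. There is essentially no genuine obstacle here: the only point that requires a word of justification is the scalar-linearity of $\n$ and its associativity, both of which are elementary consequences of the Einstein-product definition; everything else is the routine bookkeeping of the induction.
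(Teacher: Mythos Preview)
Your proof is correct, and it is precisely the routine induction the paper has in mind: the paper does not actually give a proof of this lemma but simply asserts that it ``easily holds'' as a consequence of the definition of eigenvalue. Your argument supplies exactly that verification, with the only nontrivial ingredients---scalar-linearity and associativity of the Einstein product---correctly identified and justified.
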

Let $\mc{A} = (a_{\textbf{i}(N), \textbf{j}(N)}) \in\mathbb{C}^{\textbf{I}(N)\times \textbf{I}(N)}$. Then  we have $\displaystyle\lim_{k\rightarrow\infty}\mc{A}^k =   \displaystyle\lim_{k\rightarrow\infty} \left[ (\mc{A}^k)_{\textbf{i}(N), \textbf{j}(N)}\right].$ 
In view of this fact, we next present the definition of the convergence of a tensor. 
\begin{definition} [Convergent tensor]  A tensor $\mc{A} \in\mathbb{C}^{\textbf{I}(N)\times\textbf{I}(N)}$ is called convergent tensor if $\mc{A}^k \rightarrow \mc{O}$ as $k \rightarrow \infty$.
\end{definition}

We now introduce the definition of convergence of a power series of tensor, which is a generalization of the power series in matrices \cite{makar}.
\begin{definition} [Tensor series convergent]\label{tenseries}
Let $\mc{A}\in\mathbb{C}^{\textbf{I}(N)\times\textbf{I}(N)}.$ The series $\displaystyle\sum_{k=0}^\infty c_k\mc{A}^{k}$ is convergent if $\displaystyle\sum_{k=0}^\infty c_k(\mc{A}^k)_{\textbf{i}(N),\textbf{j}(N)}$ is convergent for every $\textbf{i}(N)$ and $\textbf{j}(N)$.
\end{definition}

\subsection{Reshape rank and decomposition of a tensor}
The reshape operation systematically rearranges the entries of an arbitrary order-tensor into a matrix \cite{stan}. This operation is denoted by {\it rsh}, and implemented by means of the standard Matlab function {\it reshape}. 
\begin{definition}[Definition 3.1, \cite{stan}]
The 1-1 and onto reshape map, rsh,  is defined as $rsh :  \mathbb{C}^{\textbf{I}(M) \times \textbf{J}(N)} \longrightarrow \mathbb{C}^{\mathfrak{M} \times \mathfrak{N}}$ with
\begin{equation}
    rsh(\mc{A}) = A = reshape(\mc{A},\mathfrak{M}, \mathfrak{N}),
\end{equation}
      where $ \mc{A} \in \mathbb{C}^{\textbf{I}(M) \times \textbf{J}(N)}, $  the matrix $ A \in  \mathbb{C}^{\mathfrak{M} \times \mathfrak{N}},$ $\mathfrak{M}=\prod_{i=1}^M I_i$ and $\mathfrak{N}=\prod_{j=1}^N J_j$. Further, the inverse reshaping is the mapping defined as $rsh^{-1} :  \mathbb{C}^{\mathfrak{M} \times \mathfrak{N}} \longrightarrow  \mathbb{C}^{\textbf{I}(M) \times \textbf{J}(N)}$ with 
\begin{equation}
rsh^{-1}(A) = \mc{A} = reshape(A,I_1,\cdots,I_M,J_1,\cdots,J_N),
 \end{equation}
 where  the matrix $ A \in  \mathbb{C}^{\mathfrak{M} \times \mathfrak{N}}$ and the tensor $\mc{A} \in \mathbb{C}^{\textbf{I}(M) \times \textbf{J}(N)}$. 
\end{definition}

Further, Lemma 3.2 in \cite{stan} defined the rank of a tensor $\mc{A} $, denoted by $ rshrank(\mc{A}) $, as 
\begin{equation}\label{21}
    rshrank(\mc{A}) = rank(rsh(\mc{A})).  
\end{equation}

Adopting the reshaping operation, Behera et al. \cite{behera18} defined the Moore-Penrose inverse of an arbitrary order tensor. Whereas, the authors of \cite{sun} was introduced this Moore-Penrose inverse for even-order tensors, which is recalled next.

\begin{definition}[Definition 2.2, \cite{sun}]\label{defmpi}
Let $\mc{A} \in \mathbb{C}^{\textbf{I}(N) \times
\textbf{I}(N)}$. The tensor $\mc{X} \in
\mathbb{C}^{\textbf{I}(N) \times \textbf{I}(N)} $ satisfying the following four tensor equations:
$$\mc{A}\n\mc{X}\n\mc{A} = \mc{A};~~ \mc{X}\n\mc{A}\n\mc{X} = \mc{X}; ~~
(\mc{A}\n\mc{X})^* = \mc{A}\n\mc{X};~~ (\mc{X}\n\mc{A})^* = \mc{X}\n\mc{A} $$
is called  the \textbf{Moore-Penrose inverse} of $\mc{A}$, and is
denoted by $\mc{A}^{\dg}$. 
In particular, if the tensor $\mc{X}$ satisfies only first equation, then $\mc{X}$ is called $\{1\}$-inverse of $\mc{A}$ and denoted by $\mc{A}^{(1)}.$
\end{definition}
On the other hand, using reshape rank of a tensor $\mc{A}$, Behera et al. \cite{behera18} discussed full rank decomposition of a tensor, as stated below. 

\begin{theorem}\label{49}[Theorem- 2.22 \cite{behera18}]
Let $ \mc{A} \in  \mathbb{C}^{\textbf{I}(N) \times \textbf{I}(N)} $. Then there exist a left invertible tensor 
$ \mc{F} \in \mathbb{C}^{\textbf{I}(N) \times \textbf{H}(R)} $ and a right invertible tensor $ \mc{G} \in \mathbb{C}^{\textbf{H}(R) \times \textbf{I}(N)} $  such that
\begin{equation} \label{37}
    \mc{A} = \mc{F} *_R \mc{G},
\end{equation}
where  $ rshrank(\mc{F}) =rshrank(\mc{G})= rshrank(\mc{A}) = r = \textbf{H}(R)$.  
\end{theorem}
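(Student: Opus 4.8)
The plan is to transfer the statement to the matrix setting through the reshape bijection and then pull the classical full rank decomposition of matrices back to tensors. First I would set $A = rsh(\mc{A}) \in \mathbb{C}^{\mathfrak{M}\times\mathfrak{M}}$ with $\mathfrak{M}=\prod_{i=1}^N I_i$ (the two index blocks of $\mc{A}$ are both $\textbf{I}(N)$, so $A$ is square); by \eqref{21} we have $rank(A) = rshrank(\mc{A}) = r$. Since the reshape map is $1$-$1$ and onto, reconstructing tensors from any matrix factorization of $A$ is always possible, so the whole argument reduces to producing a suitable factorization of the matrix $A$.

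Next I would invoke the standard full rank decomposition of a rank-$r$ matrix: $A = F G$ with $F \in \mathbb{C}^{\mathfrak{M}\times r}$ of full column rank (hence left invertible) and $G \in \mathbb{C}^{r \times \mathfrak{M}}$ of full row rank (hence right invertible). To reshape $F$ and $G$ back into tensors I would fix an index tuple $\textbf{H}(R)$ whose dimensions multiply to $r$; the simplest choice is $R = 1$ and $H_1 = r$, so that $\textbf{H}(R) = r$, and then set $\mc{F} = rsh^{-1}(F) \in \mathbb{C}^{\textbf{I}(N)\times\textbf{H}(R)}$ and $\mc{G} = rsh^{-1}(G) \in \mathbb{C}^{\textbf{H}(R)\times\textbf{I}(N)}$. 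By the definition of the reshape rank, $rshrank(\mc{F}) = rank(F) = r$ and $rshrank(\mc{G}) = rank(G) = r$, which yields the claimed rank equalities and identifies $r$ with $\textbf{H}(R)$.

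The crux of the proof, and the step I expect to be the main obstacle, is to show that the reshape map turns the Einstein product $*_R$ into ordinary matrix multiplication, namely $rsh(\mc{F} *_R \mc{G}) = rsh(\mc{F})\, rsh(\mc{G}) = F G$. This amounts to checking that the contraction over the $\textbf{H}(R)$ indices in the definition of $*_R$ corresponds exactly to summation over the single flattened index of length $r$, which holds once one verifies that $rsh$ employs a consistent linear ordering of the multi-indices on both factors. Granting this multiplicativity, I obtain $rsh(\mc{F} *_R \mc{G}) = F G = A = rsh(\mc{A})$, and since $rsh$ is injective this forces $\mc{F} *_R \mc{G} = \mc{A}$.

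Finally I would translate invertibility back through the same multiplicativity: left invertibility of $\mc{F}$ is equivalent to the existence of a matrix $F'$ with $F' F = I_r$, which is available precisely because $F$ has full column rank; dually, right invertibility of $\mc{G}$ follows from $G$ having full row rank. This completes the construction of the required decomposition $\mc{A} = \mc{F} *_R \mc{G}$.
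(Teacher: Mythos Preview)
Your argument is correct; the paper does not give its own proof of this statement but simply quotes it as Theorem~2.22 of \cite{behera18}. The reshape-based approach you outline---pull back a classical full-rank factorization $A=FG$ along $rsh$, using the multiplicativity $rsh(\mc{F}*_R\mc{G})=rsh(\mc{F})\,rsh(\mc{G})$---is exactly the intended method (this multiplicativity is recorded in \cite{stan}, and is the only nontrivial ingredient).
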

In connection with the Moore-Penrose inverse of tensors, the singular value decomposition (SVD) discussed in Lemma 3.1  \cite{sun} for a complex tensor. However, the authors of
\cite{bral} proved the same result for a real tensor. 
\begin{lemma}{(Lemma 3.1, \cite{sun})}\label{SVDTensor}
 A tensor $\mc{A} \in
\mathbb{C}^{\textbf{I}(N) \times \textbf{J}(N)}$
 can be decomposed  as $$\mc{A} = \mc{U}*_N\mc{B}*_N\mc{V}^*,$$
 where $\mc{U} \in \mathbb{C}^{\textbf{I}(N) \times \textbf{I}(N)}$ and
 $\mc{V} \in \mathbb{C}^{\textbf{J}(N) \times \textbf{J}(N)}$ are unitary
 tensors, and
 $\mc{B} \in \mathbb{C}^{\textbf{I}(N) \times \textbf{J}(N)}$ is a
 tensor such that
 $(\mc{B})_{{\textbf{i}(N)},{\textbf{j}(N)}} =0$, if  $i_k \neq  j_k$, where $k=1,2,\cdots N.$
 \end{lemma}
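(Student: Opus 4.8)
The plan is to transfer the problem to matrices through the reshape map $rsh$ and then invoke the classical matrix singular value decomposition. First I would set $A = rsh(\mc{A}) \in \mathbb{C}^{\mathfrak{M}\times\mathfrak{N}}$, where $\mathfrak{M}=\prod_{i=1}^N I_i$ and $\mathfrak{N}=\prod_{j=1}^N J_j$. The key structural fact I would use, established for the reshape operation in \cite{stan, behera18}, is that $rsh$ is a $*$-algebra isomorphism between tensors under the Einstein product and matrices under the ordinary product; concretely, $rsh(\mc{P}\n\mc{Q}) = rsh(\mc{P})\,rsh(\mc{Q})$ and $rsh(\mc{P}^*) = (rsh(\mc{P}))^*$, and $rsh$ carries the identity tensor $\mc{I}$ to the identity matrix.

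Next I would apply the classical SVD to the matrix $A$, obtaining unitary matrices $U\in\mathbb{C}^{\mathfrak{M}\times\mathfrak{M}}$ and $V\in\mathbb{C}^{\mathfrak{N}\times\mathfrak{N}}$ together with a rectangular diagonal matrix $B\in\mathbb{C}^{\mathfrak{M}\times\mathfrak{N}}$ satisfying $A = U B V^*$. Pulling everything back through $rsh^{-1}$, I would define $\mc{U}=rsh^{-1}(U)$, $\mc{V}=rsh^{-1}(V)$ and $\mc{B}=rsh^{-1}(B)$. Applying the product- and adjoint-compatibility of $rsh$ to the matrix identity $A=U B V^*$ then yields $\mc{A}=\mc{U}\n\mc{B}\n\mc{V}^*$, while the unitarity relations $U U^*=U^* U=I$ transport to $\mc{U}\n\mc{U}^*=\mc{U}^*\n\mc{U}=\mc{I}$, and similarly for $\mc{V}$, so that $\mc{U}$ and $\mc{V}$ are unitary tensors.

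The one point that needs genuine care, and which I expect to be the main obstacle, is verifying that the rectangular diagonal matrix $B$ reshapes into a tensor $\mc{B}$ with exactly the claimed sparsity pattern, namely $(\mc{B})_{\textbf{i}(N),\textbf{j}(N)}=0$ whenever $i_k\neq j_k$ for some $k$. This is purely a matter of index bookkeeping: the reshape fixes a bijection between a linear index $p\in\{1,\dots,\mathfrak{M}\}$ and the multi-index $\textbf{i}(N)$, and similarly $q\leftrightarrow\textbf{j}(N)$, through the standard ordering, and I would check that the matrix-diagonal condition $p=q$ corresponds precisely to the coincidence $\textbf{i}(N)=\textbf{j}(N)$ of the associated multi-indices. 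Once this correspondence is confirmed, the vanishing of the off-diagonal entries of $B$ is equivalent to the stated vanishing of the off-diagonal entries of $\mc{B}$, and the decomposition is complete; the remaining verifications are the routine algebraic manipulations guaranteed by the isomorphism $rsh$.
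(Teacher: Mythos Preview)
The paper does not supply its own proof of this lemma; it is quoted from \cite{sun} (with the real case attributed to \cite{bral}), so there is no in-paper argument to compare against directly.

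Your overall strategy---pull back to matrices via $rsh$, apply the classical SVD, push forward---is the natural one, and the algebraic parts (product, conjugate transpose, unitarity) do transfer cleanly because $rsh$ is a $*$-isomorphism. The gap lies exactly where you flagged it, but it is a real gap rather than a routine check: your assertion that ``the matrix-diagonal condition $p=q$ corresponds precisely to the coincidence $\textbf{i}(N)=\textbf{j}(N)$'' is \emph{false} for the MATLAB-style reshape of \cite{stan} as soon as $I_k\neq J_k$ for some $k$. The row linearization uses radices $I_1,\dots,I_N$ while the column linearization uses $J_1,\dots,J_N$; when these differ, equal linear indices need not decode to equal multi-indices. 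Concretely, take $N=2$, $(I_1,I_2)=(2,3)$, $(J_1,J_2)=(3,2)$: then $\mathfrak{M}=\mathfrak{N}=6$, and the matrix-diagonal entry at $p=q=3$ reshapes to the tensor entry at position $\bigl((1,2),(3,1)\bigr)$, which has $i_1\neq j_1$ and $i_2\neq j_2$. More structurally, the matrix diagonal carries $\min(\mathfrak{M},\mathfrak{N})$ entries while the tensor diagonal carries $\prod_k\min(I_k,J_k)$ entries, and these cardinalities need not agree, so no bijection of the kind you describe can exist in general.

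Thus the promised verification would fail, and your argument does not establish the stated sparsity of $\mc{B}$ in the genuinely rectangular case. Two fixes are available: (i) restrict to the square case $I_k=J_k$, where the two linearizations coincide and your argument goes through verbatim---this is in fact the only case the present paper ever invokes, namely in the proof of Theorem~\ref{spectr}; or (ii) abandon the raw reshape of \cite{stan} and instead use an index bijection tailored so that matrix-diagonal and tensor-diagonal positions correspond, which is how the original sources handle the rectangular setting.
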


\section{Results on Drazin inverse of tensors}

The Drazin inverse of tensors plays nearly the same role as the standard inverse of an invertible tensor. General properties of the Drazin inverse of tensors via Einstein product can be found in \cite{Wei18}. In this section, we further embellish on this theory by producing a few more characterizations of this inverse. We divided this section into two parts. In the first part, we obtain several identities involving the Drazin inverse of tensors. The second part contains the computation of the Drazin inverse of tensors.
\subsection{Some identities}\label{subsec1}
It is worth mentioning that Ji and Wei \cite{Wei18} studied the Drazin inverse of tensors, which motivates us to investigate further on the theory of the Drazin inverse of tensors.  We find some interesting identities. Some of these are used in the next sections. The very first result of this section will numerously use in other consequential identities. 

\begin{lemma}\label{mr11.2}
Let $\mc{A} \in \mathbb{C}^{\textbf{I}(N) \times \textbf{I}(N)}$ be a tensor with $ind(\mc{A}) = k$. Then 
$\mc{A}^{p}\n(\mc{A}^{D})^{p} = \mc{A}^{D}\n\mc{A} = (\mc{A}^{D})^{p}\n\mc{A}^{p} \textnormal{~~~for every positive integer} ~ p$.
\end{lemma}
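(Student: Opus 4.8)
The plan is to prove the chain of equalities
$$\mc{A}^{p}\n(\mc{A}^{D})^{p} = \mc{A}^{D}\n\mc{A} = (\mc{A}^{D})^{p}\n\mc{A}^{p}$$
by induction on $p$, relying throughout on the defining properties of the Drazin inverse in Definition~\ref{dz1.1}, especially the commutativity relation $\mc{A}\n\mc{A}^{D} = \mc{A}^{D}\n\mc{A}$. A key preliminary observation is that since $\mc{A}$ and $\mc{A}^{D}$ commute, every power $\mc{A}^{i}$ commutes with every power $(\mc{A}^{D})^{j}$; this lets me freely rearrange factors in products of the form $\mc{A}^{a}\n(\mc{A}^{D})^{b}$, which is the algebraic engine behind the whole argument.

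First I would verify the base case $p=1$, where all three expressions collapse: $\mc{A}\n\mc{A}^{D} = \mc{A}^{D}\n\mc{A}$ is exactly equation~(3), and the two outer terms equal the middle term trivially. Next I would establish the useful idempotent-type identity $\mc{A}\n\mc{A}^{D}\n\mc{A}\n\mc{A}^{D} = \mc{A}\n\mc{A}^{D}$, which follows by writing $\mc{A}\n\mc{A}^{D}\n\mc{A}\n\mc{A}^{D} = \mc{A}\n(\mc{A}^{D}\n\mc{A}\n\mc{A}^{D}) = \mc{A}\n\mc{A}^{D}$ using equation~(2). For the inductive step I would assume $\mc{A}^{p}\n(\mc{A}^{D})^{p} = \mc{A}^{D}\n\mc{A}$ and compute $\mc{A}^{p+1}\n(\mc{A}^{D})^{p+1}$ by inserting the commutativity to move one $\mc{A}$ and one $\mc{A}^{D}$ together, reducing it to $\mc{A}\n\bigl(\mc{A}^{p}\n(\mc{A}^{D})^{p}\bigr)\n\mc{A}^{D} = \mc{A}\n(\mc{A}^{D}\n\mc{A})\n\mc{A}^{D}$, and then collapsing this via the idempotent identity back to $\mc{A}^{D}\n\mc{A}$.

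The equality $(\mc{A}^{D})^{p}\n\mc{A}^{p} = \mc{A}^{D}\n\mc{A}$ follows by an entirely symmetric induction (or immediately from the first equality together with the fact that $\mc{A}^{p}$ and $(\mc{A}^{D})^{p}$ commute, so $\mc{A}^{p}\n(\mc{A}^{D})^{p} = (\mc{A}^{D})^{p}\n\mc{A}^{p}$). I expect the main obstacle to be purely bookkeeping: justifying each rearrangement of factors by the commutativity of powers rather than sliding factors past one another silently, and making sure the idempotent identity is applied to exactly the right grouped subexpression in the inductive step. Once the commuting-powers lemma is recorded, the rest is a clean telescoping reduction with no genuine difficulty.
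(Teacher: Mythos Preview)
Your proposal is correct and follows essentially the same approach as the paper: the paper writes the reduction from $\mc{A}^{p}\n(\mc{A}^{D})^{p}$ to $\mc{A}^{p-1}\n(\mc{A}^{D})^{p-1}$ as an explicit telescoping chain (swap the middle $\mc{A}\n\mc{A}^{D}$ to $\mc{A}^{D}\n\mc{A}$, then apply $\mc{A}^{D}\n\mc{A}\n\mc{A}^{D}=\mc{A}^{D}$), while you frame the identical step as an induction. One small remark: your regrouping $\mc{A}^{p+1}\n(\mc{A}^{D})^{p+1}=\mc{A}\n\bigl(\mc{A}^{p}\n(\mc{A}^{D})^{p}\bigr)\n\mc{A}^{D}$ needs only associativity, not commutativity, so the ``commuting powers'' preliminary is not actually used there.
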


\begin{proof}
\begin{eqnarray*}
\mc{A}^{p}\n(\mc{A}^{D})^{p}&=&
\underbrace{\mc{A}\n\cdots \n \mc{A}}_{p\rm\ times}\n\underbrace{\mc{A}^{D}\n\cdots \n \mc{A}^{D}}_{p\rm\ times}\\
&=& \underbrace{\mc{A}\n\cdots \n \mc{A}}_{(p-1)\rm\ times}
\n(\mc{A}^{D}\n \mc{A}\n \mc{A}^{D})\n \underbrace{\mc{A}^{D}\n \cdots \n \mc{A}^{D}}_{(p-2)\rm\ times}\\
&=&\underbrace{\mc{A}\n\cdots \n \mc{A}}_{(p-1)\rm\ times}\n\underbrace{\mc{A}^{D}\n\cdots \n \mc{A}^{D}}_{(p-1)\rm\ times}\\
&=& \cdots ~~ \cdots \\  
&=&\mc{A}\n\mc{A}^{D} = \mc{A}^{D}\n\mc{A}.
\end{eqnarray*}
\end{proof}
Using the method as in the proof of the above Lemma \ref{mr11.2}, one can show the next theorem.
\begin{theorem}\label{mr1.10}
Let $\mc{A} \in \mathbb{C}^{\textbf{I}(N) \times \textbf{I}(N)}$ be a tensor with $ind(\mc{A}) = k$. Then the following holds
\begin{enumerate}
\item[(a)] $(\mc{A}^{*})^{D} = (\mc{A}^{D})^{*},$
\item[(b)] If $l>m>0$, then $ (\mc{A}^{D})^{l-m} = \mc{A}^{m}\n(\mc{A}^{D})^{l},$
\item[(c)] If $m>0$ and $(l-m)\geq k$, then $(\mc{A})^{l-m} = \mc{A}^{l}\n(\mc{A}^{D})^{m}.$
\end{enumerate}
\end{theorem}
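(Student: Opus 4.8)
The plan is to prove each of the three identities using the three defining Drazin equations together with the commutativity fact established in Lemma~\ref{mr11.2}, namely $\mc{A}^{p}\n(\mc{A}^{D})^{p} = \mc{A}^{D}\n\mc{A} = (\mc{A}^{D})^{p}\n\mc{A}^{p}$. The key structural observation I would exploit throughout is that $\mc{A}$ and $\mc{A}^{D}$ commute, so all powers of $\mc{A}$ and $\mc{A}^{D}$ commute freely under $\n$, which reduces the manipulations to bookkeeping on exponents.

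For part (a), the approach is to verify that $(\mc{A}^{D})^{*}$ satisfies the three defining equations of Definition~\ref{dz1.1} for the tensor $\mc{A}^{*}$, and then invoke the uniqueness result, Theorem~\ref{mr1.1}. First I would note that $ind(\mc{A}^{*}) = ind(\mc{A}) = k$, which follows because applying the conjugate transpose commutes with taking powers, $(\mc{A}^{*})^{j} = (\mc{A}^{j})^{*}$, and the involution preserves the relevant rank/range stabilization defining the index. Then I would take the conjugate transpose of each of the three equations $\mc{A}^{k+1}\n\mc{A}^{D} = \mc{A}^{k}$, $\mc{A}^{D}\n\mc{A}\n\mc{A}^{D} = \mc{A}^{D}$, and $\mc{A}\n\mc{A}^{D} = \mc{A}^{D}\n\mc{A}$, using the anti-automorphism property $(\mc{X}\n\mc{Y})^{*} = \mc{Y}^{*}\n\mc{X}^{*}$ of the Einstein product. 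This yields exactly the three Drazin equations for $\mc{A}^{*}$ with candidate inverse $(\mc{A}^{D})^{*}$, and uniqueness closes the argument.

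For parts (b) and (c), the plan is a direct computation built on the exponent algebra implied by Lemma~\ref{mr11.2}. For part (b), I would start from the right-hand side $\mc{A}^{m}\n(\mc{A}^{D})^{l}$, split off $m$ factors of $\mc{A}^{D}$ to form $\mc{A}^{m}\n(\mc{A}^{D})^{m}\n(\mc{A}^{D})^{l-m}$ (valid since $l > m$), and apply Lemma~\ref{mr11.2} to replace $\mc{A}^{m}\n(\mc{A}^{D})^{m}$ by $\mc{A}^{D}\n\mc{A}$; then I would absorb the factor $\mc{A}^{D}\n\mc{A}$ into $(\mc{A}^{D})^{l-m}$ using the idempotent-type relation $(\mc{A}^{D}\n\mc{A})\n\mc{A}^{D} = \mc{A}^{D}$ (an immediate consequence of equation~(2) and commutativity) to obtain $(\mc{A}^{D})^{l-m}$. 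For part (c), the hypothesis $l - m \geq k$ together with $m > 0$ lets me use defining equation~(1), $\mc{A}^{k+1}\n\mc{A}^{D} = \mc{A}^{k}$, in its iterated form $\mc{A}^{s+1}\n\mc{A}^{D} = \mc{A}^{s}$ for all $s \geq k$. I would begin from $\mc{A}^{l}\n(\mc{A}^{D})^{m}$, peel off one pair at a time, repeatedly applying this reduction, and the constraint $l - m \geq k$ is precisely what guarantees that every intermediate exponent on $\mc{A}$ stays at least $k$ so the reduction equation~(1) remains applicable at each step, collapsing the product to $\mc{A}^{l-m}$.

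The main obstacle I anticipate is purely the careful tracking of exponents in parts (b) and (c): ensuring the inequalities $l > m > 0$ and $(l - m) \geq k$ are used exactly where needed so that each application of the defining equations is legitimate, and confirming that the iterated version $\mc{A}^{s+1}\n\mc{A}^{D} = \mc{A}^{s}$ holds for $s \geq k$ rather than only for $s = k$. The latter deserves an explicit short justification, since equation~(1) is stated only at the single exponent $k$; I would establish it by writing $\mc{A}^{s} = \mc{A}^{s-k}\n\mc{A}^{k} = \mc{A}^{s-k}\n\mc{A}^{k+1}\n\mc{A}^{D} = \mc{A}^{s+1}\n\mc{A}^{D}$ and using commutativity. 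Once this lemma-level fact is in hand, parts (b) and (c) reduce to routine telescoping, and part (a) is a clean uniqueness argument.
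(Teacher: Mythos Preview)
Your proposal is correct and aligns with the paper's intended approach: the paper does not spell out a proof but simply states that the result follows ``using the method as in the proof of the above Lemma~\ref{mr11.2},'' and your argument does exactly that---invoking Lemma~\ref{mr11.2} and the defining Drazin equations, with uniqueness (Theorem~\ref{mr1.1}) for part~(a). Your explicit justification of the iterated reduction $\mc{A}^{s+1}\n\mc{A}^{D} = \mc{A}^{s}$ for $s\geq k$ in part~(c) is a nice touch that the paper leaves implicit.
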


Recall that a  tensor $~\mc{A} \in \mathbb{C}^{\textbf{I}(N) \times \textbf{I}(N)}$ is called nilpotent if $\mc{A}^k=\mc{O},$ where $\mc{O}\in \mathbb{C}^{\textbf{I}(N) \times \textbf{I}(N)}$ is the  zero tensor. It is trivial that,
the nilpotent tensors are always singular. The next result presents the existence of the Drazin inverse of nilpotent tensors. 
\begin{corollary}\label{mr1.9}
Let  $\mc{A} \in \mathbb{C}^{\textbf{I}(N) \times \textbf{I}(N) }$ be a nilpotent tensor with index $k.$ Then $\mc{A}^{D} = \mc{O}$.
\end{corollary}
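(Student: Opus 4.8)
The plan is to exhibit the zero tensor as a Drazin inverse of $\mc{A}$ and then invoke the uniqueness established in Theorem \ref{mr1.1}. First I would pin down the value of $k$: since $\mc{A}$ is nilpotent with index $k$, we have $\mc{A}^{k}=\mc{O}$ while $\mc{A}^{k-1}\neq\mc{O}$. This forces $\mathfrak{R}(\mc{A}^{k})=\{\mc{O}\}\neq\mathfrak{R}(\mc{A}^{k-1})$ together with $\mathfrak{R}(\mc{A}^{k})=\{\mc{O}\}=\mathfrak{R}(\mc{A}^{k+1})$, so that $ind(\mc{A})=k$ and the Drazin inverse of $\mc{A}$ is well defined by Definition \ref{dz1.1}.

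Next I would simply check that $\mc{X}=\mc{O}$ satisfies the three defining equations of the Drazin inverse. For equation $(1)$ we compute $\mc{A}^{k+1}\n\mc{O}=\mc{O}$, and since $\mc{A}^{k}=\mc{O}$ by nilpotency, this equals $\mc{A}^{k}$, as required. Equations $(2)$ and $(3)$ are immediate: $\mc{O}\n\mc{A}\n\mc{O}=\mc{O}$ and $\mc{A}\n\mc{O}=\mc{O}=\mc{O}\n\mc{A}$. Hence $\mc{O}$ is a Drazin inverse of $\mc{A}$, and by the uniqueness asserted in Theorem \ref{mr1.1} we conclude $\mc{A}^{D}=\mc{O}$.

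There is essentially no hard step here; the only point requiring care is the verification of equation $(1)$, where the nilpotency hypothesis $\mc{A}^{k}=\mc{O}$ is exactly what makes the right-hand side vanish. As an alternative route that sidesteps appealing separately to existence, I could reuse the identity derived in the proof of Theorem \ref{mr1.1}, namely $\mc{X}=\mc{X}^{k+1}\n\mc{A}^{k+1}\n\mc{X}=\mc{X}^{k+1}\n\mc{A}^{k}$ for any Drazin inverse $\mc{X}$; substituting $\mc{A}^{k}=\mc{O}$ directly yields $\mc{A}^{D}=(\mc{A}^{D})^{k+1}\n\mc{O}=\mc{O}$.
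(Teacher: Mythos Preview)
Your proof is correct. The paper states this corollary without proof, but since it is placed immediately after Lemma~\ref{mr11.2} and Theorem~\ref{mr1.10}, the intended one-line argument is presumably: by Lemma~\ref{mr11.2} with $p=k$ we have $\mc{A}^{D}\n\mc{A}=\mc{A}^{k}\n(\mc{A}^{D})^{k}=\mc{O}\n(\mc{A}^{D})^{k}=\mc{O}$, whence $\mc{A}^{D}=\mc{A}^{D}\n\mc{A}\n\mc{A}^{D}=\mc{O}$. Your main route---checking that $\mc{O}$ satisfies the three defining equations and invoking uniqueness---is a slightly different but equally direct verification; your alternative route via $\mc{X}=\mc{X}^{k+1}\n\mc{A}^{k}$ is essentially the same idea as the Lemma~\ref{mr11.2} argument.
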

The power of the  Drazin inverse and the Drazin inverse of power tensors  can be switched without changing the result. Which is discussed in the following theorem.
\begin{theorem}\label{mr1.2}
Let $\mc{A}\in \mathbb{C}^{\textbf{I}(N) \times \textbf{I}(N) }$ and ind$(\mc{A})=k.$ Then for $l\in\mathbb{N},$ the following holds
\begin{enumerate}
\item[(a)] $(\mc{A}^{l})^{D} = (\mc{A}^{D})^{l},$
    \item[(b)] $(\mc{A}^{D})^{\#} = \mc{A}^{2}\n\mc{A}^{D},$
  \item[(c)] $((\mc{A}^{D})^{D})^{D} = \mc{A}^{D}.$
\end{enumerate}
\end{theorem}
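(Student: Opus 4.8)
The plan is to establish each of the three identities by exhibiting an explicit candidate tensor and checking that it satisfies the three defining equations of the relevant Drazin (or group) inverse from Definition \ref{dz1.1}, after which uniqueness (Theorem \ref{mr1.1}) forces the claimed equality. Every computation will rest on three facts: the commutativity $\mc{A}\n\mc{A}^{D}=\mc{A}^{D}\n\mc{A}$, the reflexive identity $\mc{A}^{D}\n\mc{A}\n\mc{A}^{D}=\mc{A}^{D}$, and above all Lemma \ref{mr11.2}, which collapses each mixed product $\mc{A}^{p}\n(\mc{A}^{D})^{p}$ or $(\mc{A}^{D})^{p}\n\mc{A}^{p}$ to the single idempotent $\mc{A}\n\mc{A}^{D}$.

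For part (a), set $\mc{B}=\mc{A}^{l}$ and take $\mc{Y}=(\mc{A}^{D})^{l}$ as the candidate for $\mc{B}^{D}$. Equation (3), $\mc{B}\n\mc{Y}=\mc{Y}\n\mc{B}$, reduces at once via Lemma \ref{mr11.2}, both sides becoming $\mc{A}\n\mc{A}^{D}$; equation (2), $\mc{Y}\n\mc{B}\n\mc{Y}=\mc{Y}$, collapses through the same lemma followed by $\mc{A}^{D}\n\mc{A}\n\mc{A}^{D}=\mc{A}^{D}$. The delicate step is equation (1): I must show $\mc{B}^{m+1}\n\mc{Y}=\mc{B}^{m}$ where $m=\mathrm{ind}(\mc{B})$. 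Rewriting the left side as $\mc{A}^{l(m+1)}\n(\mc{A}^{D})^{l}$ and invoking Theorem \ref{mr1.10}(c) with exponents $s=l(m+1)$ and $t=l$ gives $\mc{A}^{lm}=\mc{B}^{m}$, but only under the hypothesis $s-t=lm\geq k$. The main obstacle is precisely this index bookkeeping: I will argue $lm\geq k$ by noting that the ranges $\mathfrak{R}(\mc{A}^{j})$ strictly decrease for $j<k$ and are stationary for $j\geq k$ (Theorem \ref{weitm3.2}), so the defining equality $\mathfrak{R}(\mc{A}^{lm})=\mathfrak{R}(\mc{A}^{l(m+1)})$ for $m=\mathrm{ind}(\mc{A}^{l})$ forces $\mathfrak{R}(\mc{A}^{lm})=\mathfrak{R}(\mc{A}^{lm+1})$ and hence $lm\geq k$. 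Uniqueness then yields $(\mc{A}^{l})^{D}=(\mc{A}^{D})^{l}$.

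For part (b), I take $\mc{X}=\mc{A}^{2}\n\mc{A}^{D}$ as the candidate group inverse of $\mc{A}^{D}$ and verify the index-$1$ equations of Definition \ref{dz1.1}, namely $(\mc{A}^{D})^{2}\n\mc{X}=\mc{A}^{D}$, $\mc{X}\n\mc{A}^{D}\n\mc{X}=\mc{X}$, and $\mc{A}^{D}\n\mc{X}=\mc{X}\n\mc{A}^{D}$. Each is a one-line computation: Lemma \ref{mr11.2} turns $(\mc{A}^{D})^{2}\n\mc{A}^{2}$ into $\mc{A}^{D}\n\mc{A}$, after which $\mc{A}^{D}\n\mc{A}\n\mc{A}^{D}=\mc{A}^{D}$ finishes (1); commutativity gives (3) with both sides equal to $\mc{A}\n\mc{A}^{D}$; and for (2) one first notes $\mc{A}^{D}\n\mc{X}=\mc{A}\n\mc{A}^{D}$, so $\mc{X}\n\mc{A}^{D}\n\mc{X}=\mc{A}^{2}\n(\mc{A}^{D}\n\mc{A}\n\mc{A}^{D})=\mc{A}^{2}\n\mc{A}^{D}=\mc{X}$. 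Since a tensor solving these three equations exists, $\mc{A}^{D}$ has index at most $1$, and by uniqueness $(\mc{A}^{D})^{\#}=\mc{A}^{2}\n\mc{A}^{D}$.

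For part (c), I will combine part (b) with the reflexivity of the group inverse. Because $\mathrm{ind}(\mc{A}^{D})\leq 1$, the Drazin inverse of $\mc{A}^{D}$ coincides with its group inverse, so $(\mc{A}^{D})^{D}=(\mc{A}^{D})^{\#}=\mc{A}^{2}\n\mc{A}^{D}=:\mc{C}$ by (b). Writing $\mc{B}=\mc{A}^{D}$ and $\mc{C}=\mc{B}^{\#}$, the three group-inverse relations between $\mc{B}$ and $\mc{C}$, that is $\mc{B}^{2}\n\mc{C}=\mc{B}$, $\mc{C}\n\mc{B}\n\mc{C}=\mc{C}$, and $\mc{B}\n\mc{C}=\mc{C}\n\mc{B}$, are symmetric enough that short manipulations using only commutativity show $\mc{B}$ to be the group inverse of $\mc{C}$: one checks $\mc{C}^{2}\n\mc{B}=\mc{C}\n\mc{B}\n\mc{C}=\mc{C}$, $\mc{B}\n\mc{C}\n\mc{B}=\mc{B}^{2}\n\mc{C}=\mc{B}$, and $\mc{C}\n\mc{B}=\mc{B}\n\mc{C}$. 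This simultaneously shows $\mathrm{ind}(\mc{C})\leq 1$, so $\mc{C}^{D}=\mc{C}^{\#}=\mc{B}=\mc{A}^{D}$, i.e. $((\mc{A}^{D})^{D})^{D}=\mc{A}^{D}$. The only recurring subtlety across (b) and (c) is justifying that $\mc{A}^{D}$ and then $\mc{C}$ have index at most $1$, so that reading their Drazin inverse as a group inverse is legitimate; this is handled automatically by the explicit constructions above.
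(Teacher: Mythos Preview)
Your proposal is correct; each part goes through as you describe. The overall strategy---exhibit a candidate and verify the three equations of Definition~\ref{dz1.1}---matches the paper's, but two of your executions differ in interesting ways.

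For (a), the paper does not bother computing $m=\mathrm{ind}(\mc{A}^{l})$ at all; it simply checks equation~(1) with exponent $k=\mathrm{ind}(\mc{A})$, i.e.\ it verifies $(\mc{A}^{l})^{k+1}\n(\mc{A}^{D})^{l}=(\mc{A}^{l})^{k}$ directly from $\mc{A}^{k+1}\n\mc{A}^{D}=\mc{A}^{k}$. This tacitly uses that $\mathrm{ind}(\mc{A}^{l})\leq k$, so that verifying equation~(1) at any exponent $\geq\mathrm{ind}(\mc{A}^{l})$ suffices. Your route---working with the true index $m$ of $\mc{A}^{l}$ and proving $lm\geq k$ via the range-chain argument---is more explicit about this point and so arguably cleaner logically, but at the cost of an extra paragraph of index bookkeeping that the paper sidesteps.

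For (c), the paper proceeds by brute force: it sets $\mc{Y}=\mc{A}^{D}$ and verifies the three Drazin equations for $(\mc{A}^{D})^{D}$ by long chains of manipulations. Your argument is more conceptual and shorter: having shown in (b) that $\mc{A}^{D}$ has index at most~$1$ with $(\mc{A}^{D})^{\#}=\mc{A}^{2}\n\mc{A}^{D}$, you observe that the group-inverse relations are symmetric in the pair $(\mc{B},\mc{C})=(\mc{A}^{D},\mc{A}^{2}\n\mc{A}^{D})$, so $\mc{C}^{\#}=\mc{B}$ automatically. This is the standard ``$(\mc{B}^{\#})^{\#}=\mc{B}$'' fact and buys you (c) almost for free once (b) is done. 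Part~(b) is handled essentially the same way in both.
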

\begin{proof}
$(a)$ Let the tensor $\mc{X} = (\mc{A}^{D})^{l}$. It is enough to show $\mc{X}$ is the Drazin inverse of $\mc{A}^l.$ Now 
\begin{eqnarray}\label{351}
(\mc{A}^{l})^{k+1}\n\mc{X} &=& (\mc{A}^{l})^{k}\n\mc{A}^{l}\n(\mc{A}^{D})^{l} = (\mc{A}^{l})^{k}\n\mc{A}\n\mc{A}^{D}\\\nonumber
&=& (\mc{A}^{k})^{l-1}\n\mc{A}^{k}\n\mc{A}\n\mc{A}^{D} = (\mc{A}^{k})^{l-1}\n\mc{A}^{k} = (\mc{A}^{k})^{l} = (\mc{A}^{l})^{k},
\end{eqnarray}
further, 
\begin{eqnarray}\label{352}
\mc{X}\n\mc{A}^{l}\n\mc{X} &=& (\mc{A}^{D})^{l}\n\mc{A}^{l}\n(\mc{A}^{D})^{l} = (\mc{A}^{D})^{l}\n\mc{A}\n\mc{A}^{D}\\\nonumber
&=& (\mc{A}^{D})^{l-1}\n\mc{A}^{D}\n\mc{A}\n\mc{A}^{D} = (\mc{A}^{D})^{l-1}\n\mc{A}^{D} = \mc{X}
\end{eqnarray}
and 
\begin{eqnarray}\label{353}
(\mc{A}^{D})^{l}\n\mc{A}^{l} = \mc{A}^{D}\n\mc{A} = \mc{A}\n\mc{A}^{D} = \mc{A}^{l}\n(\mc{A}^{D})^{l}.
\end{eqnarray}
From \eqref{351}, \eqref{352} and \eqref{353}, we conclude $\mc{X}$ is the Drazin inverse of $\mc{A}^l.$ Hence $(\mc{A}^{l})^{D} = (\mc{A}^{D})^{l}.$\\
$(b)$ Let  $\mc{X} = \mc{A}^{2}\n\mc{A}^{D}$. By Definition \ref{dz1.1}, we have
\begin{eqnarray*}
\mc{A}^{D}\n\mc{X}\n\mc{A}^{D} = \mc{A}^{D}\n\mc{A}^{2}\n\mc{A}^{D}\n\mc{A}^{D} = \mc{A}^{D}\n\mc{A}\n\mc{A}^{D} = \mc{A}^{D},
\end{eqnarray*}
further, 
\begin{eqnarray*}
\mc{X}\n\mc{A}^{D}\n\mc{X} &=& \mc{A}^{2}\n\mc{A}^{D}\n\mc{A}^{D}\n\mc{A}^{2}\n\mc{A}^{D}=\mc{A}\n\mc{A}^{D}\n\mc{A}\n\mc{A}^{D}\n\mc{A}\n\mc{A}^{D}\n\mc{A}\\
&=& \mc{A}\n\mc{A}^{D}\n\mc{A} = \mc{A}^{2}\n\mc{A}^{D} = \mc{X},
\end{eqnarray*}
and 
\begin{eqnarray*}
\mc{A}^{D}\n\mc{X} = \mc{A}^{D}\n\mc{A}^{2}\n\mc{A}^{D} = \mc{A}\n\mc{A}^{D}\n\mc{A}\n\mc{A}^{D} = \mc{A}^{2}\n\mc{A}^{D}\n\mc{A}^{D} = \mc{X}\n\mc{A}^{D}. 
\end{eqnarray*}
Thus $(\mc{A}^{D})^{\#} = \mc{A}^{2}\n\mc{A}^{D}$.\\
$(c)$ Consider $\mc{Y} = \mc{A}^{D}$. Now applying the Definition of the Drazin inverse, we obtain 
\begin{eqnarray*}
((\mc{A}^{D})^{D})^{k+1}\n\mc{Y} &=& ((\mc{A}^{D})^{D})^{k}\n(\mc{A}^{D})^{D}\n\mc{A}^{D}=((\mc{A}^{D})^{D})^{k-1}\n(\mc{A}^{D})^{D}\n\mc{A}^{D}\n(\mc{A}^{D})^{D}\\ 
&=& ((\mc{A}^{D})^{D})^{k-1}\n(\mc{A}^{D})^{D} =  ((\mc{A}^{D})^{D})^{k},    \\
\textnormal{and ~~~}\mc{Y}\n(\mc{A}^{D})^{D}\n\mc{Y} &=& \mc{A}^{D}\n(\mc{A}^{D})^{D}\n\mc{A}^{D}=\mc{A}^{D}\n\mc{A}^{D}\n\mc{A}\n\mc{A}^{D}\n(\mc{A}^{D})^{D}\\
&=& \mc{A}\n(\mc{A}^{D})^{3}\n(\mc{A}^{D})^{D}= \cdots= \mc{A}^{k-1}\n(\mc{A}^{D})^{k+1}\n(\mc{A}^{D})^{D}\\
&=& \mc{A}^{k-1}\n(\mc{A}^{D})^{k}=\mc{A}^{k-1}\n(\mc{A}^{D})^{k-1}\n\mc{A}^{D}\\
&=& \mc{A}\n\mc{A}^{D}\n\mc{A}^{D} = \mc{Y}.
\end{eqnarray*}
Further, we have $(\mc{A}^{D})^{D}\n\mc{A}^{D}=\mc{A}^{D}\n(\mc{A}^{D})^{D}$. Hence $\mc{Y}$ is the Drazin inverse of $(\mc{A}^{D})^{D}$.
\end{proof}

\begin{corollary}\label{mr1.8}
Let $\mc{A} \in \mathbb{C}^{\textbf{I}(N) \times \textbf{I}(N)}$ be a tensor with $ind(\mc{A}) = k$. Then the following holds
\begin{enumerate}
\item[(a)]  $(\mc{A}^{l})^{\#} = (\mc{A}^{D})^{l}$ for $l\geq k,$
\item[(b)] $\mc{A}^{D}\n(\mc{A}^{D})^{\#} = \mc{A}\n\mc{A}^{D}$.
\end{enumerate}
\end{corollary}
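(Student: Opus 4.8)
The plan is to reduce both parts to identities already in hand, namely Theorem \ref{mr1.2} together with the range-stabilization result of Theorem \ref{weitm3.2} and the defining equations of Definition \ref{dz1.1}. Neither part requires building a new tensor from scratch; each is a matter of recognizing the right existing object and simplifying.

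For part (a), the first thing I would do is confirm that the group inverse of $\mc{A}^l$ is even meaningful, i.e. that $ind(\mc{A}^l)\leq 1$ for $l\geq k$. Since $ind(\mc{A})=k$ and both $l\geq k$ and $2l\geq k$, Theorem \ref{weitm3.2} gives $\mathfrak{R}((\mc{A}^l)^2)=\mathfrak{R}(\mc{A}^{2l})=\mathfrak{R}(\mc{A}^k)=\mathfrak{R}(\mc{A}^l)=\mathfrak{R}((\mc{A}^l)^1)$, so the index of $\mc{A}^l$ is at most one. By Definition \ref{dz1.1}, in the index-one case (and trivially in the invertible index-zero case) the group inverse coincides with the Drazin inverse, so $(\mc{A}^l)^\#=(\mc{A}^l)^D$. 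Combining this with Theorem \ref{mr1.2}(a), which already states $(\mc{A}^l)^D=(\mc{A}^D)^l$, immediately yields $(\mc{A}^l)^\#=(\mc{A}^D)^l$.

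For part (b), I would substitute the explicit formula $(\mc{A}^D)^\#=\mc{A}^2\n\mc{A}^D$ furnished by Theorem \ref{mr1.2}(b), so that $\mc{A}^D\n(\mc{A}^D)^\#=\mc{A}^D\n\mc{A}^2\n\mc{A}^D=\mc{A}^D\n\mc{A}\n\mc{A}\n\mc{A}^D$. Applying the commutativity $\mc{A}^D\n\mc{A}=\mc{A}\n\mc{A}^D$ (equation (3) of Definition \ref{dz1.1}, also recorded in Lemma \ref{mr11.2}) to the leftmost pair and then the identity $\mc{A}^D\n\mc{A}\n\mc{A}^D=\mc{A}^D$ (equation (2) of Definition \ref{dz1.1}) collapses the chain to $\mc{A}\n\mc{A}^D$, which is the asserted equality. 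I expect the only real subtlety to lie in part (a): one must not take the existence of $(\mc{A}^l)^\#$ for granted but must verify $ind(\mc{A}^l)\leq 1$ via Theorem \ref{weitm3.2}, after which everything else is bookkeeping.
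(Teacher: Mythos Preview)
Your proposal is correct and follows precisely the route the paper intends: the corollary is stated without proof immediately after Theorem~\ref{mr1.2}, and both parts are meant to be read off from Theorem~\ref{mr1.2}(a) and~(b) exactly as you do. Your explicit check that $ind(\mc{A}^l)\leq 1$ via Theorem~\ref{weitm3.2} is a point the paper leaves implicit but which is indeed needed to pass from $(\mc{A}^l)^D$ to $(\mc{A}^l)^\#$.
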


\begin{remark}\label{dardar}
Like the well-known property of the Moore Penrose inverse, i.e.,  $(\mc{A}^\dg)^\dg = \mc{A}$ (see Proposition 3.3, \cite{sun}), it is worth pointing out that the Drazin inverse is not following the property, i.e.,  $(\mc{A}^{D})^{D} \neq \mc{A}$, as shown below with an example.
\end{remark}

\begin{example}\label{ex1.1}
Consider a tensor
$~\mc{A}=(a_{ijkl})
 \in \mathbb{R}^{\overline{2\times3}\times\overline{2\times3}}$ with entries 
\begin{eqnarray*}
a_{ij11} =
    \begin{pmatrix}
    0 & 1 & 1 \\
    1 & 1 &  1
    \end{pmatrix},~
a_{ij12} =
    \begin{pmatrix}
     0 & 0 & 1\\
     0 & 1 & 1
    \end{pmatrix},~
a_{ij13} =
    \begin{pmatrix}
     0 & 0 & 0\\
     0 & 0 & 1
\end{pmatrix},\\
a_{ij21} =
    \begin{pmatrix}
     0 & 1 & 1\\
     0 & 1 & 1
    \end{pmatrix},~
    a_{ij22} =
    \begin{pmatrix}
    0 & 0 & 1 \\
    0 & 0 &  1
    \end{pmatrix},~
a_{ij23} =
    \begin{pmatrix}
     0 & 0 & 0\\
     0 & 0 & 0
    \end{pmatrix}.
\end{eqnarray*}
Then one can easily verify  $(\mc{A}^{D})^{D} \neq \mc{A}$, however
$\mc{A}^D=\mc{O} = (\mc{A}^D)^D =((\mc{A}^{D})^{D})^D\in \mathbb{R}^{\overline{2\times3}\times\overline{2\times3}}$.
 \end{example}

At this point one may be interested to know, when does the above equality property (Remark  \ref{dardar}) hold $?$ The answer to this query is discussed in the next theorem.

\begin{theorem}\label{mrr1.1}
Let $\mc{A}\in \mathbb{C}^{\textbf{I}(N) \times \textbf{I}(N) }$. Then $(\mc{A}^{D})^{D} = \mc{A}$ if and only if ind$(\mc{A})=1.$
\end{theorem}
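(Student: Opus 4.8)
The plan is to collapse the double Drazin inverse $(\mc{A}^{D})^{D}$ to a single explicit tensor and then extract the index condition from an elementary range inclusion. The pivotal observation is that $\mc{A}^{D}$ always possesses a group inverse: Theorem \ref{mr1.2}(b) already gives $(\mc{A}^{D})^{\#} = \mc{A}^{2}\n\mc{A}^{D}$, so $\textnormal{ind}(\mc{A}^{D}) \leq 1$ and the Drazin and group inverses of $\mc{A}^{D}$ coincide. Hence, for \emph{every} $\mc{A}$,
\[
(\mc{A}^{D})^{D} = (\mc{A}^{D})^{\#} = \mc{A}^{2}\n\mc{A}^{D}.
\]
This converts the claimed equivalence into the far more tractable statement $\mc{A}^{2}\n\mc{A}^{D} = \mc{A} \iff \textnormal{ind}(\mc{A}) = 1$, and the whole proof reduces to analysing this single identity.

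For the forward direction I would assume $(\mc{A}^{D})^{D} = \mc{A}$, which by the displayed identity means $\mc{A} = \mc{A}^{2}\n\mc{A}^{D}$. Feeding this into Lemma \ref{range-stan} with $\mc{U} = \mc{A}^{D}$ yields $\mathfrak{R}(\mc{A}) \subseteq \mathfrak{R}(\mc{A}^{2})$; since the reverse inclusion $\mathfrak{R}(\mc{A}^{2}) \subseteq \mathfrak{R}(\mc{A})$ is automatic, we get $\mathfrak{R}(\mc{A}) = \mathfrak{R}(\mc{A}^{2})$, and by the definition of the index as the least $k$ with $\mathfrak{R}(\mc{A}^{k}) = \mathfrak{R}(\mc{A}^{k+1})$ this forces $\textnormal{ind}(\mc{A}) \leq 1$. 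For the converse, when $\textnormal{ind}(\mc{A}) = 1$, equation $(1)$ of Definition \ref{dz1.1} with $k=1$ reads precisely $\mc{A}^{2}\n\mc{A}^{D} = \mc{A}$; combining this with $(\mc{A}^{D})^{D} = \mc{A}^{2}\n\mc{A}^{D}$ gives $(\mc{A}^{D})^{D} = \mc{A}$ at once.

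I expect the main obstacle to be precisely the first step: recognising that $(\mc{A}^{D})^{D}$ collapses to $\mc{A}^{2}\n\mc{A}^{D}$ by exploiting that $\mc{A}^{D}$ has index at most $1$ (via Theorem \ref{mr1.2}(b)); once this is in hand the two directions are short range-bookkeeping. One delicate point to flag is the invertible case: the forward argument only delivers $\textnormal{ind}(\mc{A}) \leq 1$, and an invertible $\mc{A}$ (index $0$, with $\mc{A}^{D} = \mc{A}^{-1}$) also satisfies $(\mc{A}^{D})^{D} = \mc{A}$. Thus the equivalence is most accurately stated with $\textnormal{ind}(\mc{A}) \leq 1$; to obtain the sharp ``$=1$'' as written one simply restricts attention to singular $\mc{A}$, which is the tacit setting of the statement.
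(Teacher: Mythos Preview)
Your proof is correct. The ``only if'' direction is identical to the paper's: both obtain $\mc{A} = \mc{A}^{2}\n\mc{A}^{D}$, apply Lemma~\ref{range-stan} to get $\mathfrak{R}(\mc{A})\subseteq\mathfrak{R}(\mc{A}^{2})$, and conclude that the index is (at most) $1$. The ``if'' direction differs in packaging: the paper verifies directly that $\mc{A}$ satisfies the three Drazin-inverse axioms for $\mc{A}^{D}$, whereas you first establish the universal identity $(\mc{A}^{D})^{D} = (\mc{A}^{D})^{\#} = \mc{A}^{2}\n\mc{A}^{D}$ via Theorem~\ref{mr1.2}(b) and then simply quote equation~(1) of Definition~\ref{dz1.1} at $k=1$. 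Your route is slightly more economical, since the single formula $(\mc{A}^{D})^{D}=\mc{A}^{2}\n\mc{A}^{D}$ serves both directions and makes transparent that the whole statement reduces to the identity $\mc{A}^{2}\n\mc{A}^{D}=\mc{A}$. Your remark that the forward direction really yields $\textnormal{ind}(\mc{A})\leq 1$ (the invertible case also satisfies the hypothesis) is apt; the paper does not comment on this.
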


\begin{proof}
Let $\mc{A}$ be of index $1$. Now $(\mc{A}^{D})^{k+1}\n\mc{A} =  (\mc{A}^{D})^{k-1}\n\mc{A}^{D}\n\mc{A}\n\mc{A}^{D} = (\mc{A}^{D})^{k}$. Since $\mc{A}$ is of index $1$ so $\mc{A}\n\mc{A}^{D}\n\mc{A} = \mc{A}\n\mc{A}^{\#}\n\mc{A} = \mc{A}$ and $\mc{A}\n\mc{A}^{D} = \mc{A}^{D}\n\mc{A}$. Conversely, let
$(\mc{A}^{D})^{D} = \mc{A}$. By definition of the Drazin inverse, $\mc{A}\n\mc{A}^{D}\n\mc{A} = \mc{A}$. This implies $\mc{A} = \mc{A}^{2}\n\mc{A}^{D}$. So by Lemma \ref{range-stan}, $\mathfrak{R}(\mc{A})\subseteq \mathfrak{R}(\mc{A}^2).$ It is obvious that $\mathfrak{R}(\mc{A}^2)\subseteq \mathfrak{R}(\mc{A}). $ Thus $\mathfrak{R}(\mc{A}^2) =\mathfrak{R}(\mc{A})$. Hence $\mc{A}$ is of index $1$. 
\end{proof}
Thus, the special case of the Drazin inverse (index $k$ = 1) gives necessary and sufficient conditions for the equality.

\begin{remark}\label{reverse}
The Theorem \ref{mr1.2} (a) is not true if we use two different tensor $\mc{A}$ and $\mc{B}$, i.e.,  $(\mc{A}\n\mc{B})^{D} \neq \mc{A}^{D}\n\mc{B}^{D}$, where $\mc{A},~ \mc{B} \in \mathbb{C}^{\textbf{I}(N) \times \textbf{I}(N) }$ and $\mc{A} \neq \mc{B}$. 
\end{remark}
\begin{example}\label{ex1.2}
Consider the tensor $~\mc{A}=(a_{ijkl})
 \in \mathbb{R}^{\overline{2\times3}\times\overline{2\times3}}$ defined in Example \eqref{ex1.1} and  a tensor $~\mc{B}=(b_{ijkl})
 \in \mathbb{R}^{\overline{2\times3}\times\overline{2\times3}}$ with entries
\begin{eqnarray*}
b_{ij11} =
    \begin{pmatrix}
    0 & 0 & 0 \\
    0 & 0 &  0
    \end{pmatrix},~
b_{ij12} =
    \begin{pmatrix}
     1 & 0 & 0\\
     1 & 0 & 0
    \end{pmatrix},~
b_{ij13} =
    \begin{pmatrix}
     1 & 1 & 0\\
     1 & 1 & 0
\end{pmatrix},\\
b_{ij21} =
    \begin{pmatrix}
     1 & 0 & 0\\
     0 & 0 & 0
    \end{pmatrix}, ~
b_{ij22} =
    \begin{pmatrix}
    1 & 1 & 0 \\
    1 & 0 &  0
    \end{pmatrix},~
b_{ij23} =
    \begin{pmatrix}
     1 & 1 & 1\\
     1 & 1 & 0
    \end{pmatrix}.
\end{eqnarray*}
Then $\mc{A}^D = \mc{B}^D  = \mc{O} = \mc{A}^{D}\n\mc{B}^{D} \in \mathbb{R}^{\overline{2\times3}\times\overline{2\times3}}$ and $(\mc{A}*_N\mc{B})^D =(x_{ijkl})
 \in \mathbb{R}^{\overline{2\times3}\times\overline{2\times3}}$ with entries
\begin{eqnarray*}
x_{ij11} =
    \begin{pmatrix}
    0 & 0 & 0 \\
    0 & 0 &  0
    \end{pmatrix},~
x_{ij12} =
    \begin{pmatrix}
     1 & 2 & 0\\
     -1 & -1 & 0
    \end{pmatrix},~
x_{ij13} =
    \begin{pmatrix}
     0 & 0 & 2\\
     0 & -1 & -1
\end{pmatrix},\\
x_{ij21} =
    \begin{pmatrix}
     0 & -1 & 0\\
     2 & 0 & 0
    \end{pmatrix},~
x_{ij22} =
    \begin{pmatrix}
    0 & -1 & -1 \\
    0 & 2 &  0
    \end{pmatrix},~
x_{ij23} =
    \begin{pmatrix}
     0 & 0 & -1\\
     0 & 0 & 1
    \end{pmatrix}.
\end{eqnarray*}

Hence $(\mc{A}\n\mc{B})^{D} \neq \mc{A}^{D}\n\mc{B}^{D}$.   
 \end{example}

The above remark (\ref{reverse}) can be stated as reverse order law for the Drazin inverse of tensors, which is a fundamental in the theory of generalized inverses of tensors. Recently, there has been increasing interest in studying reverse order law of tensors based on different generalized inverses \cite{behera18, Mispa18,  panigrahy}.  In this regard, we discuss one sufficient condition of the reverse order law for the Drazin inverse of tensors, as follows. 

\begin{theorem}\label{mr1.5}
Suppose $\mc{A}, \mc{B}\in \mathbb{C}^{\textbf{I}(N) \times \textbf{I}(N) }$ are each of index $k$. If $\mc{A}\n\mc{B} = \mc{B}\n\mc{A}$, then the followings are true
\begin{enumerate}
    \item[(a)] $\mc{A}^{D}\n\mc{B} = \mc{B}\n\mc{A}^{D}$ and $\mc{A}\n\mc{B}^{D} = \mc{B}^{D}\n\mc{A},$
    \item[(b)] $(\mc{A}\n\mc{B})^{D} = \mc{B}^{D}\n\mc{A}^{D} = \mc{A}^{D}\n\mc{B}^{D}$.
\end{enumerate}
\end{theorem}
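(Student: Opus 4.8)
The plan is to reduce the whole theorem to a single \emph{commutation principle}: if a tensor commutes with $\mc{A}$, then it also commutes with $\mc{A}^{D}$, using nothing about the second tensor except that it commutes with $\mc{A}$ (in particular, only $ind(\mc{A})=k$ is used). Granting this, part (a) is the principle applied to the pair $(\mc{A},\mc{B})$ and, symmetrically, to $(\mc{B},\mc{A})$; iterating it then forces the four tensors $\mc{A},\mc{B},\mc{A}^{D},\mc{B}^{D}$ to commute pairwise, which trivializes the algebra in part (b).

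To establish the principle --- that $\mc{C}\n\mc{A}=\mc{A}\n\mc{C}$ implies $\mc{C}\n\mc{A}^{D}=\mc{A}^{D}\n\mc{C}$ --- I would first note $\mc{C}\n\mc{A}^{m}=\mc{A}^{m}\n\mc{C}$ for all $m$ by induction. I then invoke the two iterated representations $\mc{A}^{D}=(\mc{A}^{D})^{k+1}\n\mc{A}^{k}=\mc{A}^{k}\n(\mc{A}^{D})^{k+1}$ (from Theorem~\ref{mr1.10}(b) together with the commutativity of $\mc{A}$ and $\mc{A}^{D}$), the collapse identity $(\mc{A}^{D})^{p}\n\mc{A}^{p}=\mc{A}\n\mc{A}^{D}$ of Lemma~\ref{mr11.2}, and $\mc{A}^{k}=\mc{A}^{k+1}\n\mc{A}^{D}=\mc{A}^{D}\n\mc{A}^{k+1}$ (Theorem~\ref{mr1.10}(c)). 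Pushing $\mc{C}$ through the powers of $\mc{A}$ and collapsing, $\mc{A}^{D}\n\mc{C}$ reduces to $\mc{A}\n\mc{A}^{D}\n\mc{C}\n\mc{A}^{D}$ while $\mc{C}\n\mc{A}^{D}$ reduces to $\mc{A}^{D}\n\mc{C}\n\mc{A}\n\mc{A}^{D}$; substituting the first identity into the second and simplifying with $\mc{A}^{D}\n\mc{A}\n\mc{A}^{D}=\mc{A}^{D}$ yields $\mc{C}\n\mc{A}^{D}=\mc{A}^{D}\n\mc{C}$. This delivers (a) directly, and applying the principle once more to the commuting pair $(\mc{A},\mc{B}^{D})$ --- legitimate because only the index of the first argument enters --- gives $\mc{A}^{D}\n\mc{B}^{D}=\mc{B}^{D}\n\mc{A}^{D}$, the second equality in (b).

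For the first equality in (b), set $\mc{X}=\mc{A}^{D}\n\mc{B}^{D}$. Since $\mc{A},\mc{B},\mc{A}^{D},\mc{B}^{D}$ now commute pairwise, Drazin equations (2) and (3) for $\mc{A}\n\mc{B}$ fall out by regrouping the $\mc{A}$-factors and $\mc{B}$-factors separately and applying $\mc{A}^{D}\n\mc{A}\n\mc{A}^{D}=\mc{A}^{D}$ and $\mc{B}^{D}\n\mc{B}\n\mc{B}^{D}=\mc{B}^{D}$. Equation (1) is the subtle one, since the definition measures it against $ind(\mc{A}\n\mc{B})$, which need not reach $k$. I would first bound $ind(\mc{A}\n\mc{B})\le k$ by writing $(\mc{A}\n\mc{B})^{k}=\mc{A}^{k}\n\mc{B}^{k}=(\mc{A}^{k+1}\n\mc{A}^{D})\n(\mc{B}^{k+1}\n\mc{B}^{D})=(\mc{A}\n\mc{B})^{k+1}\n\mc{X}$, which shows $\mathfrak{R}((\mc{A}\n\mc{B})^{k})\subseteq\mathfrak{R}((\mc{A}\n\mc{B})^{k+1})$ and hence equality of these ranges. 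This same computation is precisely the verification of equation (1) at power $k$; as $k\ge ind(\mc{A}\n\mc{B})$, the three equations at level $k$ characterize the Drazin inverse (the uniqueness argument of Theorem~\ref{mr1.1} runs verbatim with $k$ replacing the index), so $\mc{X}=(\mc{A}\n\mc{B})^{D}$.

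I expect the index reconciliation to be the main obstacle: the clean identity $\mc{A}^{m+1}\n\mc{A}^{D}=\mc{A}^{m}$ (Theorem~\ref{mr1.10}(c)) holds only for $m\ge k$, so one cannot naively test equation (1) at the a priori smaller value $m=ind(\mc{A}\n\mc{B})$. Routing around this by proving $ind(\mc{A}\n\mc{B})\le k$ and then appealing to the power-$k$ characterization of the Drazin inverse is the crux of the argument; the rest is commutation bookkeeping driven by the principle of the second paragraph.
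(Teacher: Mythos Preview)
Your argument is correct and follows essentially the same route as the paper: the reductions $\mc{A}^{D}\n\mc{C}=\mc{A}\n\mc{A}^{D}\n\mc{C}\n\mc{A}^{D}$ and $\mc{C}\n\mc{A}^{D}=\mc{A}^{D}\n\mc{C}\n\mc{A}\n\mc{A}^{D}$ are exactly the paper's intermediate expressions (with $\mc{C}=\mc{B}$), and the verification of the three Drazin equations for $\mc{X}=\mc{A}^{D}\n\mc{B}^{D}$ at power $k$ is identical. Your packaging of part~(a) as a general commutation principle is a bit cleaner---it lets you obtain $\mc{A}^{D}\n\mc{B}^{D}=\mc{B}^{D}\n\mc{A}^{D}$ by a second invocation rather than redoing the computation as the paper does---and your explicit observation that $ind(\mc{A}\n\mc{B})\le k$, so that the level-$k$ equations together with the uniqueness argument of Theorem~\ref{mr1.1} legitimately pin down $(\mc{A}\n\mc{B})^{D}$, is a point the paper silently passes over.
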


\begin{proof}
Since 
\begin{eqnarray*}
\mc{A}^{D}\n\mc{B} &=& \mc{A}^{D}\n\mc{A}\n\mc{A}^{D}\n\mc{B}
= (\mc{A}^{D})^{2}\n\mc{B}\n\mc{A}
= \mc{A}^{D}\n\mc{A}^{D}\n\mc{A}\n\mc{A}^{D}\n\mc{B}\n\mc{A}\\
&=& (\mc{A}^{D})^{3}\n\mc{B}\n\mc{A}^{2}= \cdots ~~= (\mc{A}^{D})^{k+1}\n\mc{B}\n\mc{A}^{k}=(\mc{A}^{D})^{k+1}\n\mc{B}\n\mc{A}^{k+1}\n\mc{A}^{D}\\\
&=&(\mc{A}^{D})^{k+1}\n\mc{A}^{k+1}\n\mc{B}\n\mc{A}^{D}
= \mc{A}^{D}\n\mc{A}\n\mc{B}\n\mc{A}^{D},
\end{eqnarray*}
and 
\begin{eqnarray*}
\mc{B}\n\mc{A}^{D} &=& \mc{B}\n\mc{A}^{D}\n\mc{A}\n\mc{A}^{D}
 = \mc{A}\n\mc{B}\n(\mc{A}^{D})^{2}=\cdots=
 \mc{A}^{k}\n\mc{B}\n(\mc{A}^{D})^{k+1}\\
& =& \mc{A}^{k+1}\n\mc{A}^{D}\n\mc{B}\n(\mc{A}^{D})^{k+1}= \mc{A}^{D}\n\mc{A}^{k+1}\n\mc{B}\n(\mc{A}^{D})^{k+1}\\
 &= &\mc{A}^{D}\n\mc{B}\n\mc{A}^{k+1}\n(\mc{A}^{D})^{k+1}= \mc{A}^{D}\n\mc{B}\n\mc{A}\n\mc{A}^{D}
 = \mc{A}^{D}\n\mc{A}\n\mc{B}\n\mc{A}^{D}. 
\end{eqnarray*}
So we obtain  $\mc{A}^{D}\n\mc{B} = \mc{B}\n\mc{A}^{D}$. Similarly, we can show  $\mc{A}\n\mc{B}^{D} = \mc{B}^{D}\n\mc{A}$. Next we will claim part $(b).$ By using the first part, we get
\begin{eqnarray*}
\mc{B}^{D}\n\mc{A}^{D} &=& \mc{B}^{D}\n\mc{A}^{D}\n\mc{A}\n\mc{A}^{D}
= \mc{A}\n\mc{B}^{D}\n(\mc{A}^{D})^{2}=\cdots= \mc{A}^{k}\n\mc{B}^{D}\n(\mc{A}^{D})^{k+1}\\
&=& \mc{A}^{k+1}\n\mc{A}^{D}\n\mc{B}^{D}\n(\mc{A}^{D})^{k+1}= \mc{A}^{D}\n\mc{A}^{k+1}\n\mc{B}^{D}\n(\mc{A}^{D})^{k+1}\\
&=& \mc{A}^{D}\n\mc{B}^{D}\n\mc{A}^{k+1}\n(\mc{A}^{D})^{k+1}=\mc{A}^{D}\n\mc{B}^{D}\n\mc{A}\n\mc{A}^{D},
\end{eqnarray*}
and 
\begin{eqnarray*}
\mc{A}^{D}\n\mc{B}^{D} &=& \mc{A}^{D}\n\mc{A}\n\mc{A}^{D}\n\mc{B}^{D}
= (\mc{A}^{D})^{2}\n\mc{B}^{D}\n\mc{A}=\cdots= (\mc{A}^{D})^{k+1}\n\mc{B}^{D}\n\mc{A}^{k}\\
&=& (\mc{A}^{D})^{k+1}\n\mc{B}^{D}\n\mc{A}^{k+1}\n\mc{A}^{D}= (\mc{A}^{D})^{k+1}\n\mc{A}^{k}\n\mc{B}^{D}\n\mc{A}\n\mc{A}^{D}\\
&=& \mc{A}^{D}\n\mc{B}^{D}\n\mc{A}\n\mc{A}^{D}.
\end{eqnarray*}
Thus $\mc{B}^{D}\n\mc{A}^{D}=\mc{A}^{D}\n\mc{B}^{D}.$ Now let  $\mc{X} =\mc{B}^{D}\n\mc{A}^{D}$.  Since 
\begin{eqnarray*}
 (\mc{A}\n\mc{B})^{k+1}\n\mc{X}& =& \mc{A}^{k+1}\n\mc{B}^{k+1}\n\mc{B}^{D}\n\mc{A}^{D} = \mc{A}^{k+1}\n\mc{B}^{k}\n\mc{A}^{D} = \mc{B}^{k}\n\mc{A}^{k+1}\n\mc{A}^{D}\\
& = &\mc{B}^{k}\n\mc{A}^{k}
 = (\mc{A}\n\mc{B})^{k} , 
\end{eqnarray*}
\begin{eqnarray*}
  \mc{X}\n(\mc{A}\n\mc{B})\n\mc{X}& =& \mc{B}^{D}\n\mc{A}^{D}\n\mc{A}\n\mc{B}\n\mc{B}^{D}\n\mc{A}^{D} 
= \mc{B}^{D}\n\mc{A}^{D}\n\mc{A}\n\mc{B}\n\mc{A}^{D}\n\mc{B}^{D}\\ &=&\mc{B}^{D}\n\mc{A}^{D}\n\mc{A}\n\mc{A}^{D}\n\mc{B}\n\mc{B}^{D}=\mc{B}^{D}\n\mc{A}^{D}\n\mc{B}\n\mc{B}^{D}\\
&=& \mc{A}^{D}\n\mc{B}^{D}\n\mc{B}\n\mc{B}^{D}
= \mc{A}^{D}\n\mc{B}^{D} = \mc{B}^{D}\n\mc{A}^{D} = \mc{X},  \mbox{ and}
\end{eqnarray*}
\begin{eqnarray*}
 \mc{A}\n\mc{B}\n\mc{X} &=& \mc{A}\n\mc{B}\n\mc{B}^{D}\n\mc{A}^{D} = \mc{A}\n\mc{B}^{D}\n\mc{B}\n\mc{A}^{D} = \mc{B}^{D}\n\mc{A}\n\mc{A}^{D}\n\mc{B}\\
 &=& \mc{B}^{D}\n\mc{A}^{D}\n\mc{A}\n\mc{B} = \mc{X}\n\mc{A}\n\mc{B} .
\end{eqnarray*}
 Thus $\mc{X}$ is the Drazin inverse of $\mc{A}\n\mc{B}.$ Hence $(\mc{A}\n\mc{B})^{D} = \mc{B}^{D}\n\mc{A}^{D} = \mc{A}^{D}\n\mc{B}^{D}$.
\end{proof}

However, the converse of the above theorem need not be true, as shown below with an example.

\begin{example}\label{ex1.10}
Consider the tensor $~\mc{A}=(a_{ijkl})
 \in \mathbb{R}^{\overline{2\times3}\times\overline{2\times3}}$ defined in Example \eqref{ex1.1} and  a tensor $~\mc{B}=(b_{ijkl})
 \in \mathbb{R}^{\overline{2\times3}\times\overline{2\times3}}$ with entries
\begin{eqnarray*}
b_{ij11} =
    \begin{pmatrix}
    0 & 0 & 0 \\
    2 & 0 &  0
    \end{pmatrix},~
b_{ij12} =
    \begin{pmatrix}
     0 & 0 & 0\\
     0 & 4 & 0
    \end{pmatrix},~
b_{ij13} =
    \begin{pmatrix}
     0 & 0 & 0\\
     0 & 0 & 1
\end{pmatrix},\\
b_{ij21} =
    \begin{pmatrix}
     0 & 3 & 0\\
     0 & 0 & 0
    \end{pmatrix},~
b_{ij22} =
    \begin{pmatrix}
    0 & 0 & 5 \\
    0 & 0 &  1
    \end{pmatrix},~
b_{ij23} =
    \begin{pmatrix}
     0 & 0 & 0\\
     0 & 0 & 2
    \end{pmatrix}.
\end{eqnarray*}

Then $ \mc{A}\n\mc{B} =(x_{ijkl})
 \in \mathbb{R}^{\overline{2\times3}\times\overline{2\times3}}$ and $ \mc{B}\n\mc{A} =(y_{ijkl})
 \in \mathbb{R}^{\overline{2\times3}\times\overline{2\times3}},$ where 
\begin{eqnarray*}
x_{ij11} =
    \begin{pmatrix}
    0 & 2 & 2 \\
    0 & 2 &  2
    \end{pmatrix},~
x_{ij12} =
    \begin{pmatrix}
     0 & 0 & 4\\
     0 & 0 & 4
    \end{pmatrix},~
x_{ij13} =
    \begin{pmatrix}
     0 & 0 & 0\\
     0 & 0 & 0
\end{pmatrix},\\
x_{ij21} =
    \begin{pmatrix}
     0 & 0 & 3\\
     0 & 3 & 3
    \end{pmatrix},~
x_{ij22} =
    \begin{pmatrix}
    0 & 0 & 0 \\
    0 & 0 &  5
    \end{pmatrix},~
x_{ij23} =
    \begin{pmatrix}
     0 & 0 & 0\\
     0 & 0 & 0
    \end{pmatrix},
\end{eqnarray*}
and 
\begin{eqnarray*}
y_{ij11} =
    \begin{pmatrix}
    0 & 3 & 5 \\
    0 & 4 &  4
   \end{pmatrix},~
y_{ij12} =
    \begin{pmatrix}
     0 & 0 & 5\\
     0 & 0 & 4
    \end{pmatrix},~
y_{ij13} =
    \begin{pmatrix}
     0 & 0 & 0\\
     0 & 0 & 2
\end{pmatrix},\\
y_{ij21} =
    \begin{pmatrix}
     0 & 0 & 5\\
     0 & 4 & 4
    \end{pmatrix},~
y_{ij22} =
    \begin{pmatrix}
    0 & 0 & 0 \\
    0 & 0 &  3
    \end{pmatrix},~
y_{ij23} =
    \begin{pmatrix}
     0 & 0 & 0\\
     0 & 0 & 0
    \end{pmatrix}.
\end{eqnarray*}
We can easily see that $\mc{A}\n\mc{B} \neq  \mc{B}\n\mc{A}$ but  $(\mc{A}*_N\mc{B})^D = \mc{A}^D*_N\mc{B}^D  = \mc{O}
 \in \mathbb{R}^{\overline{2\times3}\times\overline{2\times3}}$.
\end{example}

The next result presents a characterization of the Drazin inverse of product of two tensors. 
\begin{theorem}\label{mr1.4}
Let $\mc{A}, \mc{B}\in \mathbb{C}^{\textbf{I}(N) \times \textbf{I}(N) }.$  Then 
$(\mc{A}\n\mc{B})^{D} = \mc{A}\n[(\mc{B}\n\mc{A})^{2}]^{D}\n\mc{B}.$
\end{theorem}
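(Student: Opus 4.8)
The plan is to set $\mc{Q}=\mc{B}\n\mc{A}$ and $\mc{C}=\mc{A}\n\mc{B}$, to rewrite the proposed expression via Theorem~\ref{mr1.2}(a) as $\mc{X}:=\mc{A}\n[(\mc{B}\n\mc{A})^{2}]^{D}\n\mc{B}=\mc{A}\n(\mc{Q}^{D})^{2}\n\mc{B}$, and then to prove $\mc{X}=\mc{C}^{D}$ by checking the three defining relations of Definition~\ref{dz1.1}. The one computational fact that drives everything is the braiding identity $\mc{C}^{n}=(\mc{A}\n\mc{B})^{n}=\mc{A}\n\mc{Q}^{n-1}\n\mc{B}$ for every $n\geq 1$, used together with the elementary consequences of the Drazin axioms for $\mc{Q}$, namely $\mc{Q}\n\mc{Q}^{D}=\mc{Q}^{D}\n\mc{Q}$, $(\mc{Q}^{D})^{2}\n\mc{Q}=\mc{Q}^{D}$ and $\mc{Q}\n(\mc{Q}^{D})^{2}=\mc{Q}^{D}$.

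First I would dispose of the two easy relations. Taking $n=1$ in the braiding identity, $\mc{C}\n\mc{X}=\mc{A}\n\mc{Q}\n(\mc{Q}^{D})^{2}\n\mc{B}=\mc{A}\n\mc{Q}^{D}\n\mc{B}$, while $\mc{X}\n\mc{C}=\mc{A}\n(\mc{Q}^{D})^{2}\n\mc{Q}\n\mc{B}=\mc{A}\n\mc{Q}^{D}\n\mc{B}$, which yields the commutation relation $\mc{C}\n\mc{X}=\mc{X}\n\mc{C}$. Feeding this back, $\mc{X}\n\mc{C}\n\mc{X}=\mc{X}\n(\mc{C}\n\mc{X})=\mc{A}\n(\mc{Q}^{D})^{2}\n\mc{Q}\n\mc{Q}^{D}\n\mc{B}=\mc{A}\n(\mc{Q}^{D})^{2}\n\mc{B}=\mc{X}$, so the second Drazin equation holds. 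Both are short manipulations in the $\mc{Q}^{D}$-identities above and present no difficulty.

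The genuine work is the first Drazin equation. By the braiding identity, $\mc{C}^{m+1}\n\mc{X}=\mc{A}\n\mc{Q}^{m+1}\n(\mc{Q}^{D})^{2}\n\mc{B}=\mc{A}\n\mc{Q}^{m}\n\mc{Q}^{D}\n\mc{B}$ for every $m\geq 1$; and by Theorem~\ref{mr1.10}(c) (or Lemma~\ref{mr11.2}) one has $\mc{Q}^{m}\n\mc{Q}^{D}=\mc{Q}^{m-1}$ as soon as $m-1\geq \mathrm{ind}(\mc{Q})$, whence $\mc{C}^{m+1}\n\mc{X}=\mc{A}\n\mc{Q}^{m-1}\n\mc{B}=\mc{C}^{m}$ for all $m\geq \mathrm{ind}(\mc{Q})+1$. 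The main obstacle is precisely that this establishes the first Drazin equation only for large exponents, whereas Definition~\ref{dz1.1} demands it at the exact value $k=\mathrm{ind}(\mc{C})$; and in general $\mathrm{ind}(\mc{A}\n\mc{B})$ and $\mathrm{ind}(\mc{B}\n\mc{A})$ need not agree (they can differ by one in either direction), so one cannot simply take $m=k$.

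To clear this obstacle I would argue that the relation for a large exponent descends to the exact index. Choose $m\geq\max\{\mathrm{ind}(\mc{Q})+1,\ \mathrm{ind}(\mc{C})\}$, write $\mc{E}=\mc{C}\n\mc{X}=\mc{X}\n\mc{C}$, and let $\mc{I}$ denote the identity tensor. The equation just proved reads $\mc{C}^{m}\n(\mc{E}-\mc{I})=\mc{O}$, so $\mathfrak{R}(\mc{E}-\mc{I})\subseteq\mc{N}(\mc{C}^{m})$; but by Theorem~\ref{weitm3.2} we have $\mc{N}(\mc{C}^{m})=\mc{N}(\mc{C}^{k})$ with $k=\mathrm{ind}(\mc{C})$, hence $\mc{C}^{k}\n(\mc{E}-\mc{I})=\mc{O}$, i.e. $\mc{C}^{k+1}\n\mc{X}=\mc{C}^{k}$. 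Thus $\mc{X}$ satisfies all three equations of Definition~\ref{dz1.1} at the index of $\mc{C}$, and the uniqueness of the Drazin inverse (Theorem~\ref{mr1.1}) forces $\mc{X}=\mc{C}^{D}=(\mc{A}\n\mc{B})^{D}$. I expect the only subtle point beyond routine bookkeeping to be this index-reduction step, which rests on the null-space stabilization recorded in Theorem~\ref{weitm3.2}.
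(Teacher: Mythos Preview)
Your proof is correct and follows essentially the same route as the paper: both set $\mc{X}=\mc{A}\n[(\mc{B}\n\mc{A})^{2}]^{D}\n\mc{B}$ and verify the three Drazin conditions for $\mc{A}\n\mc{B}$ via the braiding identity $(\mc{A}\n\mc{B})^{n}=\mc{A}\n(\mc{B}\n\mc{A})^{n-1}\n\mc{B}$ together with the standard $\mc{Q}^{D}$-identities. The only difference is cosmetic rigor: the paper simply takes $k=\max\{\mathrm{ind}(\mc{A}\n\mc{B}),\mathrm{ind}(\mc{B}\n\mc{A})\}$ and checks $(\mc{A}\n\mc{B})^{k+2}\n\mc{X}=(\mc{A}\n\mc{B})^{k+1}$ without comment, whereas you explicitly justify via Theorem~\ref{weitm3.2} why verifying the first equation at a large exponent descends to the exact index of $\mc{A}\n\mc{B}$.
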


\begin{proof}
Let $\mc{X} = \mc{A}\n[(\mc{B}\n\mc{A})^{2}]^{D}\n\mc{B}$ and $k = \max \{\mbox{ind}(\mc{A}\n\mc{B}),~ \mbox{ind}(\mc{B}\n\mc{A})\}$.\\ Now, we have
\begin{eqnarray*}
   (\mc{A}\n\mc{B})^{k+2}\n\mc{X}& =& (\mc{A}\n\mc{B})^{k+1}\n\mc{A}\n\mc{B}\n\mc{A}\n[(\mc{B}\n\mc{A})^{2}]^{D}\n\mc{B}\\
   &=& (\mc{A}\n\mc{B})^{k+1}\n\mc{A}\n(\mc{B}\n\mc{A})\n(\mc{B}\n\mc{A})^{D}\n(\mc{B}\n\mc{A})^{D}\n\mc{B}\\
   &=& (\mc{A}\n\mc{B})^{k+1}\n\mc{A}\n(\mc{B}\n\mc{A})^{D}\n\mc{B}= \mc{A}\n(\mc{B}\n\mc{A})^{k+1}\n(\mc{B}\n\mc{A})^{D}\n\mc{B}\\
   &=& \mc{A}\n(\mc{B}\n\mc{A})^{k}\n\mc{B} = (\mc{A}\n\mc{B})^{k+1},
\end{eqnarray*}
and 
\begin{eqnarray*}
 \mc{X}\n(\mc{A}\n\mc{B})\n\mc{X}& =& \mc{A}\n[(\mc{B}\n\mc{A})^{2}]^{D}\n\mc{B}\n\mc{A}\n\mc{B}\n\mc{A}\n[(\mc{B}\n\mc{A})^{2}]^{D}\n\mc{B}\\
 &=& \mc{A}\n[(\mc{B}\n\mc{A})^{2}]^{D}\n(\mc{B}\n\mc{A})^{2}\n[(\mc{B}\n\mc{A})^{2}]^{D}\n\mc{B}\\
 &=& \mc{A}\n[(\mc{B}\n\mc{A})^{2}]^{D}\n\mc{B} = \mc{X}, 
\end{eqnarray*}
Further, 
\begin{eqnarray*}
\mc{X}\n\mc{A}\n\mc{B} &=& \mc{A}\n[(\mc{B}\n\mc{A})^{D}]^{2}\n\mc{B}\n\mc{A}\n\mc{B}\\
&=& \mc{A}\n(\mc{B}\n\mc{A})^{D}\n(\mc{B}\n\mc{A})*_N(\mc{B}\n\mc{A})^{D}\n\mc{B}\\
&=&\mc{A}\n(\mc{B}\n\mc{A})\n(\mc{B}\n\mc{A})^{D}\n(\mc{B}\n\mc{A})^{D}\n\mc{B}\\
&=& (\mc{A}\n\mc{B})\n\mc{A}\n[(\mc{B}\n\mc{A})^{2}]^{D}\n\mc{B} 
= \mc{A}\n\mc{B}\n\mc{X}.
\end{eqnarray*}
Therefore, $(\mc{A}\n\mc{B})^{D} = \mc{A}\n[(\mc{B}\n\mc{A})^{2}]^{D}\n\mc{B}$.
\end{proof}

\begin{remark}
In general, the Drazin inverse of addition (resp. subtraction) of two different tensor is not equal to individuals, i.e., $(\mc{A}\pm \mc{B})^D \neq \mc{A}^D \pm \mc{B}^D$, as shown below with an example. 
\end{remark}

\begin{example}\label{ex1.4}
Consider the tensor $~\mc{A}=(a_{ijkl})
 \in \mathbb{R}^{\overline{2\times3}\times\overline{2\times3}}$ defined in Example \eqref{ex1.1} and the tensor $~\mc{B}=(a_{ijkl})
 \in \mathbb{R}^{\overline{2\times3}\times\overline{2\times3}}$ defined in Example \eqref{ex1.2}.\\
Then   $\mc{A}^D = \mc{B}^D = \mc{O} = \mc{A}^D + \mc{B}^D \neq (\mc{A}+\mc{B})^D,$ 
where $ \mc{A}^D + \mc{B}^D = (x_{ijkl})
 \in \mathbb{R}^{\overline{2\times3}\times\overline{2\times3}}$ with entries
\begin{eqnarray*}
x_{ij11} =
    \begin{pmatrix}
    0 & 2 & 2 \\
    0 & 2 &  2
    \end{pmatrix},~
x_{ij12} =
    \begin{pmatrix}
     0 & 0 & 4\\
     0 & 0 & 4
    \end{pmatrix},~
x_{ij13} =
    \begin{pmatrix}
     0 & 0 & 0\\
     0 & 0 & 0
\end{pmatrix},\\
x_{ij21} =
    \begin{pmatrix}
     0 & 0 & 3\\
     0 & 3 & 3
    \end{pmatrix},~
x_{ij22} =
    \begin{pmatrix}
    0 & 0 & 0 \\
    0 & 0 &  5
    \end{pmatrix},~
x_{ij23} =
    \begin{pmatrix}
     0 & 0 & 0\\
     0 & 0 & 0
    \end{pmatrix}.
\end{eqnarray*}
Similarly, one can show  $ \mc{A}^D - \mc{B}^D  \neq (\mc{A}-\mc{B})^D.$
\end{example}
At this stage one may be excited to know when does the Drazin  inverse  of  addition  (resp.  subtraction)  of  two  different tensor will be equal  to the individuals. The following theorem answer to this question. 
\begin{theorem}\label{mr1.14}
Let $\mc{A}, \mc{B} \in \mathbb{C}^{\textbf{I}(N) \times \textbf{I}(N) }$ be of index $k$. If $\mc{A}\n\mc{B} = \mc{B}\n\mc{A} = \mc{O}$, then\
\begin{enumerate}
    \item[(a)] $(\mc{A}+\mc{B})^{D} = \mc{A}^{D}+\mc{B}^{D}$,
    \item[(b)] $(\mc{A}-\mc{B})^{D} = \mc{A}^{D}-\mc{B}^{D}$.
\end{enumerate}
\end{theorem}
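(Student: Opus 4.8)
The plan is to verify directly that $\mc{X} := \mc{A}^{D} + \mc{B}^{D}$ satisfies the three defining equations of the Drazin inverse of $\mc{C} := \mc{A} + \mc{B}$ and then invoke uniqueness (Theorem \ref{mr1.1}). The engine of the whole argument is a set of cross-annihilation identities, which I would establish first:
\[
\mc{A}^{D}\n\mc{B} = \mc{B}\n\mc{A}^{D} = \mc{A}\n\mc{B}^{D} = \mc{B}^{D}\n\mc{A} = \mc{O}.
\]
To obtain these, recall from Theorem \ref{mr1.10}(b) (taking $l = k+1$, $m = k$) that $\mc{A}^{D} = \mc{A}^{k}\n(\mc{A}^{D})^{k+1} = (\mc{A}^{D})^{k+1}\n\mc{A}^{k}$, the two orders agreeing because $\mc{A}\n\mc{A}^{D} = \mc{A}^{D}\n\mc{A}$ forces every power of $\mc{A}$ to commute with $\mc{A}^{D}$. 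Since $k \ge 1$ and $\mc{A}\n\mc{B} = \mc{B}\n\mc{A} = \mc{O}$ give $\mc{A}^{k}\n\mc{B} = \mc{A}^{k-1}\n(\mc{A}\n\mc{B}) = \mc{O}$ and $\mc{B}\n\mc{A}^{k} = \mc{O}$, the representation immediately yields $\mc{A}^{D}\n\mc{B} = (\mc{A}^{D})^{k+1}\n\mc{A}^{k}\n\mc{B} = \mc{O}$ and $\mc{B}\n\mc{A}^{D} = \mc{O}$; the symmetric computation for $\mc{B}$ supplies $\mc{A}\n\mc{B}^{D} = \mc{B}^{D}\n\mc{A} = \mc{O}$.

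Next I would record the two structural facts that make the verification go through. First, since every mixed word in $\mc{A}$ and $\mc{B}$ contains an adjacent factor $\mc{A}\n\mc{B}$ or $\mc{B}\n\mc{A}$, one has $\mc{C}^{m} = \mc{A}^{m} + \mc{B}^{m}$ for every $m \ge 1$. Second, I would confirm that $\mathrm{ind}(\mc{C}) \le k$: if $\mc{C}^{k+1}\n\mc{Z} = \mc{A}^{k+1}\n\mc{Z} + \mc{B}^{k+1}\n\mc{Z} = \mc{O}$, then left-multiplying by $\mc{A}^{k}$ and using $\mc{A}^{k}\n\mc{B}^{k+1} = \mc{O}$ gives $\mc{A}^{2k+1}\n\mc{Z} = \mc{O}$, whence $\mc{A}^{k}\n\mc{Z} = \mc{O}$ by the stabilization of null spaces in Theorem \ref{weitm3.2}; consequently $\mc{B}^{k+1}\n\mc{Z} = \mc{O}$, so $\mc{B}^{k}\n\mc{Z} = \mc{O}$ as well, and therefore $\mc{C}^{k}\n\mc{Z} = \mc{O}$. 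This gives $\mc{N}(\mc{C}^{k}) = \mc{N}(\mc{C}^{k+1})$, so the exponent $k$ is an admissible index in the defining system for $\mc{C}^{D}$.

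With these in hand I would verify the three equations for $\mc{X} = \mc{A}^{D} + \mc{B}^{D}$. Using $\mc{C}^{k+1} = \mc{A}^{k+1} + \mc{B}^{k+1}$, the cross-annihilation identities kill $\mc{A}^{k+1}\n\mc{B}^{D} = \mc{A}^{k}\n(\mc{A}\n\mc{B}^{D})$ and $\mc{B}^{k+1}\n\mc{A}^{D}$, leaving $\mc{C}^{k+1}\n\mc{X} = \mc{A}^{k+1}\n\mc{A}^{D} + \mc{B}^{k+1}\n\mc{B}^{D} = \mc{A}^{k} + \mc{B}^{k} = \mc{C}^{k}$. The same cancellations reduce $\mc{X}\n\mc{C}$ to $\mc{A}^{D}\n\mc{A} + \mc{B}^{D}\n\mc{B}$ and $\mc{C}\n\mc{X}$ to $\mc{A}\n\mc{A}^{D} + \mc{B}\n\mc{B}^{D}$, which coincide by the commutativity clause, giving equation (3); expanding $\mc{X}\n\mc{C}\n\mc{X}$ with the mixed terms again vanishing leaves $\mc{A}^{D}\n\mc{A}\n\mc{A}^{D} + \mc{B}^{D}\n\mc{B}\n\mc{B}^{D} = \mc{A}^{D} + \mc{B}^{D} = \mc{X}$, which is equation (2). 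Uniqueness (Theorem \ref{mr1.1}) then yields $(\mc{A}+\mc{B})^{D} = \mc{A}^{D} + \mc{B}^{D}$, proving (a). For (b) I would apply (a) to the pair $\mc{A}$ and $-\mc{B}$, noting $\mathrm{ind}(-\mc{B}) = k$, that $(-\mc{B})^{D} = -\mc{B}^{D}$ (a one-line check against the defining equations, since $(-1)^{k+2}=(-1)^{k}$), and that $\mc{A}\n(-\mc{B}) = (-\mc{B})\n\mc{A} = \mc{O}$ still holds.

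The step I expect to be the main obstacle is the exponent bookkeeping: the Drazin equations are posed at the exact index, yet $\mathrm{ind}(\mc{A}+\mc{B})$ may be strictly smaller than $k$, so one cannot naively verify equation (1) at the true index of $\mc{C}$ (the identity $\mc{A}^{m+1}\n\mc{A}^{D} = \mc{A}^{m}$ is guaranteed only for $m \ge k$). This is precisely why I first secure $\mathrm{ind}(\mc{A}+\mc{B}) \le k$ and then verify at the safe exponent $k$, relying on the fact that the three defining equations at any admissible exponent $\ell \ge \mathrm{ind}(\mc{C})$ still single out $\mc{C}^{D}$ — a fact the uniqueness argument of Theorem \ref{mr1.1} delivers verbatim with $k$ replaced by $\ell$.
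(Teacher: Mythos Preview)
Your proof is correct and follows the same strategy as the paper: verify directly that $\mc{A}^{D}+\mc{B}^{D}$ satisfies the three Drazin equations for $\mc{A}+\mc{B}$, with all cross terms vanishing. The paper obtains only $\mc{A}^{k+1}\n\mc{B}^{D}=\mc{B}^{k+1}\n\mc{A}^{D}=\mc{O}$ explicitly and then appeals to Theorem~\ref{mr1.5} for the remaining cancellations (and says ``similar lines'' for (b)), while you instead derive the stronger identities $\mc{A}^{D}\n\mc{B}=\mc{B}\n\mc{A}^{D}=\mc{A}\n\mc{B}^{D}=\mc{B}^{D}\n\mc{A}=\mc{O}$ from the representation $\mc{A}^{D}=(\mc{A}^{D})^{k+1}\n\mc{A}^{k}$, carefully justify that verifying at exponent $k$ suffices, and reduce (b) to (a) via $-\mc{B}$; these are refinements of detail, not a different route.
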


\begin{proof}
Since $\mc{A}\n\mc{B} = \mc{B}\n\mc{A} = \mc{O}$, we have $(\mc{A}+\mc{B})^{k+1} = \mc{A}^{k+1}+\mc{B}^{k+1}$ and 
\begin{equation}
\mc{A}^{k+1}\n\mc{B}^{D} = \mc{A}^{k}\n\mc{A}\n\mc{B}^{D}\n\mc{B}\n\mc{B}^{D} = \mc{A}^{k}\n\mc{A}\n\mc{B}\n\mc{B}^{D}\n\mc{B}^{D} = \mc{O}.   
\end{equation}
 Similarly, we can show  $\mc{B}^{k+1}\n\mc{A}^{D} = \mc{O}$. Now using these relations and the Theorem \ref{mr1.5}, we obtain
 \begin{eqnarray}\label{add1}
(\mc{A}+\mc{B})^{k+1}\n(\mc{A}^{D}+\mc{B}^{D}) 
&=& \mc{A}^{k}+\mc{B}^{k}= (\mc{A}+\mc{B})^{k},
 \end{eqnarray}
 \begin{eqnarray}\label{add2}
 (\mc{A}^{D}+\mc{B}^{D})\n(\mc{A}+\mc{B})\n(\mc{A}^{D}+\mc{B}^{D})
 &=& \mc{A}^{D}+\mc{B}^{D},
 \end{eqnarray}
and
\begin{eqnarray}\label{add3}
(\mc{A}+\mc{B})\n(\mc{A}^{D}+\mc{B}^{D}) 
&=& (\mc{A}^{D}+\mc{B}^{D})\n(\mc{A}+\mc{B}).
\end{eqnarray}
From \eqref{add1}, \eqref{add2} and \eqref{add3}, we conclude $\mc{A}^{D}+\mc{B}^{D}$ is the Drazin inverse of $\mc{A}+\mc{B}$. Hence proved part $(a).$ Using the similar lines, we can show  $(\mc{A}-\mc{B})^D=\mc{A}^{D}-\mc{B}^{D}.$ 
\end{proof}
In case of the group inverse, we can relax one sufficient condition of Theorem \ref{mr1.14} and computes the group inverse of the tensor $\mc{A}+\mc{B}$ as per the following theorem.
\begin{theorem}\label{mg1.1}
Suppose $\mc{A}, \mc{B} \in \mathbb{C}^{\textbf{I}(N) \times \textbf{I}(N) }$ are of index one. If $\mc{A}\n\mc{B} = \mc{O}$, then
\begin{equation*}
(\mc{A}+\mc{B})^{\#} = (I-\mc{B}\n\mc{B}^{\#})\n\mc{A}^{\#}+\mc{B}^{\#}\n(I-\mc{A}\n\mc{A}^{\#}).
\end{equation*}
\end{theorem}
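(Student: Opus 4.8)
The plan is to use the uniqueness of the Drazin inverse (Theorem \ref{mr1.1}): since the group inverse is precisely the Drazin inverse of index one, it suffices to denote by $\mc{X}$ the proposed right-hand side and to verify the three defining equations of Definition \ref{dz1.1} with $k=1$, namely $(\mc{A}+\mc{B})^{2}\n\mc{X} = \mc{A}+\mc{B}$, $\mc{X}\n(\mc{A}+\mc{B})\n\mc{X} = \mc{X}$, and $(\mc{A}+\mc{B})\n\mc{X} = \mc{X}\n(\mc{A}+\mc{B})$.

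First I would assemble the algebraic identities forced by the hypotheses. Because $\mc{A}$ has index one, the group-inverse axioms give $\mc{A}\n\mc{A}^{\#} = \mc{A}^{\#}\n\mc{A}$, $\mc{A}\n\mc{A}^{\#}\n\mc{A} = \mc{A}$, and in particular $\mc{A}^{\#} = (\mc{A}^{\#})^{2}\n\mc{A}$; the same holds for $\mc{B}$. Feeding $\mc{A}\n\mc{B} = \mc{O}$ into these gives the chain of annihilation identities $\mc{A}^{\#}\n\mc{B} = (\mc{A}^{\#})^{2}\n\mc{A}\n\mc{B} = \mc{O}$, and then $\mc{A}\n\mc{B}^{\#} = \mc{O}$ and $\mc{A}^{\#}\n\mc{B}^{\#} = \mc{O}$ (writing $\mc{B}^{\#} = \mc{B}\n(\mc{B}^{\#})^{2}$). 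Abbreviating the idempotents $\mc{E} = \mc{A}\n\mc{A}^{\#}$ and $\mc{F} = \mc{B}\n\mc{B}^{\#}$, these yield $\mc{E}\n\mc{B} = \mc{O}$ and $\mc{E}\n\mc{F} = \mc{O}$, together with $\mc{A}\n\mc{E} = \mc{A}$, $\mc{A}\n\mc{F} = \mc{O}$ and $\mc{B}\n\mc{F} = \mc{F}\n\mc{B} = \mc{B}$. These are the only facts the verification needs.

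Next I would carry out the three checks in a convenient order. For commutativity, expanding $\mc{X} = (I-\mc{F})\n\mc{A}^{\#} + \mc{B}^{\#}\n(I-\mc{E})$ and using the identities above, I expect both $(\mc{A}+\mc{B})\n\mc{X}$ and $\mc{X}\n(\mc{A}+\mc{B})$ to collapse to the same tensor $\mc{E}+\mc{F}-\mc{F}\n\mc{E}$, which settles the third equation. Left-multiplying that common value by $\mc{A}+\mc{B}$ and invoking $\mc{A}\n\mc{E}=\mc{A}$, $\mc{A}\n\mc{F}=\mc{O}$, $\mc{B}\n\mc{F}=\mc{B}$ makes the $\mc{B}\n\mc{E}$ terms cancel in pairs and returns $\mc{A}+\mc{B}$, giving the first equation. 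For the second equation I would write $\mc{X}\n(\mc{A}+\mc{B})\n\mc{X} = \mc{X}\n(\mc{E}+\mc{F}-\mc{F}\n\mc{E})$ and compute the pieces $\mc{X}\n\mc{E} = (I-\mc{F})\n\mc{A}^{\#}$ and $\mc{X}\n\mc{F} = \mc{B}^{\#}$, which reassemble exactly into $\mc{X}$. By Theorem \ref{mr1.1} this identifies $\mc{X}$ as $(\mc{A}+\mc{B})^{\#}$.

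The main obstacle is the asymmetry of the hypothesis: unlike Theorem \ref{mr1.14}, here only $\mc{A}\n\mc{B} = \mc{O}$ is assumed, so $\mc{B}\n\mc{A}$ need not vanish and the mixed products $\mc{B}\n\mc{A}^{\#}$, $\mc{B}^{\#}\n\mc{A}$ and $\mc{F}\n\mc{E}$ survive. These stray cross terms cannot be killed outright, and the correcting projectors $(I-\mc{F})$ and $(I-\mc{E})$ in the formula are exactly what absorb them; so the delicate part of the bookkeeping is to confirm that every non-annihilating term either cancels against its partner or recombines into $\mc{X}$, and to resist the temptation to simplify $\mc{X}$ prematurely.
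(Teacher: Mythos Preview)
Your proposal is correct and follows essentially the same approach as the paper: both set $\mc{X}$ equal to the claimed expression, derive the annihilation identities $\mc{A}^{\#}\n\mc{B}=\mc{A}\n\mc{B}^{\#}=\mc{A}^{\#}\n\mc{B}^{\#}=\mc{O}$ from $\mc{A}\n\mc{B}=\mc{O}$, and then verify the three defining equations of the group inverse by direct expansion. The only difference is organizational: the paper expands the pieces $\mc{A}\n\mc{X}\n\mc{A}$, $\mc{A}\n\mc{X}\n\mc{B}$, $\mc{B}\n\mc{X}\n\mc{A}$, $\mc{B}\n\mc{X}\n\mc{B}$ separately and checks $(\mc{A}+\mc{B})\n\mc{X}\n(\mc{A}+\mc{B})=\mc{A}+\mc{B}$ rather than $(\mc{A}+\mc{B})^{2}\n\mc{X}=\mc{A}+\mc{B}$, while you first compute the common value $(\mc{A}+\mc{B})\n\mc{X}=\mc{X}\n(\mc{A}+\mc{B})=\mc{E}+\mc{F}-\mc{F}\n\mc{E}$ and reuse it, which is a slightly tidier bookkeeping of the same calculation.
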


\begin{proof}
Let $\mc{X} = (I-\mc{B}\n\mc{B}^{\#})\n\mc{A}^{\#}+\mc{B}^{\#}\n(I-\mc{A}\n\mc{A}^{\#})$. By using $\mc{A}\n\mc{B} = \mc{O}$, we obtain 
\begin{eqnarray*}
 \mc{A}\n\mc{X}\n\mc{A} &=& \mc{A}\n\mc{A}^{\#}\n\mc{A}-\mc{A}\n\mc{B}\n\mc{B}^{\#}\n\mc{A}^{\#}\n\mc{A}+\mc{A}\n\mc{B}^{\#}\n\mc{A}-\mc{A}\n\mc{B}^{\#}\n\mc{A}\n\mc{A}^{\#}\n\mc{A}\\
 &=& \mc{A}+\mc{A}\n\mc{B}^{\#}\n\mc{A}-\mc{A}\n\mc{B}^{\#}\n\mc{A}=\mc{A},
\end{eqnarray*}
and 
\begin{eqnarray*}
 \mc{A}\n\mc{X}\n\mc{B} &=& \mc{A}\n\mc{A}^{\#}\n\mc{B}-\mc{A}\n\mc{B}\n\mc{B}^{\#}\n\mc{A}^{\#}\n\mc{B}+\mc{A}\n\mc{B}^{\#}\n\mc{B}-\mc{A}\n\mc{B}^{\#}\n\mc{A}\n\mc{A}^{\#}\n\mc{B}\\
 &=& \mc{A}^{\#}\n\mc{A}\n\mc{B}+\mc{A}\n\mc{B}\n\mc{B}^{\#}=\mc{O}.
\end{eqnarray*}
Similarly, we can show $\mc{B}\n\mc{X}\n\mc{B} = \mc{B}$ and $\mc{B}\n\mc{X}\n\mc{A} = \mc{O}$. Also we have 
\begin{eqnarray*}
 \mc{X}\n\mc{A}\n\mc{X} &=& (\mc{A}^{\#}\n\mc{A}-\mc{B}\n\mc{B}^{\#}\n\mc{A}^{\#}\n\mc{A}+\mc{B}^{\#}\n\mc{A}-\mc{B}^{\#}\n\mc{A}\n\mc{A}^{\#}\n\mc{A})\n\\
 & & (\mc{A}^{\#}-\mc{B}\n\mc{B}^{\#}\n\mc{A}^{\#}+\mc{B}^{\#}-\mc{B}^{\#}\n\mc{A}\n\mc{A}^{\#})\\
 &=& (\mc{A}^{\#}\n\mc{A}-\mc{B}\n\mc{B}^{\#}\n\mc{A}^{\#}\n\mc{A})\n(\mc{A}^{\#}-\mc{B}\n\mc{B}^{\#}\n\mc{A}^{\#}+\mc{B}^{\#}-\mc{B}^{\#}\n\mc{A}\n\mc{A}^{\#})\\
 &=& \mc{A}^{\#}-\mc{B}\n\mc{B}^{\#}\n\mc{A}^{\#},
\end{eqnarray*}
and $\mc{X}\n\mc{B}\n\mc{X} = \mc{B}^{\#}-\mc{B}^{\#}\n\mc{A}\n\mc{A}^{\#}$. Now using the above results, we get  
\begin{eqnarray*}
(\mc{A}+\mc{B})\n\mc{X}\n(\mc{A}+\mc{B}) = \mc{A}\n\mc{X}\n\mc{A}+\mc{A}\n\mc{X}\n\mc{B}+\mc{B}\n\mc{X}\n\mc{A}+\mc{B}\n\mc{X}\n\mc{B} = \mc{A}+\mc{B}, 
\end{eqnarray*}
and 
\begin{eqnarray*}
\mc{X}\n(\mc{A}+\mc{B})\n\mc{X} = \mc{X}\n\mc{A}\n\mc{X}+\mc{X}\n\mc{B}\n\mc{X} = \mc{A}^{\#}-\mc{B}\n\mc{B}^{\#}\n\mc{A}^{\#}+\mc{B}^{\#}-\mc{B}^{\#}\n\mc{A}\n\mc{A}^{\#} = \mc{X},
\end{eqnarray*}
further, 
\begin{eqnarray*}
(\mc{A}+\mc{B})\n\mc{X} &=& (\mc{A}\n\mc{A}^{\#}-\mc{A}\n\mc{B}\n\mc{B}^{\#}\n\mc{A}^{\#}+\mc{A}\n\mc{B}^{\#}-\mc{A}\n\mc{B}^{\#}\n\mc{A}\n\mc{A}^{\#})\\
& &+(\mc{B}\n\mc{A}^{\#}-\mc{B}\n\mc{B}\n\mc{B}^{\#}\n\mc{A}^{\#}+\mc{B}\n\mc{B}^{\#}-\mc{B}\n\mc{B}^{\#}\n\mc{A}\n\mc{A}^{\#})\\
&=& \mc{A}\n\mc{A}^{\#}+\mc{B}\n\mc{B}^{\#}-\mc{B}\n\mc{B}^{\#}\n\mc{A}\n\mc{A}^{\#}\\
&=& (\mc{A}^{\#}\n\mc{A}-\mc{B}\n\mc{B}^{\#}\n\mc{A}^{\#}\n\mc{A}+\mc{B}^{\#}\n\mc{A}-\mc{B}^{\#}\n\mc{A}\n\mc{A}^{\#}\n\mc{A})\\
& &+(\mc{A}^{\#}\n\mc{B}-\mc{B}\n\mc{B}^{\#}\n\mc{A}^{\#}\n\mc{B}+\mc{B}^{\#}\n\mc{B}-\mc{B}^{\#}\n\mc{A}\n\mc{A}^{\#}\n\mc{B})\\
&=& \mc{X}\n\mc{A}+\mc{X}\n\mc{B}= \mc{X}\n(\mc{A}+\mc{B}).
\end{eqnarray*}
Therefore, $(\mc{A}+\mc{B})^{\#} = (I-\mc{B}\n\mc{B}^{\#})\n\mc{A}^{\#}+\mc{B}^{\#}\n(I-\mc{A}\n\mc{A}^{\#})$.
\end{proof}
\subsection{Computation of the Drazin inverse of tensors}
Now we present the main objective of this paper, which yields the  construction of the Drazin inverse of a tensor using other generalized inverses and  decomposition of tensors. One can find the matrix version of these results in \cite{bar,ben}. 

\begin{theorem}\label{mr1.17}
Let $\mc{A} \in \mathbb{C}^{\textbf{I}(N) \times \textbf{I}(N) }$  be a tensor with ind$(\mc{A})=k.$ Then 
\begin{equation*}
\mc{A}^{D} = \mc{A}^{k}\n(\mc{A}^{2k+1})^{\dag}\n\mc{A}^{k}
\end{equation*}
\end{theorem}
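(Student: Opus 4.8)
The plan is to set $\mc{X} = \mc{A}^{k}\n(\mc{A}^{2k+1})^{\dag}\n\mc{A}^{k}$, verify that $\mc{X}$ satisfies the three defining equations of Definition \ref{dz1.1}, and then invoke the uniqueness of the Drazin inverse (Theorem \ref{mr1.1}) to conclude $\mc{X}=\mc{A}^{D}$. Throughout I abbreviate $\mc{B}=\mc{A}^{2k+1}$ and use the four defining equations of the Moore--Penrose inverse (Definition \ref{defmpi}) for $\mc{B}^{\dg}$. The two structural facts I would isolate first are the factorizations $\mc{A}^{k}=\mc{B}\n\mc{U}$ and $\mc{A}^{k}=\mc{W}\n\mc{B}$ for suitable tensors $\mc{U},\mc{W}$. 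The first follows from $\mathfrak{R}(\mc{A}^{k})=\mathfrak{R}(\mc{A}^{2k+1})$ (Theorem \ref{weitm3.2}, since $2k+1\geq k$) together with Lemma \ref{range-stan}; the second follows by applying the same reasoning to $\mc{A}^{*}$, whose index is also $k$ by Theorem \ref{mr1.10}(a), and taking conjugate transposes, so that $\mathfrak{R}((\mc{A}^{k})^{*})=\mathfrak{R}((\mc{A}^{2k+1})^{*})$.

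With these in hand, equations (1) and (2) reduce to short computations. For equation (1), $\mc{A}^{k+1}\n\mc{X}=\mc{B}\n\mc{B}^{\dg}\n\mc{A}^{k}=\mc{B}\n\mc{B}^{\dg}\n\mc{B}\n\mc{U}=\mc{B}\n\mc{U}=\mc{A}^{k}$ by the first Penrose equation. Symmetrically, using $\mc{A}^{k}=\mc{W}\n\mc{B}$, I would record the mirror identity $\mc{X}\n\mc{A}^{k+1}=\mc{A}^{k}$. Equation (2), namely $\mc{X}\n\mc{A}\n\mc{X}=\mc{A}^{k}\n\mc{B}^{\dg}\n\mc{B}\n\mc{B}^{\dg}\n\mc{A}^{k}=\mc{A}^{k}\n\mc{B}^{\dg}\n\mc{A}^{k}=\mc{X}$, is immediate from the second Penrose equation. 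A useful by-product is the commutation $\mc{A}^{k}\n\mc{X}=\mc{X}\n\mc{A}^{k}$, obtained from $\mc{X}\n\mc{A}^{k+1}\n\mc{X}$ by collapsing the middle factor via equation (1) on one side and via its mirror on the other.

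The hard part will be the commutativity equation (3), $\mc{A}\n\mc{X}=\mc{X}\n\mc{A}$, since $(\mc{A}^{2k+1})^{\dg}$ does not commute with $\mc{A}$ in any direct way. My approach is to show that the two idempotents $\mc{E}_{1}=\mc{A}\n\mc{X}$ and $\mc{E}_{2}=\mc{X}\n\mc{A}$, each idempotent by equation (2), share the same range and the same null space. Using the commutation $\mc{A}^{k}\n\mc{X}=\mc{X}\n\mc{A}^{k}$ together with equation (1) and its mirror, I would prove $\mc{E}_{1}\n\mc{A}^{k}=\mc{A}^{k}=\mc{E}_{2}\n\mc{A}^{k}$, which gives $\mathfrak{R}(\mc{A}^{k})\subseteq\mathfrak{R}(\mc{E}_{i})$; the reverse inclusion is clear since each $\mc{E}_{i}$ carries $\mc{A}^{k}$ as an outer factor, so $\mathfrak{R}(\mc{E}_{1})=\mathfrak{R}(\mc{E}_{2})=\mathfrak{R}(\mc{A}^{k})$. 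For the null spaces, if $\mc{E}_{1}\n\mc{Y}=\mc{O}$ then pre-multiplying successively by $\mc{A}^{k-1}$ and $\mc{A}^{k+1}$ and using the commutation and equation (1) yields $\mc{A}^{2k}\n\mc{Y}=\mc{O}$, whence $\mc{A}^{k}\n\mc{Y}=\mc{O}$ because $\mc{N}(\mc{A}^{2k})=\mc{N}(\mc{A}^{k})$ (Theorem \ref{weitm3.2}); the reverse inclusion is again immediate, and the identical argument handles $\mc{E}_{2}$.

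Finally, I would close the commutativity using only these range and null-space equalities: the equality $\mathfrak{R}(\mc{E}_{1})=\mathfrak{R}(\mc{E}_{2})$ forces $\mc{E}_{1}\n\mc{E}_{2}=\mc{E}_{2}$ because $\mc{E}_{1}$ acts as the identity on its own range, while the equality $\mc{N}(\mc{E}_{1})=\mc{N}(\mc{E}_{2})$ forces $\mc{E}_{1}\n\mc{E}_{2}=\mc{E}_{1}$ because $\mc{E}_{2}\n\mc{Y}-\mc{Y}\in\mc{N}(\mc{E}_{2})=\mc{N}(\mc{E}_{1})$ for every $\mc{Y}$. Comparing the two expressions for $\mc{E}_{1}\n\mc{E}_{2}$ gives $\mc{E}_{1}=\mc{E}_{2}$, i.e. equation (3). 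Combining the three verified equations with Theorem \ref{mr1.1} then yields $\mc{X}=\mc{A}^{D}$, completing the proof. The main obstacle is the commutativity step; everything else is a direct application of the Penrose equations and the range stability of $\mc{A}^{k}$.
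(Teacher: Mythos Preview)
Your proof is correct, but it takes a genuinely different route from the paper's. Both arguments set $\mc{X}=\mc{A}^{k}\n(\mc{A}^{2k+1})^{\dag}\n\mc{A}^{k}$ and verify the three equations of Definition~\ref{dz1.1}; the divergence is in how the needed factorizations of $\mc{A}^{k}$ through $\mc{A}^{2k+1}$ are obtained, and consequently in how equation~(3) is handled. The paper simply uses the already-existing Drazin inverse to write $\mc{A}^{k}=\mc{A}^{2k+1}\n(\mc{A}^{D})^{k+1}=(\mc{A}^{D})^{k+1}\n\mc{A}^{2k+1}$ explicitly, which turns all three verifications---including the commutativity $\mc{A}\n\mc{X}=\mc{X}\n\mc{A}$---into direct chains of equalities using only the Penrose relations and Lemma~\ref{mr11.2}. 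You instead extract abstract factors $\mc{U},\mc{W}$ from the range stability of Theorem~\ref{weitm3.2} and Lemma~\ref{range-stan}, which makes equations~(1) and~(2) just as easy but forces you to work harder for~(3): your idempotent argument (matching $\mathfrak{R}(\mc{E}_{i})$ and $\mc{N}(\mc{E}_{i})$ with $\mathfrak{R}(\mc{A}^{k})$ and $\mc{N}(\mc{A}^{k})$) is sound and is the natural substitute when one does not want to invoke $\mc{A}^{D}$ inside the proof. The payoff of your route is that it is essentially self-contained---it does not lean on the prior existence of $\mc{A}^{D}$ except through the mild appeal to Theorem~\ref{mr1.10}(a) for $\mathrm{ind}(\mc{A}^{*})$, and even that could be replaced by a direct rank argument---so it doubles as an independent existence proof; the payoff of the paper's route is brevity and a completely computational verification of commutativity.
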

\begin{proof}
By using the definition of the Drazin inverse, we obtain
\begin{equation*}
\mc{A}^{k} = \mc{A}^{k+1}\n\mc{A}^{D} = \mc{A}^{k+2}\n(\mc{A}^{D})^{2} =\cdots = \mc{A}^{2k}\n(\mc{A}^{D})^{k} = \mc{A}^{2k+1}\n(\mc{A}^{D})^{k+1}.   
\end{equation*}
Let $\mc{X}=\mc{A}^{k}\n(\mc{A}^{2k+1})^{\dag}\n\mc{A}^{k}.$ Now 
\begin{eqnarray*}
  \mc{A}^{k+1}\n{X} &=& \mc{A}^{k+1}\n\mc{A}^{k}\n(\mc{A}^{2k+1})^{\dag}\n\mc{A}^{k} = \mc{A}^{2k+1}\n(\mc{A}^{2k+1})^{\dag}\n\mc{A}^{2k+1}\n(\mc{A}^{D})^{k+1}\\
  &=& \mc{A}^{2k+1}\n(\mc{A}^{D})^{k+1}
  = \mc{A}^{k+1}\n\mc{A}^{k}\n(\mc{A}^{D})^{k}\n\mc{A}^{D} = \mc{A}^{k+1}\n\mc{A}\n\mc{A}^{D}\n\mc{A}^{D}\\
  &=& \mc{A}^{k+1}\n\mc{A}^{D}\n\mc{A}\n\mc{A}^{D}
  = \mc{A}^{k+1}\n\mc{A}^{D} = \mc{A}^{k}, 
\end{eqnarray*}
\begin{eqnarray*}
   \mc{X}\n\mc{A}\n\mc{X}& =& \mc{A}^{k}\n(\mc{A}^{2k+1})^{\dag}\n\mc{A}^{k}\n\mc{A}\n\mc{A}^{k}\n(\mc{A}^{2k+1})^{\dag}\n\mc{A}^{k} \\
&=& \mc{A}^{k}\n(\mc{A}^{2k+1})^{\dag}\n\mc{A}^{2k+1}\n(\mc{A}^{2k+1})^{\dag}\n\mc{A}^{2k+1}\n(\mc{A}^{D})^{k+1}\\
&=& \mc{A}^{k}\n(\mc{A}^{2k+1})^{\dag}\n\mc{A}^{2k+1}\n(\mc{A}^{D})^{k+1} =\mc{A}^{k}\n(\mc{A}^{2k+1})^{\dag}\n\mc{A}^{k} = \mc{X},
\end{eqnarray*}
and 
\begin{eqnarray*}
 \mc{A}\n\mc{X} &=& \mc{A}\n\mc{A}^{k}\n(\mc{A}^{2k+1})^{\dag}\n\mc{A}^{k}= \mc{A}\n\mc{A}^{2k}\n(\mc{A}^{D})^{k}\n(\mc{A}^{2k+1})^{\dag}\n\mc{A}^{2k+1}\n(\mc{A}^{D})^{k+1}\\
 &=& (\mc{A}^{D})^{k}\n\mc{A}^{2k+1}\n(\mc{A}^{2k+1})^{\dag}\n\mc{A}^{2k+1}\n(\mc{A}^{D})^{k+1} 
= (\mc{A}^{D})^{k}\n\mc{A}^{2k+1}\n(\mc{A}^{D})^{k+1}\\
&=& (\mc{A}^{D})^{k+1}\n\mc{A}^{2k+1}\n(\mc{A}^{D})^{k}
= (\mc{A}^{D})^{k+1}\n\mc{A}^{2k+1}\n(\mc{A}^{2k+1})^{\dag}\n\mc{A}^{2k+1}\n(\mc{A}^{D})^{k}\\
&=& \mc{A}^{2k+1}\n(\mc{A}^{D})^{k+1}\n(\mc{A}^{2k+1})^{\dag}\n\mc{A}\n\mc{A}^{2k}\n(\mc{A}^{D})^{k} \\
&=& \mc{A}^{k}\n(\mc{A}^{2k+1})^{\dag}\n\mc{A}^{k+1} = \mc{X}\n\mc{A}.  
\end{eqnarray*}
Therefore, by Definition \ref{dz1.1}, we obtain  $\mc{A}^{D} =\mc{X}= \mc{A}^{k}\n(\mc{A}^{2k+1})^{\dag}\n\mc{A}^{k}.$
\end{proof}
Further, the Drazin inverse of a tensor is constructed within the framework of Moore-Penrose inverse, as follows.

\begin{theorem}\label{mr1.16}
Let $\mc{A} \in \mathbb{C}^{\textbf{I}(N) \times \textbf{I}(N) }$ be a tensor with  index $k.$ If $l$ is any integer with $l\geq k,$ then  $\mc{A}^{D} = \mc{X}^{\dag},$ where $\mc{X} = (\mc{A}^{l})^{\dag}\n\mc{A}^{2l+1}\n(\mc{A}^{l})^{\dag}.$
\end{theorem}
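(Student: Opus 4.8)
The plan is to prove that $\mc{A}^D$ is itself the Moore–Penrose inverse of $\mc{X}$, i.e. to verify that the pair $(\mc{X},\mc{A}^D)$ satisfies the four equations of Definition \ref{defmpi}, and then invoke the uniqueness of the Moore–Penrose inverse to conclude $\mc{X}^{\dg}=\mc{A}^D$. The first move is a structural reduction of $\mc{X}$. Writing $\mc{A}^{2l+1}=\mc{A}^{l}\n\mc{A}\n\mc{A}^{l}$ and setting the two Hermitian idempotent Moore–Penrose projectors of $\mc{A}^{l}$,
\begin{equation*}
\mc{P}=\mc{A}^{l}\n(\mc{A}^{l})^{\dg},\qquad \mc{Q}=(\mc{A}^{l})^{\dg}\n\mc{A}^{l},
\end{equation*}
I would rewrite $\mc{X}=(\mc{A}^{l})^{\dg}\n\mc{A}^{l}\n\mc{A}\n\mc{A}^{l}\n(\mc{A}^{l})^{\dg}=\mc{Q}\n\mc{A}\n\mc{P}$. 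This is the decomposition on which everything hinges.

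Next I would record three absorption identities. The first is the power identity $\mc{A}^{l+1}\n\mc{A}^{D}=\mc{A}^{l}=\mc{A}^{D}\n\mc{A}^{l+1}$ for $l\geq k$, which follows by iterating equation $(1)$ of Definition \ref{dz1.1} together with the commutativity $\mc{A}\n\mc{A}^{D}=\mc{A}^{D}\n\mc{A}$. The second and third are the left/right absorption of $\mc{P},\mc{Q}$ into $\mc{A}^{D}$, namely $\mc{P}\n\mc{A}^{D}=\mc{A}^{D}$ and $\mc{A}^{D}\n\mc{Q}=\mc{A}^{D}$. To get these I would first exhibit the factorizations $\mc{A}^{D}=\mc{A}^{l}\n(\mc{A}^{D})^{l+1}$ and $\mc{A}^{D}=(\mc{A}^{D})^{l+1}\n\mc{A}^{l}$ (each reduces, via Lemma \ref{mr11.2} and equation $(2)$ of Definition \ref{dz1.1}, to $\mc{A}^{D}\n\mc{A}\n\mc{A}^{D}=\mc{A}^{D}$), and then use $\mc{A}^{l}\n(\mc{A}^{l})^{\dg}\n\mc{A}^{l}=\mc{A}^{l}$ to cancel. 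Conceptually these identities just encode $\mathfrak{R}(\mc{A}^{D})=\mathfrak{R}(\mc{A}^{k})=\mathfrak{R}(\mc{A}^{l})$ and the dual statement for $\mc{A}^{*}$ (Theorems \ref{wei1.1} and \ref{weitm3.2}, with Lemma \ref{range-stan}), but the explicit factorizations make the argument purely algebraic.

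With these in hand the core computation is to show that the two cross products collapse exactly to the orthogonal projectors:
\begin{align*}
\mc{A}^{D}\n\mc{X}&=\mc{A}^{D}\n\mc{Q}\n\mc{A}\n\mc{P}=\mc{A}^{D}\n\mc{A}\n\mc{P}=\mc{A}\n\mc{A}^{D}\n\mc{A}^{l}\n(\mc{A}^{l})^{\dg}=\mc{A}^{l}\n(\mc{A}^{l})^{\dg}=\mc{P},\\
\mc{X}\n\mc{A}^{D}&=\mc{Q}\n\mc{A}\n\mc{P}\n\mc{A}^{D}=\mc{Q}\n\mc{A}\n\mc{A}^{D}=(\mc{A}^{l})^{\dg}\n\mc{A}^{l+1}\n\mc{A}^{D}=(\mc{A}^{l})^{\dg}\n\mc{A}^{l}=\mc{Q},
\end{align*}
using the absorption identities and $\mc{A}^{l+1}\n\mc{A}^{D}=\mc{A}^{l}$. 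From here the four Penrose equations are immediate: $\mc{X}\n\mc{A}^{D}\n\mc{X}=\mc{Q}\n\mc{X}=\mc{Q}\n\mc{Q}\n\mc{A}\n\mc{P}=\mc{X}$; $\mc{A}^{D}\n\mc{X}\n\mc{A}^{D}=\mc{P}\n\mc{A}^{D}=\mc{A}^{D}$; and the two symmetry conditions hold for free because $(\mc{X}\n\mc{A}^{D})^{*}=\mc{Q}^{*}=\mc{Q}=\mc{X}\n\mc{A}^{D}$ and $(\mc{A}^{D}\n\mc{X})^{*}=\mc{P}^{*}=\mc{P}=\mc{A}^{D}\n\mc{X}$. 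Uniqueness then yields $\mc{A}^{D}=\mc{X}^{\dg}$.

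The main obstacle is precisely the pair of symmetry (Hermitian) conditions, since the Drazin projector $\mc{A}\n\mc{A}^{D}$ is generally not self-adjoint; the whole proof is engineered so that $\mc{X}\n\mc{A}^{D}$ and $\mc{A}^{D}\n\mc{X}$ reduce to the genuinely Hermitian Moore–Penrose projectors $\mc{Q}$ and $\mc{P}$. Establishing that collapse — i.e. verifying the absorption identities that swallow the non-symmetric factor $\mc{A}\n\mc{A}^{D}$ into $\mc{P}$ and $\mc{Q}$ — is the step that requires care, and getting it right is what makes the Hermitian equations fall out automatically rather than needing a separate conjugate-transpose argument.
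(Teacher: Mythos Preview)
Your proof is correct and follows essentially the same route as the paper: both verify the four Moore--Penrose equations for the pair $(\mc{X},\mc{A}^{D})$, both rely on the factorizations $\mc{A}^{D}=\mc{A}^{l}\n(\mc{A}^{D})^{l+1}=(\mc{A}^{D})^{l+1}\n\mc{A}^{l}$ (your ``absorption identities'' are just these combined with $\mc{A}^{l}\n(\mc{A}^{l})^{\dg}\n\mc{A}^{l}=\mc{A}^{l}$), and both arrive at exactly the same key reductions $\mc{X}\n\mc{A}^{D}=(\mc{A}^{l})^{\dg}\n\mc{A}^{l}$ and $\mc{A}^{D}\n\mc{X}=\mc{A}^{l}\n(\mc{A}^{l})^{\dg}$. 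Your only real difference is organizational: you name the projectors $\mc{P},\mc{Q}$, rewrite $\mc{X}=\mc{Q}\n\mc{A}\n\mc{P}$, and compute the two cross products first so the four Penrose equations drop out in one line each, whereas the paper verifies the first two equations directly from the definition of $\mc{X}$ before turning to the cross products.
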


\begin{proof}
To claim the theorem, it is enough to show  $(\mc{A}^{D})^{\dag} = \mc{X}$. As $\mc{A}^{D} = \mc{A}^{D}\n\mc{A}\n\mc{A}^{D} = (\mc{A}^{D})^{2}\n\mc{A} = \mc{A}\n(\mc{A}^{D})^{2}$. Utilizing the fact $\mc{A}^{D} = \mc{A}^{D}\n\mc{A}\n\mc{A}^{D}$, $l$-times repetitively, we obtain
\begin{equation}\label{eq3.3}
  \mc{A}^{D} = (\mc{A}^{D})^{l+1}\n\mc{A}^{l} = \mc{A}^{l}\n(\mc{A}^{D})^{l+1}.  
\end{equation}
Now by using Eq. (\ref{eq3.3}), we have
\begin{eqnarray*}
 \mc{A}^{D}\n\mc{X}\n\mc{A}^{D} &=& \mc{A}^{D}\n(\mc{A}^{l})^{\dag}\n\mc{A}^{2l+1}\n(\mc{A}^{l})^{\dag}\n\mc{A}^{D}\\
 &=& (\mc{A}^{D})^{l+1}\n\mc{A}^{l}\n(\mc{A}^{l})^{\dag}\n\mc{A}^{l}\n\mc{A}\n\mc{A}^{l}\n(\mc{A}^{l})^{\dag}\n\mc{A}^{l}\n(\mc{A}^{D})^{l+1}\\
 &=& (\mc{A}^{D})^{l+1}\n\mc{A}^{l}\n\mc{A}\n\mc{A}^{l}\n(\mc{A}^{D})^{l+1}= \mc{A}^{D}\n\mc{A}\n\mc{A}^{D}= \mc{A}^{D},
\end{eqnarray*}
\begin{eqnarray*}
\mc{X}\n\mc{A}^{D}\n\mc{X} &=& (\mc{A}^{l})^{\dag}\n\mc{A}^{2l+1}\n(\mc{A}^{l})^{\dag}\n\mc{A}^{D}\n(\mc{A}^{l})^{\dag}\n\mc{A}^{2l+1}\n(\mc{A}^{l})^{\dag}\\
 &=& (\mc{A}^{l})^{\dag}\n\mc{A}^{l+1}\n\mc{A}^{l}\n(\mc{A}^{l})^{\dag}\n\mc{A}^{l}\n(\mc{A}^{D})^{l+1}\n(\mc{A}^{l})^{\dag}\n\mc{A}^{2l+1}\n(\mc{A}^{l})^{\dag}\\
 &=& (\mc{A}^{l})^{\dag}\n\mc{A}^{l+1}\n\mc{A}^{D}\n(\mc{A}^{l})^{\dag}\n\mc{A}^{2l+1}\n(\mc{A}^{l})^{\dag}\\
 &=& (\mc{A}^{l})^{\dag}\n\mc{A}\n\mc{A}^{D}\n\mc{A}^{l}\n(\mc{A}^{l})^{\dag}\n\mc{A}^{l}\n\mc{A}^{l+1}\n(\mc{A}^{l})^{\dag}\\ 
 &=& (\mc{A}^{l})^{\dag}\n\mc{A}\n\mc{A}^{D}\n\mc{A}^{l}\n\mc{A}^{l+1}\n(\mc{A}^{l})^{\dag}= \mc{X}.
\end{eqnarray*}
Since 
\begin{eqnarray*}
   \mc{X}\n\mc{A}^{D}& =& (\mc{A}^{l})^{\dag}\n\mc{A}^{2l+1}\n(\mc{A}^{l})^{\dag}\n\mc{A}^{D} = (\mc{A}^{l})^{\dag}\n\mc{A}^{l+1}\n\mc{A}^{l}\n(\mc{A}^{l})^{\dag}\n\mc{A}^{l}\n(\mc{A}^{D})^{l+1}\\
   &=& 
(\mc{A}^{l})^{\dag}\n\mc{A}^{l+1}\n\mc{A}^{l}\n(\mc{A}^{D})^{l+1} = (\mc{A}^{l})^{\dag}\n\mc{A}^{l+1}\n\mc{A}^{D} = (\mc{A}^{l})^{\dag}\n\mc{A}^{l},
\end{eqnarray*}
and $\mc{A}^{D}\n\mc{X} = \mc{A}^{D}\n(\mc{A}^{l})^{\dag}\n\mc{A}^{2l+1}\n(\mc{A}^{l})^{\dag}=
 (\mc{A}^{D})^{l+1}\n\mc{A}^{l}\n(\mc{A}^{l})^{\dag}\n\mc{A}^{l}\n\mc{A}^{l+1}\n(\mc{A}^{l})^{\dag} = (\mc{A}^{D})^{l+1}\n\mc{A}^{l}\n\mc{A}^{l+1}\n(\mc{A}^{l})^{\dag}= \mc{A}^{D}\n\mc{A}^{l+1}\n(\mc{A}^{l})^{\dag} = \mc{A}^{l}\n(\mc{A}^{l})^{\dag}.$ Therefore,    $(\mc{X}\n\mc{A}^{D})^{*} = ((\mc{A}^{l})^{\dag}\n\mc{A}^{l})^{*} = (\mc{A}^{l})^{\dag}\n\mc{A}^{l} = \mc{X}\n\mc{A}^{D},$ and  $(\mc{A}^{D}\n\mc{X})^{*} = (\mc{A}^{l}\n(\mc{A}^{l})^{\dag})^{*} = \mc{A}^{l}\n(\mc{A}^{l})^{\dag} = \mc{A}^{D}\n\mc{X}.$ Hence $(\mc{A}^{D})^{\dag} = \mc{X}$. 
\end{proof}

Using method of induction and definition of the Drazin inverse of the tensor, we present another characterization for computation of the Drazin inverse of tensors, as follows.  
\begin{preposition}
\label{mrr1.01}
Let $\mc{A} \in \mathbb{C}^{\textbf{I}(N) \times \textbf{I}(N) }$ and ind$(\mc{A})=k.$ If there exists a tensor $\mc{X} \in \mathbb{C}^{\textbf{I}(N) \times \textbf{I}(N) }$ such that $\mc{A}\n\mc{X}^{k+1} = \mc{X}^{k}$ and $\mc{X}\n\mc{A}^{k+1} = \mc{A}^{k}$, then for  $m\in \mathbb{N},$ the followings hold
\begin{enumerate}
    \item[(a)] $\mc{A}^{k} = \mc{X}^{m}\n\mc{A}^{k+m}$,
    \item[(b)] $\mc{X}^{k}\n\mc{A}^{k} = \mc{X}^{k+m}\n\mc{A}^{k+m}$,
    \item[(c)] $\mc{A}^{D} = \mc{X}^{k+1}\n\mc{A}^{k}.$
\end{enumerate}
\end{preposition}
We now discuss the method of construction of the group inverse of a tensor using a $\{1\}$-inverse of $\mc{A}$.
\begin{theorem}\label{mr1.15}
Let $\mc{A} \in \mathbb{C}^{\textbf{I}(N) \times \textbf{I}(N) }$  and ind$(\mc{A})=1.$ Then $\mc{A}^{\#} = \mc{A}\n(\mc{A}^{3})^{(1)}\n\mc{A},$
where $(\mc{A}^{3})^{(1)}$ is an arbitrary $\{1\}$-inverse of $\mc{A}^{3}.$
\end{theorem}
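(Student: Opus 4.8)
The plan is to show that the tensor $\mc{X} = \mc{A}\n(\mc{A}^{3})^{(1)}\n\mc{A}$ satisfies the three defining equations of the group inverse, i.e.\ Definition \ref{dz1.1} with $k=1$, and then to invoke the uniqueness of the Drazin (hence group) inverse from Theorem \ref{mr1.1} to conclude $\mc{X}=\mc{A}^{\#}$. Write $\mc{Y}=(\mc{A}^{3})^{(1)}$, so that by definition $\mc{A}^{3}\n\mc{Y}\n\mc{A}^{3}=\mc{A}^{3}$. Since $\mathrm{ind}(\mc{A})=1$ the group inverse $\mc{A}^{\#}$ exists, and I would use it only as an auxiliary device to rewrite powers of $\mc{A}$; it does not appear in the final expression.

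First I would record the identities that convert the $\{1\}$-inverse relation on $\mc{A}^{3}$ into one on $\mc{A}$. From the group-inverse equations $\mc{A}^{2}\n\mc{A}^{\#}=\mc{A}$ and $\mc{A}\n\mc{A}^{\#}=\mc{A}^{\#}\n\mc{A}$ one gets $\mc{A}^{3}\n(\mc{A}^{\#})^{2}=\mc{A}$ and $(\mc{A}^{\#})^{2}\n\mc{A}^{3}=\mc{A}$. Multiplying $\mc{A}^{3}\n\mc{Y}\n\mc{A}^{3}=\mc{A}^{3}$ on the right, respectively the left, by $(\mc{A}^{\#})^{2}$ then yields the two reduction identities
\begin{equation*}
\mc{A}^{3}\n\mc{Y}\n\mc{A}=\mc{A} \quad\text{and}\quad \mc{A}\n\mc{Y}\n\mc{A}^{3}=\mc{A}.
\end{equation*}
I would also note $\mc{A}^{2}=\mc{A}^{\#}\n\mc{A}^{3}=\mc{A}^{3}\n\mc{A}^{\#}$, both of which follow from $\mc{A}\n\mc{A}^{\#}\n\mc{A}=\mc{A}$.

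With these in hand the three verifications are short. Equation $(1)$ is $\mc{A}^{2}\n\mc{X}=\mc{A}^{3}\n\mc{Y}\n\mc{A}=\mc{A}$; equation $(2)$ is $\mc{X}\n\mc{A}\n\mc{X}=\mc{A}\n\mc{Y}\n\mc{A}^{3}\n\mc{Y}\n\mc{A}=\mc{A}\n\mc{Y}\n\mc{A}=\mc{X}$, again using a reduction identity. The genuine obstacle is the commutativity equation $(3)$, since $\mc{Y}$ is an arbitrary $\{1\}$-inverse and commutes with nothing a priori. I expect to clear it by reducing both products to the same idempotent: inserting $\mc{A}^{2}=\mc{A}^{\#}\n\mc{A}^{3}$ gives $\mc{A}\n\mc{X}=\mc{A}^{2}\n\mc{Y}\n\mc{A}=\mc{A}^{\#}\n(\mc{A}^{3}\n\mc{Y}\n\mc{A})=\mc{A}\n\mc{A}^{\#}$, while inserting $\mc{A}^{2}=\mc{A}^{3}\n\mc{A}^{\#}$ gives $\mc{X}\n\mc{A}=\mc{A}\n\mc{Y}\n\mc{A}^{2}=(\mc{A}\n\mc{Y}\n\mc{A}^{3})\n\mc{A}^{\#}=\mc{A}\n\mc{A}^{\#}$. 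Since both sides equal $\mc{A}\n\mc{A}^{\#}$, equation $(3)$ holds, and uniqueness then forces $\mc{X}=\mc{A}^{\#}$, which is the claim.
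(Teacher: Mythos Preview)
Your proof is correct and follows essentially the same approach as the paper: both verify the three defining group-inverse equations for $\mc{X}=\mc{A}\n(\mc{A}^{3})^{(1)}\n\mc{A}$ by using the already-existing $\mc{A}^{\#}$ as an auxiliary to rewrite $\mc{A}$ and $\mc{A}^{2}$ in terms of $\mc{A}^{3}$, so that the $\{1\}$-inverse relation $\mc{A}^{3}\n\mc{Y}\n\mc{A}^{3}=\mc{A}^{3}$ can be invoked. The only cosmetic difference is that you isolate the reduction identities $\mc{A}^{3}\n\mc{Y}\n\mc{A}=\mc{A}$ and $\mc{A}\n\mc{Y}\n\mc{A}^{3}=\mc{A}$ once at the start, whereas the paper performs the equivalent manipulations inline in each of the three verifications.
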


\begin{proof}
Let $\mc{X} = \mc{A}\n(\mc{A}^{3})^{(1)}\n\mc{A}$. Since  
\begin{eqnarray*}
\mc{A}\n\mc{X}\n\mc{A} &=& \mc{A}^2\n(\mc{A}^{3})^{(1)}\n\mc{A}^2= \mc{A}\n\mc{A}\n\mc{A}^{\#}\n\mc{A}\n(\mc{A}^{3})^{(1)}\n\mc{A}\n\mc{A}^{\#}\n\mc{A}\n\mc{A}\\
&=& \mc{A}^{\#}\n\mc{A}^{3}\n(\mc{A}^{3})^{(1)}\n\mc{A}^{3}\n\mc{A}^{\#}
= \mc{A}^{\#}\n\mc{A}^{3}\n\mc{A}^{\#}=\mc{A}, 
\end{eqnarray*}
\begin{eqnarray*}
\mc{X}\n\mc{A}\n\mc{X} &=& \mc{A}\n(\mc{A}^{3})^{(1)}\n\mc{A}^3\n(\mc{A}^{3})^{(1)}\n\mc{A}=(\mc{A}^{\#})^{2}\n\mc{A}^{3}\n(\mc{A}^{3})^{(1)}\n\mc{A}^{3}\n(\mc{A}^{3})^{(1)}\n\mc{A}\\
&=& (\mc{A}^{\#})^{2}\n\mc{A}^{3}\n(\mc{A}^{3})^{(1)}\n\mc{A}=\mc{A}\n(\mc{A}^{3})^{(1)}\n\mc{A}=\mc{X},
\end{eqnarray*}
and 
\begin{eqnarray*}
\mc{A}\n\mc{X} &=& \mc{A}\n\mc{A}\n(\mc{A}^{3})^{(1)}\n\mc{A}=  \mc{A}^{\#}\n\mc{A}^3\n(\mc{A}^{3})^{(1)}\n\mc{A}^3\n\left(\mc{A}^{\#}\right)^2=(\mc{A}^{\#})^{2}\n\mc{A}^{3}\n\mc{A}^{\#}\\
&=& (\mc{A}^{\#})^{2}\n\mc{A}^{3}\n(\mc{A}^{3})^{(1)}\n\mc{A}^{3}\n\mc{A}^{\#}= \mc{A}\n(\mc{A}^{3})^{(1)}\n\mc{A}\n\mc{A}= \mc{X}\n\mc{A}.
\end{eqnarray*}

Thus $\mc{X}$ is the group inverse of $\mc{A}.$ Hence  $\mc{A}^{\#} = \mc{A}\n(\mc{A}^{3})^{(1)}\n\mc{A}.$
\end{proof}
For computation of the Drazin inverse, the index-$1$ nilpotent decomposition plays an important role, which we discuss in the next theorem. 
\begin{theorem}\label{mr1.11}
Let $\mc{A} \in \mathbb{C}^{\textbf{I}(N) \times \textbf{I}(N) }.$  Then $\mc{A}$ has a unique decomposition $\mc{A} = \mc{B}+\mc{N},$
such that $\mc{B}$ is of index $1$, $\mc{N}$ is nilpotent, and $\mc{N}\n\mc{B} = \mc{B}\n\mc{N} = \mc{O}.  $
Further, $  (\mc{A}^{D})^{\#}=\mc{B}.$
\end{theorem}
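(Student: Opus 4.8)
The plan is to produce the decomposition explicitly and then show it is the only one. For existence I would set $\mc{B} = \mc{A}^{2}\n\mc{A}^{D}$ and $\mc{N} = \mc{A}-\mc{B} = \mc{A}-\mc{A}^{2}\n\mc{A}^{D}$, so that $\mc{A}=\mc{B}+\mc{N}$ holds by construction. Writing $k=\mathrm{ind}(\mc{A})$ and using freely that $\mc{A}$ and $\mc{A}^{D}$ commute, together with Lemma \ref{mr11.2} in the form $\mc{A}^{2}\n(\mc{A}^{D})^{2}=\mc{A}\n\mc{A}^{D}$ and the defining identity $\mc{A}^{k+1}\n\mc{A}^{D}=\mc{A}^{k}$, I would verify the three required properties. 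First, $\mc{B}$ is group invertible of index one with $\mc{B}^{\#}=\mc{A}^{D}$: this is immediate from Theorem \ref{mr1.2}(b), which states $(\mc{A}^{D})^{\#}=\mc{A}^{2}\n\mc{A}^{D}=\mc{B}$, so $\mc{B}$ is the group inverse of $\mc{A}^{D}$ and is therefore itself group invertible with $\mc{B}^{\#}=\mc{A}^{D}$ (alternatively one checks the three group-inverse equations of Definition \ref{defmpi} directly). Second, $\mc{N}$ is nilpotent, since $\mc{N}^{k}=\mc{A}^{k}-\mc{A}^{k+1}\n\mc{A}^{D}=\mc{A}^{k}-\mc{A}^{k}=\mc{O}$. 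Third, $\mc{B}\n\mc{N}=\mc{O}$ by the short computation $\mc{B}\n\mc{N}=\mc{A}^{3}\n\mc{A}^{D}-\mc{A}^{4}\n(\mc{A}^{D})^{2}=\mc{A}^{3}\n\mc{A}^{D}-\mc{A}^{3}\n\mc{A}^{D}=\mc{O}$, and $\mc{N}\n\mc{B}=\mc{O}$ follows identically because every tensor involved is built from the commuting pair $\mc{A},\mc{A}^{D}$.

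For uniqueness, suppose $\mc{A}=\mc{B}+\mc{N}$ is any decomposition with $\mc{B}$ group invertible (group inverse $\mc{B}^{\#}$), $\mc{N}$ nilpotent with $\mc{N}^{j}=\mc{O}$, and $\mc{B}\n\mc{N}=\mc{N}\n\mc{B}=\mc{O}$. The key step is to show that such data force $\mc{B}^{\#}$ to be the Drazin inverse of $\mc{A}$. First I would deduce $\mc{B}^{\#}\n\mc{N}=\mc{N}\n\mc{B}^{\#}=\mc{O}$ from $\mc{B}\n\mc{N}=\mc{O}$, using $\mc{B}^{\#}=(\mc{B}^{\#})^{2}\n\mc{B}$ and $\mc{B}\n\mc{B}^{\#}=\mc{B}^{\#}\n\mc{B}$, whence $\mc{B}^{\#}\n\mc{N}=(\mc{B}^{\#})^{2}\n\mc{B}\n\mc{N}=\mc{O}$. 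Since the cross terms vanish, $\mc{A}^{m}=(\mc{B}+\mc{N})^{m}=\mc{B}^{m}+\mc{N}^{m}$ for every $m$, and in particular $\mc{A}^{j}=\mc{B}^{j}$ and $\mc{A}^{j+1}=\mc{B}^{j+1}$. Taking $\mc{X}=\mc{B}^{\#}$, I would then verify the equations of Definition \ref{dz1.1}: $\mc{A}^{j+1}\n\mc{X}=\mc{B}^{j+1}\n\mc{B}^{\#}=\mc{B}^{j}=\mc{A}^{j}$; $\mc{X}\n\mc{A}\n\mc{X}=\mc{B}^{\#}\n\mc{B}\n\mc{B}^{\#}+\mc{B}^{\#}\n\mc{N}\n\mc{B}^{\#}=\mc{B}^{\#}=\mc{X}$; and $\mc{A}\n\mc{X}=\mc{B}\n\mc{B}^{\#}=\mc{B}^{\#}\n\mc{B}=\mc{X}\n\mc{A}$. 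Thus $\mc{B}^{\#}$ satisfies the defining equations, so by the uniqueness of the Drazin inverse (Theorem \ref{mr1.1}) we conclude $\mc{A}^{D}=\mc{B}^{\#}$.

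Finally, I would recover $\mc{B}$ and $\mc{N}$ from $\mc{A}$ alone. Because $\mc{B}$ is group invertible, the group inverse is an involution, so $\mc{B}=(\mc{B}^{\#})^{\#}=(\mc{A}^{D})^{\#}$, a tensor depending only on $\mc{A}$; then $\mc{N}=\mc{A}-\mc{B}$ is likewise determined. This establishes uniqueness and, at the same time, the final assertion $(\mc{A}^{D})^{\#}=\mc{B}$ (consistent with Theorem \ref{mr1.2}(b), which identifies this quantity as $\mc{A}^{2}\n\mc{A}^{D}$). I expect the uniqueness half to be the main obstacle: one must first secure $\mc{B}^{\#}\n\mc{N}=\mc{N}\n\mc{B}^{\#}=\mc{O}$ and the vanishing of the binomial cross terms before the Drazin equations can be checked, and one may invoke the involution property $(\mc{B}^{\#})^{\#}=\mc{B}$ only after $\mc{B}$ has been confirmed group invertible.
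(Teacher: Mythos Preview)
Your proposal is correct and follows essentially the same route as the paper: you set $\mc{B}=\mc{A}^{2}\n\mc{A}^{D}=(\mc{A}^{D})^{\#}$, verify the three properties of the decomposition, and for uniqueness show that any admissible pair $(\mc{B},\mc{N})$ forces $\mc{B}^{\#}=\mc{A}^{D}$ (hence $\mc{B}=(\mc{A}^{D})^{\#}$). The only cosmetic difference is the order of presentation---the paper proves the implication ``decomposition $\Rightarrow$ $\mc{A}^{D}=\mc{B}^{\#}$'' first and existence second, whereas you do existence first---but the computations and the appeals to Theorem~\ref{mr1.2}(b), Lemma~\ref{mr11.2}, and the uniqueness of the Drazin/group inverse are the same.
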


\begin{proof}
First we prove, if the tensor $\mc{A}$ has a decomposition, $\mc{A} = \mc{B}+\mc{N},$
such that $\mc{B}$ is of index $1$, $\mc{N}$ is nilpotent, and $\mc{N}\n\mc{B} = \mc{B}\n\mc{N} =\mc{O} $ then $\mc{A}^D=\mc{B}^{\#}.$ Subsequently, the uniqueness follows from the uniqueness of the group inverse. Since $\mc{B}^{\#} = \mc{B}\n(\mc{B}^{\#})^{2} = (\mc{B}^{\#})^{2}\n\mc{B},$ so post-multiplying by $\mc{N},$ we obtain $\mc{B}^{\#}\n\mc{N} = \mc{O}$. Similarly, $\mc{N}\n\mc{B}^{\#} =\mc{O}$. Further, since $\mc{N}$ nilpotent tensor, then there exists a positive integer $k$ such that $\mc{N}^k=\mc{O}.$ This yields $\mc{A}^{k} = (\mc{B}+\mc{N})^{k} = \mc{B}^{k}+\mc{N}^{k}=\mc{B}^k.$ Now using these results, we get $ \mc{A}^{k+1}\n\mc{B}^{\#} = \mc{B}^{k+1}\n\mc{B}^{\#} = \mc{B}^{k}=\mc{A}^k,$ $\mc{B}^{\#}\n\mc{A}\n\mc{B}^{\#}=\mc{B}\n(\mc{B}^{\#})^{2} =\mc{B}^{\#},$ and $\mc{A}\n\mc{B}^{\#} =\mc{B}\n\mc{B}^{\#} = \mc{B}^{\#}\n\mc{A}.$ Thus $\mc{B}^{\#}$  is the Drazin inverse of $\mc{A}.$ Hence $\mc{B}=\left(\mc{A}^{D}\right)^{\#}.$ Now consider $\mc{B}=\left(\mc{A}^{D}\right)^{\#},$ and prove the decomposition $\mc{A}=\mc{B}+\mc{N}$ satisfies the required conditions.  
In view of the Theorem \ref{mr1.2} $(b)$ we obtain $\mc{N}\n\mc{B} = \mc{A}\n\mc{A}^2\n\mc{A}^{D}-\mc{A}^2\n\mc{A}^{D}\n\mc{A}^2\n\mc{A}^D=\mc{A}^3\n\mc{A}^D- \mc{A}^{3}\n\mc{A}^{D}=\mc{O}.$ Similarly, one can show $\mc{B}\n\mc{N}=\mc{O}.$ Now to claim $\mc{N}$ is nilpotent, we consider ind $(\mc{A})=k.$ Then 
\begin{equation*}
\mc{A}^{k} = \mc{B}^{k}+\mc{N}^{k} = ((\mc{A}^{D})^{\#})^{k}+\mc{N}^{k} = \mc{A}^{2k}\n(\mc{A}^{D})^{k}+\mc{N}^{k} = \mc{A}^{k}+\mc{N}^{k}.
\end{equation*}
Thus $\mc{N}^{k} = \mc{O}.$ Hence completes the proof.
\end{proof}


Using the definition of reshape operation of a tensor $\mc{A} = \mc{F}\n\mc{H}\n\mc{G} \in \mathbb{C}^{\textbf{I}(M) \times \textbf{J}(N)}$, we obtain  $rshrank(\mc{A})\leq rshrank(\mc{H})$. On the other hand, suppose $\mc{G}_{r}$ is the right inverse of $\mc{G}$ and $\mc{F}_{l}$ is the left inverse of $\mc{F}$. Now pre-multiplying $\mc{F}_{l}$ and post-multiplying $\mc{G}_{r}$ to the tensor $\mc{A}$,  we obtain $\mc{H} = \mc{F}_{l}\m\mc{A}\n\mc{G}_{r}$. This implies $rshrank(\mc{H})\leq rshrank(\mc{A})$. 
As a consequence, this is stated in the next result.
\begin{theorem}\label{mr2.11}
 Let $\mc{A} \in \mathbb{C}^{\textbf{I}(M) \times \textbf{J}(N)}$. Suppose $\mc{A} = \mc{F}\n\mc{H}\n\mc{G}$, where $\mc{F}\in \mathbb{C}^{\textbf{I}(M) \times \textbf{K}(L)}$ and $\mc{G}\in \mathbb{C}^{\textbf{K}(L)\times \textbf{J}(N)}.$ If $rshrank(\mc{F})=\textbf{K}(L)=rshrank(\mc{G}),$ then $rshrank(\mc{A}) = rshrank(\mc{H})$.
\end{theorem}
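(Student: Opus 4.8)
The plan is to push the factorization $\mc{A}=\mc{F}\n\mc{H}\n\mc{G}$ down to the matrix level by means of the reshape map and then to appeal to the classical behaviour of the rank of a matrix product. Recall that $rshrank(\mc{X})=rank(rsh(\mc{X}))$ and that $rsh$ is a bijection sending each Einstein product to the ordinary product of the reshaped factors. Writing $A=rsh(\mc{A})$, $F=rsh(\mc{F})$, $H=rsh(\mc{H})$ and $G=rsh(\mc{G})$, the hypothesis becomes the matrix identity $A=FHG$ with $F\in\mathbb{C}^{\mathfrak{M}\times\mathfrak{K}}$, $H\in\mathbb{C}^{\mathfrak{K}\times\mathfrak{K}}$ and $G\in\mathbb{C}^{\mathfrak{K}\times\mathfrak{N}}$, where $\mathfrak{K}=\prod_{l=1}^{L}K_l$. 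The goal then splits into the two inequalities $rshrank(\mc{A})\le rshrank(\mc{H})$ and $rshrank(\mc{H})\le rshrank(\mc{A})$.

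The first inequality requires no rank hypothesis: from $A=FHG$ and submultiplicativity of the matrix rank, $rank(A)\le rank(H)$, that is, $rshrank(\mc{A})\le rshrank(\mc{H})$. For the reverse inequality I would invoke the full-rank assumptions. The condition $rshrank(\mc{F})=\textbf{K}(L)$ says $rank(F)=\mathfrak{K}$, so $F$ has full column rank and therefore $\mc{F}$ is left invertible, say $\mc{F}_{l}\m\mc{F}=\mc{I}$; likewise $rshrank(\mc{G})=\textbf{K}(L)$ forces $G$ to have full row rank, so $\mc{G}$ has a right inverse $\mc{G}_{r}$ with $\mc{G}\n\mc{G}_{r}=\mc{I}$. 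Multiplying on the left by $\mc{F}_{l}$ and on the right by $\mc{G}_{r}$ and using associativity of the Einstein product (the appropriate contraction lengths being understood),
\begin{equation*}
\mc{F}_{l}\m\mc{A}\n\mc{G}_{r}=\mc{F}_{l}\m\mc{F}\n\mc{H}\n\mc{G}\n\mc{G}_{r}=\mc{H}.
\end{equation*}
Reshaping this identity and applying submultiplicativity once more gives $rshrank(\mc{H})=rank(F_{l}AG_{r})\le rank(A)=rshrank(\mc{A})$. Together the two inequalities yield $rshrank(\mc{A})=rshrank(\mc{H})$.

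I expect two points to carry the real content. First, one has to be certain that $rsh$ genuinely converts the various (index-length dependent) Einstein products $\n$, $\m$, $\kl$ occurring here into honest matrix products of the reshaped factors, with the contracted index blocks matching correctly; this intertwining property of the reshape map, recorded in \cite{stan,behera18}, is precisely what licenses the passage from $\mc{A}=\mc{F}\n\mc{H}\n\mc{G}$ to $A=FHG$ and from the tensor reconstruction back to a matrix rank inequality. Second, one must justify that $\mc{F}$ having full reshape rank is equivalent to left invertibility and $\mc{G}$ having full reshape rank to right invertibility, so that $\mc{F}_{l}$ and $\mc{G}_{r}$ actually exist; this is the tensor analogue of the matrix fact that full column (resp.\ row) rank is equivalent to the existence of a left (resp.\ right) inverse, and it follows from the full rank decomposition theory of \cite{behera18} (cf.\ Theorem \ref{49}). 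Once these facts are secured, everything else is the routine matrix argument rehearsed above.
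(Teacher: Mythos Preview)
Your proof is correct and follows essentially the same route as the paper: obtain $rshrank(\mc{A})\le rshrank(\mc{H})$ from submultiplicativity of rank under products, then use the left inverse $\mc{F}_l$ of $\mc{F}$ and the right inverse $\mc{G}_r$ of $\mc{G}$ to write $\mc{H}=\mc{F}_l\m\mc{A}\n\mc{G}_r$ and deduce the reverse inequality. The paper presents this argument in a single informal paragraph preceding the theorem and simply assumes the existence of $\mc{F}_l$ and $\mc{G}_r$; your version is more explicit about why the full reshape-rank hypotheses furnish these one-sided inverses, but the core idea is identical.
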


One of its application for computing the Moore-Penrose inverse is already presented in the recent literature \cite{behera18}. The computation of the group inverse via full rank tensor factorization is discussed in the next theorem.

\begin{theorem}\label{mr1.13}
Suppose $\mc{A} \in \mathbb{C}^{\textbf{I}(N) \times \textbf{I}(N) }$ has the full-rank factorization, $\mc{A} = \mc{F}\n\mc{G},$ where $\mc{F}\in \mathbb{C}^{\textbf{I}(N) \times K_1 \times\cdots\times K_L}$  and $\mc{G}\in \mathbb{C}^{K_1\times\cdots\times K_L \times \textbf{I}(N)}.$  Then $\mc{A}$ is group invertible if and only if $\mc{G}\n\mc{F}$ is nonsingular. Further, we have  $ \mc{A}^{\#} = \mc{F}\n(\mc{G}\n\mc{F})^{-2}\n\mc{G}.
$
\end{theorem}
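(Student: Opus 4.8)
The plan is to work throughout with the square core tensor $\mc{P} := \mc{G}\n\mc{F} \in \mathbb{C}^{(K_1\times\cdots\times K_L)\times(K_1\times\cdots\times K_L)}$ and the factored power $\mc{A}^2 = \mc{F}\n\mc{G}\n\mc{F}\n\mc{G} = \mc{F}\n\mc{P}\n\mc{G}$. Recall from the full-rank factorization (Theorem~\ref{49}) that $\mc{F}$ is left invertible, $\mc{G}$ is right invertible, and $rshrank(\mc{F}) = rshrank(\mc{G}) = rshrank(\mc{A}) = r = K_1\times\cdots\times K_L$; these are exactly the hypotheses needed to invoke Theorem~\ref{mr2.11}.

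For the sufficiency direction and the formula, I would assume $\mc{P}$ nonsingular, put $\mc{X} := \mc{F}\n\mc{P}^{-2}\n\mc{G}$, and verify the three defining equations of the group inverse (Definition~\ref{dz1.1} with $k=1$). The verification rests on the two collapsing identities $\mc{A}\n\mc{X} = \mc{F}\n\mc{P}\n\mc{P}^{-2}\n\mc{G} = \mc{F}\n\mc{P}^{-1}\n\mc{G}$ and $\mc{X}\n\mc{A} = \mc{F}\n\mc{P}^{-2}\n\mc{P}\n\mc{G} = \mc{F}\n\mc{P}^{-1}\n\mc{G}$, which already yield the commutation equation $\mc{A}\n\mc{X} = \mc{X}\n\mc{A}$. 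The same bookkeeping of powers of $\mc{P}$ sandwiched between $\mc{F}$ and $\mc{G}$ gives $\mc{A}^2\n\mc{X} = \mc{F}\n\mc{P}\n\mc{P}\n\mc{P}^{-2}\n\mc{G} = \mc{A}$ and $\mc{X}\n\mc{A}\n\mc{X} = \mc{F}\n\mc{P}^{-2}\n\mc{P}\n\mc{P}^{-1}\n\mc{G} = \mc{X}$. By uniqueness of the group inverse (Theorem~\ref{mr1.1}), this simultaneously shows that $\mc{A}$ is group invertible and that $\mc{A}^{\#} = \mc{F}\n(\mc{G}\n\mc{F})^{-2}\n\mc{G}$.

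For the necessity direction I would argue by ranks, mirroring the matrix proof. If $\mc{A}$ is group invertible then $ind(\mc{A}) = 1$, so Theorem~\ref{weitm3.2} gives $\mathfrak{R}(\mc{A}^2) = \mathfrak{R}(\mc{A})$. Because the reshape map is a linear bijection that is multiplicative with respect to the Einstein product, it carries the range space $\mathfrak{R}(\mc{A})$ onto the column space of $rsh(\mc{A})$; hence the range equality forces $rshrank(\mc{A}^2) = rshrank(\mc{A}) = r$. Applying Theorem~\ref{mr2.11} to the factorization $\mc{A}^2 = \mc{F}\n\mc{P}\n\mc{G}$ (legitimate since $rshrank(\mc{F}) = \textbf{K}(L) = rshrank(\mc{G})$) identifies $rshrank(\mc{A}^2) = rshrank(\mc{P})$, so $rshrank(\mc{P}) = r$. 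As $\mc{P}$ reshapes to an $r\times r$ matrix, full rank means it is nonsingular, which closes the equivalence.

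I expect the necessity direction to be the main obstacle, and within it the step that converts the index-one hypothesis $\mathfrak{R}(\mc{A}) = \mathfrak{R}(\mc{A}^2)$ into the numerical rank equality $rshrank(\mc{A}) = rshrank(\mc{A}^2)$ and then isolates the rank of the small core $\mc{P}$. This is precisely where the compatibility of reshaping with the Einstein product and Theorem~\ref{mr2.11} carry the argument; the sufficiency direction, by contrast, is a purely formal telescoping of $\mc{P}$ against $\mc{P}^{-1}$ and requires no structural input beyond associativity of $\n$.
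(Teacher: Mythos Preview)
Your proposal is correct and follows essentially the same route as the paper: both invoke Theorem~\ref{mr2.11} on the factorization $\mc{A}^2 = \mc{F}\n(\mc{G}\n\mc{F})\n\mc{G}$ to identify $rshrank(\mc{A}^2) = rshrank(\mc{G}\n\mc{F})$, then use the index-one condition to equate this with $rshrank(\mc{A})$, and finally verify the formula for $\mc{A}^{\#}$ by checking the three defining equations directly. Your write-up is in fact slightly more explicit than the paper's, which jumps from ``$rshrank(\mc{A}^2) = rshrank(\mc{A})$ iff $\mc{G}\n\mc{F}$ nonsingular'' to ``hence completes first part'' without spelling out the link between group invertibility and the rank equality; your appeal to Theorem~\ref{weitm3.2} and the reshape bijection fills that in.
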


\begin{proof}
Let $\mc{A} = \mc{F}\n\mc{G}$ and  $rshrank(\mc{A}) = r$. Now using Theorem \ref{mr2.11}, we obtain, $rshrank(\mc{A}^{2}) = rshrank(\mc{G}\n\mc{F})$. Therefore, $rshrank(\mc{A}^{2}) = rshrank(\mc{A})$ {\it if and only if} $\mc{G}\n\mc{F}$ is nonsingular. Hence completes first part of the theorem.  To claim the remaining, let $\mc{X} = \mc{F}\n(\mc{G}\n\mc{F})^{-2}\n\mc{G}$. Now we have 
\begin{eqnarray}\label{FR1}
 \mc{A}\n\mc{X}\n\mc{A}
&=& \mc{F}\n\mc{G}\n\mc{F}\n(\mc{G}\n\mc{F})^{-2}\n\mc{G}\n\mc{F}\n\mc{G} =\mc{F}\n\mc{G} = \mc{A}, \\
\mc{X}\n\mc{A}\n\mc{X} &=& \mc{F}\n(\mc{G}\n\mc{F})^{-2}\n\mc{G}\n\mc{F}\n\mc{G}\n\mc{F}\n(\mc{G}\n\mc{F})^{-2}\n\mc{G} 
= \mc{X}, \label{FR2}
\end{eqnarray}
\begin{equation}\label{FR3}
\mc{A}\n\mc{X} = \mc{F}\n\mc{G}\n\mc{F}\n(\mc{G}\n\mc{F})^{-2}\n\mc{G} = \mc{F}\n(\mc{G}\n\mc{F})^{-2}\n\mc{G}\n\mc{F}\n\mc{G} = \mc{X}\n\mc{A}.
\end{equation}
Thus, from \eqref{FR1}, \eqref{FR2} and \eqref{FR3} one can conclude $\mc{X}$ is the group inverse of $\mc{A}.$ 
\end{proof}

\section{W-weighted Drazin inverse}
The W-weighted Drazin inverse, first introduced by Cline and Greville \cite{cline1980} for rectangular matrices. Further, some characterization has given in \cite{wei2002c}. Also, it extended to linear operators \cite{we2002}. In this section, we introduce the W-weighted Drazin inverse for arbitrary order tensors via Einstein product, which is a generalization of the generalized inverses of rectangular matrices, as follows.
\begin{definition}\label{wdraz}
Let $\mc{B} \in \mathbb{C}^{\textbf{I}(M)\times \textbf{J}(N)}$ and  $\mc{W} \in
\mathbb{C}^{\textbf{J}(N) \times \textbf{I}(M)}.$ If a tensor $\mc{X}\in \mathbb{C}^{\textbf{I}(M)\times \textbf{J}(N)}$ satisfying
\begin{enumerate}
    \item[(a)] $\left(\mc{B}\n\mc{W}\right)^{k+1}\m\mc{X}\n\mc{W}=\left(\mc{B}\n\mc{W}\right)^k$ for some positive integer $k,$
    \item[(b)] $\mc{X}\n\mc{W}\m\mc{B}\n\mc{W}\m\mc{X} = \mc{X},$
    \item[(c)] $\mc{B}\n\mc{W}\m\mc{X} = \mc{X}\n\mc{W}\m\mc{B},$
\end{enumerate}
is called the W-weighted Drazin inverse of $\mc{B}$ and denoted by $\mc{B}^{D,W}.$ 
\end{definition}
Using the Defintion \ref{wdraz} and Theorem \ref{mr1.4} one can prove the following results.

\begin{lemma}\label{lem4.1}
Let  $\mc{B} \in \mathbb{C}^{\textbf{I}(M) \times \textbf{J}(N)}$ and  $\mc{W} \in
\mathbb{C}^{\textbf{J}(N) \times \textbf{I}(M) }$, be any tensors. Then 
\begin{enumerate}
    \item[(a)] $(\mc{B}\n\mc{W})^{D} = \mc{B}\n[(\mc{W}\m\mc{B})^{2}]^{D}\n\mc{W}$,
    \item[(b)] $\mc{B}=\mc{B}^{D,W} \mbox{ if and only if }\mc{B}=\mc{B}\n\mc{W}\m\mc{B}\n\mc{W}\m\mc{B}.$
\end{enumerate}
\end{lemma}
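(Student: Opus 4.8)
The plan is to treat the two parts separately, using Theorem \ref{mr1.4} (in fact its proof) for (a) and Definition \ref{wdraz} directly for (b). The single structural fact underlying everything is that the two Einstein products $\n$ and $\m$ associate freely with one another, because $\n$ contracts over the index block $\textbf{J}(N)$ while $\m$ contracts over the disjoint block $\textbf{I}(M)$; writing out the two summations shows that these contractions may be interchanged. In particular one obtains the mixed-associativity identities
\begin{equation*}
\mc{B}\n(\mc{W}\m\mc{B}) = (\mc{B}\n\mc{W})\m\mc{B}, \qquad (\mc{W}\m\mc{B})\n\mc{W} = \mc{W}\m(\mc{B}\n\mc{W}),
\end{equation*}
and, by iteration, $\mc{B}\n(\mc{W}\m\mc{B})^{p} = (\mc{B}\n\mc{W})^{p}\m\mc{B}$ and $(\mc{W}\m\mc{B})^{p}\n\mc{W} = \mc{W}\m(\mc{B}\n\mc{W})^{p}$ for every positive integer $p$, where the powers of $\mc{W}\m\mc{B}\in\mathbb{C}^{\textbf{J}(N)\times\textbf{J}(N)}$ are formed with $\n$ and those of $\mc{B}\n\mc{W}\in\mathbb{C}^{\textbf{I}(M)\times\textbf{I}(M)}$ with $\m$.

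For part (a), these identities let me transport the argument of Theorem \ref{mr1.4} essentially verbatim. I would set $\mc{X}=\mc{B}\n[(\mc{W}\m\mc{B})^{2}]^{D}\n\mc{W}$ and $k=\max\{\mathrm{ind}(\mc{B}\n\mc{W}),\,\mathrm{ind}(\mc{W}\m\mc{B})\}$, and then verify that $\mc{X}$ satisfies the three defining equations of the Drazin inverse of $\mc{B}\n\mc{W}$ with respect to $\m$, namely $(\mc{B}\n\mc{W})^{k+2}\m\mc{X}=(\mc{B}\n\mc{W})^{k+1}$, $\mc{X}\m(\mc{B}\n\mc{W})\m\mc{X}=\mc{X}$, and $(\mc{B}\n\mc{W})\m\mc{X}=\mc{X}\m(\mc{B}\n\mc{W})$. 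In each of these the mixed-associativity identities move the outer $\mc{B}$ and $\mc{W}$ inward so that $[(\mc{W}\m\mc{B})^{2}]^{D}$ meets a power of $\mc{W}\m\mc{B}$, whereupon the Drazin axioms collapse the expression exactly as in the proof of Theorem \ref{mr1.4}. The main obstacle here is purely bookkeeping: since $\mc{B}$ and $\mc{W}$ are genuinely rectangular, one must track at every step which product ($\n$ or $\m$) each juxtaposition and each power uses, and must not conflate the $\n$-powers of $\mc{W}\m\mc{B}$ with the $\m$-powers of $\mc{B}\n\mc{W}$.

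For part (b), the forward implication is immediate: if $\mc{B}=\mc{B}^{D,W}$, then putting $\mc{X}=\mc{B}$ in condition (b) of Definition \ref{wdraz} yields precisely $\mc{B}=\mc{B}\n\mc{W}\m\mc{B}\n\mc{W}\m\mc{B}$. For the converse I would first recast the hypothesis, using associativity, as $(\mc{B}\n\mc{W})^{2}\m\mc{B}=\mc{B}$, and then multiply on the right by $\mc{W}$ through $\n$ to obtain $(\mc{B}\n\mc{W})^{3}=\mc{B}\n\mc{W}$; hence $\mathfrak{R}((\mc{B}\n\mc{W})^{2})=\mathfrak{R}(\mc{B}\n\mc{W})$, so $\mc{B}\n\mc{W}$ has index at most one and $k=1$ is admissible in Definition \ref{wdraz}. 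It then remains to check that $\mc{X}=\mc{B}$ satisfies the three W-weighted Drazin conditions with $k=1$: condition (c) is a tautology, condition (b) is exactly the hypothesis, and condition (a) reads $(\mc{B}\n\mc{W})^{2}\m\mc{B}\n\mc{W}=\mc{B}\n\mc{W}$, which is just the already-derived identity $(\mc{B}\n\mc{W})^{3}=\mc{B}\n\mc{W}$. Appealing to the uniqueness of the W-weighted Drazin inverse then gives $\mc{B}=\mc{B}^{D,W}$. I expect part (b) to be essentially routine once the hypothesis is rewritten in terms of $\mc{B}\n\mc{W}$; the only care needed is in correctly parsing the alternating $\n/\m$ products.
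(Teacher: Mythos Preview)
Your proposal is correct and follows precisely the route the paper indicates: the paper does not write out a proof of this lemma but simply remarks that it follows ``using the Definition \ref{wdraz} and Theorem \ref{mr1.4}'', and that is exactly what you do---part (a) is the rectangular analogue of the computation in Theorem \ref{mr1.4} carried out via the mixed-associativity identities you isolate, and part (b) is checked directly against Definition \ref{wdraz}. The only cosmetic point is that your final appeal to uniqueness of $\mc{B}^{D,W}$ anticipates Theorem \ref{w1.4}; you can avoid any appearance of forward reference by simply noting that since $\mc{B}$ satisfies all three defining conditions of Definition \ref{wdraz}, it is by definition the W-weighted Drazin inverse $\mc{B}^{D,W}$.
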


In connection with the above lemma of an arbitrary-order tensor, the following theorem collects some useful identities of the W-weighted Drazin inverse of tensors.

\begin{theorem}\label{w1.2}
Let $\mc{B} \in \mathbb{C}^{\textbf{I}(M)  \times\textbf{J}(N)}$ and $\mc{W} \in
\mathbb{C}^{\textbf{J}(N) \times \textbf{I}(M) }.$ Then for every positive integer $p$, the following holds
\begin{enumerate}
    \item[(a)] $\mc{W}\m[(\mc{B}\n\mc{W})^{p}]^{D} = [(\mc{W}\m\mc{B})^{p}]^{D}\n\mc{W}$,
    \item[(b)] $\mc{B}\n[(\mc{W}\m\mc{B})^{p}]^{D} = [(\mc{B}\n\mc{W})^{p}]^{D}\m\mc{B}.$
\end{enumerate}
\end{theorem}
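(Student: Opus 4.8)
The plan is to reduce the rectangular, mixed-product statement to the intertwining behaviour of the two square tensors $\mc{P} := \mc{B}\n\mc{W} \in \mathbb{C}^{\textbf{I}(M)\times\textbf{I}(M)}$ and $\mc{Q} := \mc{W}\m\mc{B}\in \mathbb{C}^{\textbf{J}(N)\times\textbf{J}(N)}$, whose natural Einstein products are $\m$ and $\n$ respectively. First I would record the basic associativity of the Einstein product, which yields the two intertwining identities $\mc{W}\m\mc{P} = \mc{Q}\n\mc{W}$ and $\mc{B}\n\mc{Q} = \mc{P}\m\mc{B}$ (both sides of each equal the unambiguous triple products $\mc{W}\m\mc{B}\n\mc{W}$ and $\mc{B}\n\mc{W}\m\mc{B}$). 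An immediate induction then gives $\mc{W}\m\mc{P}^{j} = \mc{Q}^{j}\n\mc{W}$ and $\mc{B}\n\mc{Q}^{j} = \mc{P}^{j}\m\mc{B}$ for every $j\geq 0$. Applying Theorem \ref{mr1.2}(a), whose proof uses only the abstract product algebra and the Drazin axioms and is therefore valid for $\mc{P}$ under $\m$ and for $\mc{Q}$ under $\n$, I may replace $[(\mc{B}\n\mc{W})^{p}]^{D}$ by $(\mc{P}^{D})^{p}$ and $[(\mc{W}\m\mc{B})^{p}]^{D}$ by $(\mc{Q}^{D})^{p}$, so that (a) and (b) become the target identities $\mc{W}\m(\mc{P}^{D})^{p} = (\mc{Q}^{D})^{p}\n\mc{W}$ and $\mc{B}\n(\mc{Q}^{D})^{p} = (\mc{P}^{D})^{p}\m\mc{B}$.

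Next I would settle the case $p=1$, which is the heart of the argument. By Lemma \ref{lem4.1}(a) together with Theorem \ref{mr1.2}(a) one has $\mc{P}^{D} = \mc{B}\n(\mc{Q}^{D})^{2}\n\mc{W}$. Multiplying this on the left by $\mc{W}$ under $\m$ and using $\mc{W}\m\mc{B}=\mc{Q}$ gives $\mc{W}\m\mc{P}^{D} = \mc{Q}\n(\mc{Q}^{D})^{2}\n\mc{W}$, where axioms (3) and (2) of Definition \ref{dz1.1} collapse $\mc{Q}\n(\mc{Q}^{D})^{2} = \mc{Q}^{D}\n\mc{Q}\n\mc{Q}^{D} = \mc{Q}^{D}$, yielding $\mc{W}\m\mc{P}^{D} = \mc{Q}^{D}\n\mc{W}$. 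Symmetrically, multiplying the same expression for $\mc{P}^{D}$ on the right by $\mc{B}$ under $\m$ and again using $\mc{W}\m\mc{B}=\mc{Q}$ gives $\mc{P}^{D}\m\mc{B} = \mc{B}\n(\mc{Q}^{D})^{2}\n\mc{Q} = \mc{B}\n\mc{Q}^{D}$ after the same collapse. Thus both base cases drop out of the single identity in Lemma \ref{lem4.1}(a), with no need to invoke a separate ``mirror'' form of that lemma.

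Finally I would run the induction on $p$. Assuming $\mc{W}\m(\mc{P}^{D})^{p} = (\mc{Q}^{D})^{p}\n\mc{W}$, I write $(\mc{P}^{D})^{p+1} = (\mc{P}^{D})^{p}\m\mc{P}^{D}$ and push $\mc{W}$ through by associativity and the two base relations: $\mc{W}\m(\mc{P}^{D})^{p+1} = ((\mc{Q}^{D})^{p}\n\mc{W})\m\mc{P}^{D} = (\mc{Q}^{D})^{p}\n(\mc{W}\m\mc{P}^{D}) = (\mc{Q}^{D})^{p}\n\mc{Q}^{D}\n\mc{W} = (\mc{Q}^{D})^{p+1}\n\mc{W}$, which is (a); part (b) is entirely analogous using $(\mc{Q}^{D})^{p+1} = \mc{Q}^{D}\n(\mc{Q}^{D})^{p}$ and the relation $\mc{B}\n\mc{Q}^{D} = \mc{P}^{D}\m\mc{B}$. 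I expect the main obstacle to be purely bookkeeping rather than conceptual: since $M\neq N$ in general, each product is forced to be either $\n$ or $\m$ by the index shapes, and at every associativity step one must verify that the contracted block ($\textbf{I}(M)$ or $\textbf{J}(N)$) matches, so that rearrangements such as $((\mc{Q}^{D})^{p}\n\mc{W})\m\mc{P}^{D} = (\mc{Q}^{D})^{p}\n(\mc{W}\m\mc{P}^{D})$ are legitimate. Once the intertwining relations and the $p=1$ identities are in place, the induction is routine.
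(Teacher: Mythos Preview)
Your proposal is correct and follows essentially the same route as the paper: both arguments establish the base case $p=1$ by left-multiplying the identity of Lemma~\ref{lem4.1}(a) by $\mc{W}$ and collapsing $\mc{Q}\n(\mc{Q}^{D})^{2}$ to $\mc{Q}^{D}$ via the Drazin axioms, invoke Theorem~\ref{mr1.2}(a) to pass between $[(\cdot)^{p}]^{D}$ and $[(\cdot)^{D}]^{p}$, and then run the obvious induction on $p$. Your write-up is in fact a bit tidier than the paper's in that you obtain the base case of (b) from the same instance of Lemma~\ref{lem4.1}(a) (by right-multiplying by $\mc{B}$) rather than appealing to a symmetric argument, and you are more careful in distinguishing the $\m$ and $\n$ contractions.
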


\begin{proof}
By using method of induction, we will claim the first part. Let us assume $p=1.$ Now by Lemma  \ref{lem4.1} $(a)$ and Theorem \ref{mr1.2} $(a)$, we get $\mc{W}\m[(\mc{B}\n\mc{W})]^{D}
= \mc{W}\m\mc{B}\n[(\mc{W}\m\mc{B})^{2}]^{D}\n\mc{W} = (\mc{W}\m\mc{B})^{D}\n\mc{W}\m\mc{B}\n(\mc{W}\m\mc{B})^{D}\n\mc{W} = (\mc{W}\m\mc{B})^{D}\n\mc{W}$. Thus the claim is true for $p=1.$ Assume it is true for $p = k.$ That is $\mc{W}\m[(\mc{B}\n\mc{W})^{k}]^{D} = [(\mc{W}\m\mc{B})^{k}]^{D}\n\mc{W}$. Next we will claim  for $p=k+1.$ Since 
\begin{eqnarray*}
  \mc{W}\m[(\mc{B}\n\mc{W})^{k+1}]^{D} &=& \mc{W}\m[(\mc{B}\n\mc{W})^{k}]^{D}\n(\mc{B}\n\mc{W})^{D}  = [(\mc{W}\m\mc{B})^{k}]^{D}\m\mc{W}\m(\mc{B}\n\mc{W})^{D}\\
   &=& [(\mc{W}\m\mc{B})^{k}]^{D}\n(\mc{W}\m\mc{B})^{D}\n\mc{W}=[(\mc{W}\m\mc{B})^{D}]^{k}\n(\mc{W}\m\mc{B})^{D}\n\mc{W}\\
 &=& [(\mc{W}\m\mc{B})^{D}]^{k+1}\n\mc{W} = [(\mc{W}\m\mc{B})^{k+1}]^{D}\n\mc{W}.
\end{eqnarray*}
Therefore, $\mc{W}\m[(\mc{B}\n\mc{W})^{p}]^{D} = [(\mc{W}\m\mc{B})^{p}]^{D}\n\mc{W},$ $p\in\mathbb{N}.$  Using the similar lines, we can show $\mc{B}\n[(\mc{W}\m\mc{B})^{p}]^{D} = [(\mc{B}\n\mc{W})^{p}]^{D}\m\mc{B}$ for all $p\in\mathbb{N}.$
\end{proof}

We next present another characterization of the W-weighted Drazin inverse of tensors.

\begin{theorem}\label{thm4.4}
Let $\mc{B} \in \mathbb{C}^{\textbf{I}(M) \times \textbf{J}(N)}$ and  $\mc{W} \in
\mathbb{C}^{\textbf{J}(N) \times\textbf{I}(M)}.$ Then for every positive integer $p$, there exist an unique tensor $\mc{X}\in \mathbb{C}^{\textbf{I}(M)\times \textbf{J}(N)}$ such that 
\begin{enumerate}
    \item[(a)] $(\mc{B}\n\mc{W})^{D}\m\mc{X}\n\mc{W} = [(\mc{B}\n\mc{W})^{p}]^{D},$
    \item[(b)] $\mc{B}\n\mc{W}\m\mc{X} = \mc{X}\n\mc{W}\m\mc{B}, $
    \item[(c)] $\mc{B}\n\mc{W}\m(\mc{B}\n\mc{W})^{D}\m\mc{X} = \mc{X}.$ Further, $\mc{X} = \mc{B}\n[(\mc{W}\m\mc{B})^{p}]^{D}.$
\end{enumerate}
\end{theorem}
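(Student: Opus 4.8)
The plan is to produce the explicit candidate, check it solves (a)--(c), and then argue these three equations pin it down. Throughout I would abbreviate $\mc{P} = \mc{B}\n\mc{W} \in \mathbb{C}^{\textbf{I}(M)\times\textbf{I}(M)}$ and $\mc{Q} = \mc{W}\m\mc{B} \in \mathbb{C}^{\textbf{J}(N)\times\textbf{J}(N)}$, so that powers $\mc{P}^{j}$ are formed with $\n$ and $\mc{Q}^{j}$ with $\m$, each tensor being square and hence Drazin invertible. The key preliminary observation is that the claimed solution has two interchangeable forms: by Theorem \ref{w1.2}(b) and Theorem \ref{mr1.2}(a),
\begin{equation*}
\mc{X} = \mc{B}\n[(\mc{W}\m\mc{B})^{p}]^{D} = [(\mc{B}\n\mc{W})^{p}]^{D}\m\mc{B} = (\mc{P}^{D})^{p}\m\mc{B}.
\end{equation*}
I would move between these forms and lean on three consequences of Definition \ref{dz1.1} and Lemma \ref{mr11.2}: the commutativity $\mc{P}\m\mc{P}^{D}=\mc{P}^{D}\m\mc{P}$, the identity $(\mc{P}^{D})^{j}\m\mc{P}^{j}=\mc{P}^{D}\m\mc{P}$, and the cancellation rule $(\mc{P}^{D})^{a}\m\mc{P}^{b}=(\mc{P}^{D})^{a-b}$ for $a>b\ge 0$ (with $\mc{P}^{0}$ the identity).

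For existence I would insert $\mc{X}=(\mc{P}^{D})^{p}\m\mc{B}$ into each equation and collapse the exponents. Equation (a) follows from $\mc{X}\n\mc{W}=(\mc{P}^{D})^{p}\m\mc{P}$, giving $(\mc{B}\n\mc{W})^{D}\m\mc{X}\n\mc{W}=(\mc{P}^{D})^{p+1}\m\mc{P}=(\mc{P}^{D})^{p}=[(\mc{B}\n\mc{W})^{p}]^{D}$. For (b) I would use the mixed associativity $\mc{P}\m\mc{B}=(\mc{B}\n\mc{W})\m\mc{B}=\mc{B}\n(\mc{W}\m\mc{B})$ to write $\mc{B}\n\mc{W}\m\mc{X}=\mc{P}\m(\mc{P}^{D})^{p}\m\mc{B}=(\mc{P}^{D})^{p}\m\mc{B}\n\mc{Q}=\mc{X}\n\mc{W}\m\mc{B}$. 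Equation (c) is immediate from $\mc{B}\n\mc{W}\m(\mc{B}\n\mc{W})^{D}\m\mc{X}=\mc{P}\m(\mc{P}^{D})^{p+1}\m\mc{B}=(\mc{P}^{D})^{p}\m\mc{B}=\mc{X}$.

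For uniqueness I would take any $\mc{X}$ satisfying (a)--(c). Reading (b) as $\mc{P}\m\mc{X}=\mc{X}\n\mc{Q}$ and iterating gives $\mc{P}^{p}\m\mc{X}=\mc{X}\n\mc{Q}^{p}$, while (c) combined with Lemma \ref{mr11.2} gives $\mc{X}=\mc{P}\m\mc{P}^{D}\m\mc{X}=(\mc{P}^{D})^{p}\m\mc{P}^{p}\m\mc{X}=(\mc{P}^{D})^{p}\m\mc{X}\n\mc{Q}^{p}$. I would then expand $\mc{Q}^{p}=\mc{W}\m\mc{P}^{p-1}\m\mc{B}$ (the associativity identity $(\mc{W}\m\mc{B})^{p}=\mc{W}\m(\mc{B}\n\mc{W})^{p-1}\m\mc{B}$) so that $\mc{X}=(\mc{P}^{D})^{p}\m(\mc{X}\n\mc{W})\m\mc{P}^{p-1}\m\mc{B}$; applying (a) in the form $\mc{P}^{D}\m\mc{X}\n\mc{W}=(\mc{P}^{D})^{p}$ turns $(\mc{P}^{D})^{p}\m(\mc{X}\n\mc{W})$ into $(\mc{P}^{D})^{2p-1}$, and the cancellation rule then collapses $\mc{X}=(\mc{P}^{D})^{2p-1}\m\mc{P}^{p-1}\m\mc{B}=(\mc{P}^{D})^{p}\m\mc{B}$. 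Via the two forms above this is exactly $\mc{B}\n[(\mc{W}\m\mc{B})^{p}]^{D}$, which both proves uniqueness and identifies the solution.

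The hard part will be the bookkeeping of the two different Einstein products $\n$ and $\m$: I must verify that every regrouping (notably $(\mc{B}\n\mc{W})\m\mc{B}=\mc{B}\n(\mc{W}\m\mc{B})$ and the expansion of $\mc{Q}^{p}$) is dimensionally legitimate, and that the cancellation identities for $\mc{P}$ and $\mc{P}^{D}$ are only invoked with admissible exponents (in particular checking the degenerate case $p=1$, where $\mc{P}^{p-1}=\mc{P}^{0}$ is the identity). Once this accounting is in place, every remaining manipulation is a routine application of Lemma \ref{mr11.2} and Theorems \ref{mr1.2} and \ref{w1.2}.
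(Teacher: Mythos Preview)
Your proposal is correct. The existence half is essentially the paper's argument: you verify that the candidate satisfies (a)--(c) by shuttling between the two equivalent forms $\mc{B}\n[(\mc{W}\m\mc{B})^{p}]^{D}$ and $[(\mc{B}\n\mc{W})^{p}]^{D}\m\mc{B}$ via Theorem~\ref{w1.2}, just as the paper does.

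For uniqueness you take a slightly longer route than the paper. The paper observes that for any solution $\mc{Y}$ one has, in four steps,
\[
\mc{Y} \overset{(c)}{=} \mc{P}\m\mc{P}^{D}\m\mc{Y}
      = \mc{P}^{D}\m\mc{P}\m\mc{Y}
      \overset{(b)}{=} \mc{P}^{D}\m\mc{Y}\n\mc{W}\m\mc{B}
      \overset{(a)}{=} [(\mc{B}\n\mc{W})^{p}]^{D}\m\mc{B},
\]
and then invokes Theorem~\ref{w1.2}(b) to finish. Your argument instead iterates (b) $p$ times, expands $\mc{Q}^{p}=\mc{W}\m\mc{P}^{p-1}\m\mc{B}$, and cancels exponents via Lemma~\ref{mr11.2}. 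Both approaches are valid; the paper's is shorter because it applies (a) once after a single use of (b), rather than building up to the $p$-th power before invoking (a). Your version has the minor advantage of making the role of Lemma~\ref{mr11.2} explicit, and your cautionary note about the $p=1$ case (where $\mc{P}^{p-1}$ is the identity) is well placed.
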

\begin{proof}
Let $\mc{X} = \mc{B}\n[(\mc{W}\m\mc{B})^{p}]^{D},$ we first claim that $\mc{X}$ satisfies all the three equations and then will discuss the uniqueness. Let $\mc{X} = \mc{B}\n[(\mc{W}\m\mc{B})^{p}]^{D}.$ Now by Theorem \ref{w1.2}, we have
\begin{eqnarray*}
(\mc{B}\n\mc{W})^{D}\m\mc{X}\n\mc{W}&=&(\mc{B}\n\mc{W})^{D}\m\mc{B}\n\left[(\mc{W}\m\mc{B})^{p}\right]^{D}\n\mc{W}\\
&=&(\mc{B}\n\mc{W})^{D}\m\left[(\mc{B}\n\mc{W})^{D}\right]^{p}\m\mc{B}\n\mc{W}=\left[(\mc{B}\n\mc{W})^{D}\right]^{p+1}\m\mc{B}\n\mc{W}\\
&=&\left[(\mc{B}\n\mc{W})^{D}\right]^{p}=\left[(\mc{B}\n\mc{W})^{p}\right]^{D},
\end{eqnarray*}
\begin{eqnarray*}
\textnormal{and ~~} \mc{B}\n\mc{W}\m\mc{X} &=& \mc{B}\n\mc{W}\m\mc{B}\n\left[\left(\mc{W}\m\mc{B}\right)^p\right]^D=\mc{B}\n\mc{W}\m\left[\left(\mc{B}\n\mc{W}\right)^p\right]^D\m\mc{B}\\
 &=&\mc{B}\n\left[\left(\mc{W}\m\mc{B}\right)^p\right]^D\n\mc{W}\m\mc{B}=\mc{X}\n\mc{W}\m\mc{B},
\end{eqnarray*}
and 
$\mc{B}\n\mc{W}\m(\mc{B}\n\mc{W})^{D}\m\mc{X} =\mc{B}\n\mc{W}\m(\mc{B}\n\mc{W})^{D}\m\mc{B}\n[(\mc{W}\m\mc{B})^{p}]^{D}=[(\mc{B}\n\mc{W})^{p}]^{D}\m\mc{B}=\mc{B}\n\left[(\mc{W}\m\mc{B})^p\right]^D= \mc{X}.$ Hence $\mc{X}$ satisfies all the required equations. Let $\mc{Y}$ be another tensor which satisfies $(a)-(c).$ Now $\mc{Y}=\mc{B}\n\mc{W}\m(\mc{B}\n\mc{W})^{D}\m\mc{Y}=(\mc{B}\n\mc{W})^{D}\m\mc{B}\n\mc{W}\m\mc{Y}=(\mc{B}\n\mc{W})^{D}\m\mc{Y}\n\mc{W}\m\mc{B}=[(\mc{B}\n\mc{W})^{p}]^{D}\m\mc{B}=\mc{B}\n[(\mc{W}\m\mc{B})^{p}]^{D}=\mc{X}.$  
\end{proof}

Using the method as in the proof of Theorem \ref{thm4.4}, one can prove the next theorem.
\begin{theorem}\label{thm4.5}
Let $\mc{B} \in \mathbb{C}^{\textbf{I}(M)\times \textbf{J}(N)}$ and  $\mc{W} \in
\mathbb{C}^{\textbf{J}(N) \times \textbf{I}(M)}.$ Then for every positive integer $p$, there exists an unique tensor $\mc{X}\in \mathbb{C}^{\textbf{I}(M) \times\textbf{J}(N)}$ such that 
\begin{enumerate}
    \item[(a)] $\mc{X}\n\mc{W} =  \mc{B}\n\mc{W}\m[(\mc{B}\n\mc{W})^{p}]^{D},$
    \item[(b)] $\mc{W}\m\mc{X} = \mc{W}\m\mc{B}\n[(\mc{W}\m\mc{B})^{p}]^{D}, $
    \item[(c)] $\mc{X}\n\mc{W}\m(\mc{B}\n\mc{W})^{p-1}\m\mc{X} = \mc{X}.$ Further, $\mc{X} = \mc{B}\n[(\mc{W}\m\mc{B})^{p}]^{D}.$
\end{enumerate}
\end{theorem}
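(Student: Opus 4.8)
The plan is to follow verbatim the strategy of Theorem \ref{thm4.4}: exhibit the explicit candidate $\mc{X} = \mc{B}\n[(\mc{W}\m\mc{B})^{p}]^{D}$, check that it satisfies the three identities (a)--(c), and then prove that any tensor satisfying (a)--(c) must coincide with it. Throughout I would abbreviate $\mc{C} = \mc{B}\n\mc{W}$ and $\mc{E} = \mc{W}\m\mc{B}$, and rely on Theorem \ref{w1.2} to slide $\mc{W}$ and $\mc{B}$ across the Drazin inverses, on Theorem \ref{mr1.2}(a) to identify $[(\,\cdot\,)^{p}]^{D}$ with $((\,\cdot\,)^{D})^{p}$, and on the commutation $\mc{C}\m\mc{C}^{D} = \mc{C}^{D}\m\mc{C}$ coming from the definition of the Drazin inverse.

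First I would dispatch (b) and (a). Identity (b), namely $\mc{W}\m\mc{X} = \mc{W}\m\mc{B}\n[(\mc{W}\m\mc{B})^{p}]^{D}$, is immediate by left-multiplying the definition of $\mc{X}$ by $\mc{W}$. For (a), I would compute $\mc{X}\n\mc{W} = \mc{B}\n[(\mc{W}\m\mc{B})^{p}]^{D}\n\mc{W}$ and invoke Theorem \ref{w1.2}(a), which rewrites $[(\mc{W}\m\mc{B})^{p}]^{D}\n\mc{W}$ as $\mc{W}\m[(\mc{B}\n\mc{W})^{p}]^{D}$; this yields exactly $\mc{B}\n\mc{W}\m[(\mc{B}\n\mc{W})^{p}]^{D}$, as required.

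The third identity carries the real content. Using (a) together with $[(\mc{B}\n\mc{W})^{p}]^{D} = (\mc{C}^{D})^{p}$, I would reduce $\mc{X}\n\mc{W}\m(\mc{B}\n\mc{W})^{p-1}$ to $\mc{C}\m\mc{C}^{D}$ by collapsing $(\mc{C}^{D})^{p}\m\mc{C}^{p-1}$ to $\mc{C}^{D}$, which follows from the commutation above together with Theorem \ref{mr1.10}(b) applied to $\mc{C}$. Post-multiplying by $\mc{X}$ then turns (c) into $\mc{B}\n\mc{W}\m(\mc{B}\n\mc{W})^{D}\m\mc{X} = \mc{X}$, which is precisely the relation already established for this same candidate in the proof of Theorem \ref{thm4.4}(c); hence (c) holds.

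For uniqueness I would take any $\mc{Y}$ satisfying (a)--(c). The same collapse applied to (a) gives $\mc{Y}\n\mc{W}\m(\mc{B}\n\mc{W})^{p-1} = \mc{C}\m\mc{C}^{D}$, so (c) forces $\mc{Y} = \mc{C}\m\mc{C}^{D}\m\mc{Y}$. I would then substitute Lemma \ref{lem4.1}(a) for $\mc{C}^{D}$ and hypothesis (b) for $\mc{W}\m\mc{Y}$, and repeatedly apply $\mc{E}\n(\mc{E}^{D})^{p} = (\mc{E}^{D})^{p-1}$ (Theorem \ref{mr1.10}(b)) to telescope the resulting $\mc{E}$-string down to $(\mc{E}^{D})^{p}$, obtaining $\mc{B}\n(\mc{E}^{D})^{p} = \mc{X}$. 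I expect the main obstacle to be the bookkeeping in this final telescoping: one must keep the $\n$ and $\m$ products correctly ordered while commuting the Drazin-inverse powers past their bases, since $\mc{B}$ and $\mc{W}$ sit in different product structures and only $\mc{E} = \mc{W}\m\mc{B}$ and $\mc{C} = \mc{B}\n\mc{W}$ carry well-defined Drazin inverses.
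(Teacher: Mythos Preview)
Your proposal is correct and is exactly the route the paper intends: the paper does not write out a proof of Theorem~\ref{thm4.5} but only says ``Using the method as in the proof of Theorem~\ref{thm4.4}, one can prove the next theorem,'' and your argument is precisely that---exhibit $\mc{X} = \mc{B}\n[(\mc{W}\m\mc{B})^{p}]^{D}$, verify (a)--(c) via Theorem~\ref{w1.2} and the Drazin-inverse identities, then recover $\mc{Y}=\mc{X}$ for any competitor by collapsing the $\mc{E}$-powers. The bookkeeping you flag as the potential obstacle is routine once one uses $\mc{E}^{D}\n\mc{E}\n\mc{E}^{D}=\mc{E}^{D}$ and Lemma~\ref{mr11.2}, so nothing further is needed.
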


By combining Theorem \ref{thm4.4} and Theorem \ref{thm4.5} for a particular choice of $p=2$, we get the following result as a corollary.

\begin{corollary}\label{cor4.6}
Let  $\mc{B} \in \mathbb{C}^{\textbf{I}(M) \times \textbf{J}(N)}$ and  $\mc{W} \in
\mathbb{C}^{\textbf{J}(N) \times \textbf{I}(M)}.$ Then the tensor $\mc{X} = \mc{B}\n[(\mc{W}\m\mc{B})^{2}]^{D}$ is the W-weighted Drazin inverse of $\mc{B}.$
\end{corollary}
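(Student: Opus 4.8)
The plan is to lean on the observation made just before the statement: the tensor $\mc{X} = \mc{B}\n[(\mc{W}\m\mc{B})^{2}]^{D}$ is exactly the common tensor produced by Theorem \ref{thm4.4} and Theorem \ref{thm4.5} when $p=2$. Hence $\mc{X}$ automatically satisfies all the identities listed in those two theorems, and the whole task reduces to checking that these identities force the three defining equations of the W-weighted Drazin inverse in Definition \ref{wdraz}. First I would dispose of the two easy conditions. Condition (c) of Definition \ref{wdraz}, namely $\mc{B}\n\mc{W}\m\mc{X} = \mc{X}\n\mc{W}\m\mc{B}$, is verbatim Theorem \ref{thm4.4}(b). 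Condition (b), namely $\mc{X}\n\mc{W}\m\mc{B}\n\mc{W}\m\mc{X} = \mc{X}$, is Theorem \ref{thm4.5}(c) specialized to $p=2$: there the middle factor is $(\mc{B}\n\mc{W})^{p-1}=(\mc{B}\n\mc{W})^{1}$, so after regrouping $\mc{W}\m(\mc{B}\n\mc{W})$ as $\mc{W}\m\mc{B}\n\mc{W}$ by associativity of the Einstein products the two statements coincide.

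The only substantive step is condition (a): $(\mc{B}\n\mc{W})^{k+1}\m\mc{X}\n\mc{W} = (\mc{B}\n\mc{W})^{k}$, where $k = \mathrm{ind}(\mc{B}\n\mc{W})$. Here I would begin from Theorem \ref{thm4.4}(a) with $p=2$, which reads $(\mc{B}\n\mc{W})^{D}\m\mc{X}\n\mc{W} = [(\mc{B}\n\mc{W})^{2}]^{D}$, and rewrite the right-hand side as $\big((\mc{B}\n\mc{W})^{D}\big)^{2}$ using Theorem \ref{mr1.2}(a). Premultiplying this identity by $(\mc{B}\n\mc{W})^{k+2}$ and collapsing both sides with the basic Drazin relation $(\mc{B}\n\mc{W})^{l+1}\m(\mc{B}\n\mc{W})^{D} = (\mc{B}\n\mc{W})^{l}$ for $l\geq k$ (which follows from equation (1) of Definition \ref{dz1.1} by multiplying through by powers of $\mc{B}\n\mc{W}$, applied repeatedly) turns the left-hand side into $(\mc{B}\n\mc{W})^{k+1}\m\mc{X}\n\mc{W}$ and the right-hand side into $(\mc{B}\n\mc{W})^{k}$. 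This is precisely condition (a).

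The point demanding care, rather than a genuine difficulty, is the bookkeeping of the two Einstein products: $\n$ contracts the $\textbf{J}(N)$ slots while $\m$ contracts the $\textbf{I}(M)$ slots, and $\mc{B}\n\mc{W}$ is a square even-order tensor in $\mathbb{C}^{\textbf{I}(M)\times\textbf{I}(M)}$ whose powers and Drazin inverse must be formed consistently with $\m$. Once the types are pinned down, the cited results (Theorem \ref{mr1.2}(a) and the iterated Drazin relation) apply verbatim in the $\m$-setting exactly as in the $\n$-setting, since $\mc{B}\n\mc{W}$ is square, and every associative regrouping used above is legitimate. Having verified (a)--(c), I would conclude that $\mc{X} = \mc{B}\n[(\mc{W}\m\mc{B})^{2}]^{D}$ is the W-weighted Drazin inverse $\mc{B}^{D,W}$ of $\mc{B}$.
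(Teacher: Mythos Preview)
Your proposal is correct and follows exactly the route the paper intends: the corollary is stated as an immediate consequence of Theorems \ref{thm4.4} and \ref{thm4.5} at $p=2$, and you have correctly spelled out how conditions (b) and (c) of Definition \ref{wdraz} are literally Theorem \ref{thm4.5}(c) and Theorem \ref{thm4.4}(b) in this case, and how condition (a) follows after a short computation. One small simplification: for condition (a) you can use Theorem \ref{thm4.5}(a) directly instead of Theorem \ref{thm4.4}(a), since at $p=2$ it gives $\mc{X}\n\mc{W} = (\mc{B}\n\mc{W})\m[(\mc{B}\n\mc{W})^{2}]^{D} = (\mc{B}\n\mc{W})^{D}$, whence $(\mc{B}\n\mc{W})^{k+1}\m\mc{X}\n\mc{W} = (\mc{B}\n\mc{W})^{k}$ is immediate from Definition \ref{dz1.1}(1); this avoids the premultiplication step.
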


The above Corollary reflected the existence of the W-weighted drazin inverse and the uniqueness of the W-weighted Drazin inverse is discussed in the next theorem.         

\begin{theorem}\label{w1.4}
The tensor $\mc{X} = \mc{B}\n[(\mc{W}\m\mc{B})^{2}]^{D}$ is the unique solution of the following tensor equations
\begin{enumerate}
    \item[(a)] $(\mc{B}\n\mc{W})^{k+1}\m\mc{X}\n\mc{W}=(\mc{B}\n\mc{W})^{k}$,
    \item[(b)] $ \mc{X} = \mc{X}\n\mc{W}\m\mc{B}\n\mc{W}\m\mc{X}$,
    \item[(c)]  $\mc{B}\n\mc{W}\m\mc{X} = \mc{X}\n\mc{W}\m\mc{B}.$
\end{enumerate}
\end{theorem}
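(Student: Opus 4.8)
The plan is to split the statement into an existence half and a uniqueness half, and to observe that existence requires essentially no new work. Equations (a)--(c) are verbatim the three defining relations of the W-weighted Drazin inverse in Definition \ref{wdraz}, and Corollary \ref{cor4.6} already asserts that $\mc{B}\n[(\mc{W}\m\mc{B})^{2}]^{D}$ is precisely $\mc{B}^{D,W}$. Hence the fact that $\mc{X}=\mc{B}\n[(\mc{W}\m\mc{B})^{2}]^{D}$ satisfies (a)--(c) is inherited directly from Corollary \ref{cor4.6}, and the whole burden falls on uniqueness.

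For uniqueness I would reduce everything to the already-proved uniqueness of the ordinary Drazin inverse, Theorem \ref{mr1.1}. Write $\mc{P}=\mc{B}\n\mc{W}\in\mathbb{C}^{\textbf{I}(M)\times\textbf{I}(M)}$ and, given an arbitrary solution $\mc{X}$ of (a)--(c), set $\mc{E}=\mc{X}\n\mc{W}\in\mathbb{C}^{\textbf{I}(M)\times\textbf{I}(M)}$. The first step is to check that $\mc{E}$ satisfies the three Drazin equations for $\mc{P}$ (with $k\geq\mathrm{ind}(\mc{B}\n\mc{W})$):
\begin{enumerate}
\item[(i)] $\mc{P}^{k+1}\m\mc{E}=\mc{P}^{k}$, which is (a) regrouped as $\mc{P}^{k+1}\m(\mc{X}\n\mc{W})=\mc{P}^{k}$;
\item[(ii)] $\mc{E}\m\mc{P}\m\mc{E}=\mc{E}$, obtained by appending $\n\mc{W}$ on the right of (b) and regrouping as $(\mc{X}\n\mc{W})\m(\mc{B}\n\mc{W})\m(\mc{X}\n\mc{W})$;
\item[(iii)] $\mc{P}\m\mc{E}=\mc{E}\m\mc{P}$, which comes from (c) after appending $\n\mc{W}$ on the right.
\end{enumerate}
By Theorem \ref{mr1.1} this forces $\mc{E}=\mc{X}\n\mc{W}=\mc{P}^{D}=(\mc{B}\n\mc{W})^{D}$, an identity that holds for \emph{every} solution $\mc{X}$.

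With this in hand I would recover $\mc{X}$ itself. Reading (c) as $\mc{P}\m\mc{X}=(\mc{X}\n\mc{W})\m\mc{B}=\mc{P}^{D}\m\mc{B}$ and (b) as $\mc{X}=(\mc{X}\n\mc{W})\m(\mc{B}\n\mc{W})\m\mc{X}=\mc{P}^{D}\m\mc{P}\m\mc{X}$, substitution yields
\begin{equation*}
\mc{X}=\mc{P}^{D}\m(\mc{P}\m\mc{X})=\mc{P}^{D}\m\mc{P}^{D}\m\mc{B}=[(\mc{B}\n\mc{W})^{D}]^{2}\m\mc{B}.
\end{equation*}
Since the right-hand side is independent of the chosen solution, any two solutions coincide, giving uniqueness. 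As a consistency check, Theorem \ref{mr1.2}(a) turns $[(\mc{B}\n\mc{W})^{D}]^{2}$ into $[(\mc{B}\n\mc{W})^{2}]^{D}$, and Theorem \ref{w1.2}(b) then gives $[(\mc{B}\n\mc{W})^{2}]^{D}\m\mc{B}=\mc{B}\n[(\mc{W}\m\mc{B})^{2}]^{D}$, confirming that the unique solution is exactly the advertised tensor.

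The step I expect to demand the most care is (i)--(iii). The powers of $\mc{B}\n\mc{W}$ are formed with the $\m$ product, whereas $\mc{E}=\mc{X}\n\mc{W}$ pairs $\mc{X}$ and $\mc{W}$ through the $\n$ product, so one must keep track of which index block ($\textbf{I}(M)$ versus $\textbf{J}(N)$) each contraction acts on and justify every regrouping by the associativity of the Einstein product across the two different contraction depths. Once this bookkeeping is fixed, each of (i)--(iii) is a one-line rearrangement of (a)--(c), and the remainder of the argument is purely formal.
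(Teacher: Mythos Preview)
Your proof is correct, and it takes a genuinely different route from the paper's. The paper establishes uniqueness by a direct chain: given two solutions $\mc{X}$ and $\mc{Y}$, it repeatedly expands $\mc{X}$ via (b) and (c) into $(\mc{B}\n\mc{W})^{k}\m(\mc{X}\n\mc{W})^{k}\m\mc{X}$, inserts $\mc{Y}$ through (a), commutes via (c), and then collapses back down to $\mc{Y}$---essentially replaying the computation of Theorem~\ref{mr1.1} in the weighted setting. Your argument instead abstracts that computation away: by setting $\mc{P}=\mc{B}\n\mc{W}$ and $\mc{E}=\mc{X}\n\mc{W}$, you recognise that (a)--(c) post-multiplied by $\mc{W}$ are exactly the ordinary Drazin equations for $\mc{P}$, invoke Theorem~\ref{mr1.1} once to pin down $\mc{X}\n\mc{W}=(\mc{B}\n\mc{W})^{D}$, and then recover $\mc{X}$ from (b) and (c). This is shorter and more conceptual, and it makes transparent why the weighted uniqueness is really a corollary of the unweighted one; the paper's version is more self-contained but hides that structure inside a long calculation. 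One small point: your parenthetical ``with $k\geq\mathrm{ind}(\mc{B}\n\mc{W})$'' is not actually given by the hypotheses (the $k$ in (a) is just some positive integer), but this is harmless since the uniqueness argument of Theorem~\ref{mr1.1} works verbatim for any common $k$, not only the index.
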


\begin{proof}
The existence of solution is trivially holds by Corollary \ref{cor4.6}. It is enough to show only the uniqueness of $\mc{X}.$ Suppose there exists another tensor  $\mc{Y}$  which satisfies the conditions $(a)-(c).$ Now, we have 
\begin{eqnarray*}
  \mc{X}& =& (\mc{X}\n\mc{W}\m\mc{B})\n\mc{W}\m\mc{X} =  (\mc{B}\n\mc{W})\m\mc{X}\n\mc{W}\m\mc{X}\\
  &=& (\mc{B}\n\mc{W})^{2}\m(\mc{X}\n\mc{W})^{2}\m\mc{X}= \cdots\\
  &=&(\mc{B}\n\mc{W})^{{k}}\m(\mc{X}\n\mc{W})^{{k}}\m\mc{X} = (\mc{B}\n\mc{W})^{{k}+1}\m\mc{Y}\n\mc{W}\m(\mc{X}\n\mc{W})^{{k}}\m\mc{X}\\
  &=& \mc{Y}\n(\mc{W}\m\mc{B})^{{k}+1}\n\mc{W}\m(\mc{X}\n\mc{W})^{{k}}\m\mc{X}\\
  &=& \mc{Y}\n\mc{W}\m\mc{B}\n\mc{W}\m(\mc{B}\n\mc{W})^{{k}}\m(\mc{X}_1\n\mc{W})^{{k}}\m\mc{X}= \mc{Y}\n\mc{W}\m(\mc{B}\n\mc{W}\m\mc{X})\\
  &=& \mc{Y}\n\mc{W}\m\mc{X}\n\mc{W}\m\mc{B} = \mc{Y}\n\mc{W}\m\mc{B}\n\mc{W}\m\mc{Y}\n\mc{W}\m\mc{X}\n\mc{W}\m\mc{B}\\
  &=& \mc{Y}\n\mc{W}\m\mc{B}\n\mc{W}\m\mc{Y}\n\mc{W}\m\mc{B}\n\mc{W}\m\mc{Y}\n\mc{W}\m\mc{X}\n\mc{W}\m\mc{B}\\
  &=& \mc{Y}\n\mc{W}\m\mc{Y}\n\mc{W}\m\mc{B}\n\mc{W}\m\mc{Y}\n\mc{W}\m\mc{B}\n\mc{W}\m\mc{X}\n\mc{W}\m\mc{B}\\
  &=& \mc{Y}\n\mc{W}\m\mc{Y}\n\mc{W}\m\mc{Y}\n\mc{W}\m\mc{B}\n\mc{W}\m\mc{B}\n\mc{W}\m\mc{X}\n\mc{W}\m\mc{B}\\
  &=& \mc{Y}\n\mc{W}\m\mc{Y}\n\mc{W}\m\mc{Y}\n\mc{W}\m(\mc{B}\n\mc{W})^{2}\m\mc{X}\n\mc{W}\m\mc{B}\\
  &=& \mc{Y}\n\mc{W}\m\mc{Y}\n\mc{W}\m\mc{Y}\n\mc{W}\m\mc{B}\n\mc{W}\m\mc{B}\\
  &=& \mc{Y}\n\mc{W}\m\mc{Y}\n\mc{W}\m\mc{B}\n\mc{W}\m\mc{Y}\n\mc{W}\m\mc{B} = \mc{Y}\n\mc{W}\m\mc{Y}\n\mc{W}\m\mc{B}\\
  &=& \mc{Y}\n\mc{W}\m\mc{B}\n\mc{W}\m\mc{Y}
  = \mc{Y}.
\end{eqnarray*}
Therefore, $\mc{X} = \mc{B}\n[(\mc{W}\m\mc{B})^{2}]^{D}$ is the unique solution.
\end{proof}

We conclude this section with an additional property of the W-weighted Drazin inverse, which helps to compute the W-weighted Drazin inverse via index one tensors.

\begin{theorem}\label{w1.5}
Let $\mc{B}, \mc{X} \in \mathbb{C}^{\textbf{I}(M) \times \textbf{J}(N)}$. Then $\mc{X} = \mc{B}\n[(\mc{W}\m\mc{B})^{2}]^{D}$, for some tensor $\mc{W}$
if and only if $\mc{X} = \mc{B}\n\mc{Y}\m\mc{B}\n\mc{Y}\m\mc{B}$
for some tensor $\mc{Y} \in
\mathbb{C}^{\textbf{J}(N) \times \textbf{I}(M)}$
with ind\,$(\mc{B}\n\mc{Y})=1=$ ind\,$(\mc{Y}\m\mc{B})$.
\end{theorem}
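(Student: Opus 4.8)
The plan is to prove both implications by explicit construction, in each case producing the witness tensor ($\mc{Y}$ for one direction, $\mc{W}$ for the other) and verifying the relations with the standard Drazin identities. The tools I expect to use repeatedly are: the commuting law $\mc{A}\n\mc{A}^D=\mc{A}^D\n\mc{A}$ together with $(\mc{A}^D)^2\n\mc{A}=\mc{A}^D$; the power rule $[(\mc{A}^D)^2]^D=((\mc{A}^D)^D)^2$ from Theorem \ref{mr1.2}(a); the shuttling identity $\mc{B}\n[(\mc{W}\m\mc{B})^p]^D=[(\mc{B}\n\mc{W})^p]^D\m\mc{B}$ from Theorem \ref{w1.2}(b); and the reflexivity criterion $(\mc{A}^D)^D=\mc{A}\iff\mathrm{ind}(\mc{A})=1$ from Theorem \ref{mrr1.1}. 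Throughout I write $\mc{A}=\mc{W}\m\mc{B}\in\mathbb{C}^{\textbf{J}(N)\times\textbf{J}(N)}$ and $\mc{C}=\mc{B}\n\mc{W}\in\mathbb{C}^{\textbf{I}(M)\times\textbf{I}(M)}$, and use associativity of the mixed-order Einstein products freely; in particular $\mc{B}\n\mc{Y}\m\mc{B}\n\mc{Y}\m\mc{B}=\mc{B}\n(\mc{Y}\m\mc{B})^2$ for every $\mc{Y}$.

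For the forward implication, suppose $\mc{X}=\mc{B}\n[(\mc{W}\m\mc{B})^2]^D=\mc{B}\n(\mc{A}^D)^2$. I would set $\mc{Y}=(\mc{A}^D)^2\n\mc{W}$. Using $\mc{W}\m\mc{B}=\mc{A}$ one gets $\mc{Y}\m\mc{B}=(\mc{A}^D)^2\n\mc{A}=\mc{A}^D$, which is group invertible by Theorem \ref{mr1.2}(b) and hence of index one. Dually, pushing $\mc{B}$ through by Theorem \ref{w1.2}(b) gives $\mc{B}\n(\mc{A}^D)^2=(\mc{C}^D)^2\m\mc{B}$, whence $\mc{B}\n\mc{Y}=(\mc{C}^D)^2\m\mc{C}=\mc{C}^D$, again of index one. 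Finally, regrouping shows $\mc{B}\n\mc{Y}\m\mc{B}\n\mc{Y}\m\mc{B}=\mc{B}\n(\mc{Y}\m\mc{B})^2=\mc{B}\n(\mc{A}^D)^2=\mc{X}$, so $\mc{Y}$ witnesses the right-hand side.

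For the converse, suppose $\mc{X}=\mc{B}\n\mc{Y}\m\mc{B}\n\mc{Y}\m\mc{B}$ with $\mathrm{ind}(\mc{Y}\m\mc{B})=1$. Writing $\mc{E}=\mc{Y}\m\mc{B}$ and regrouping, $\mc{X}=\mc{B}\n\mc{E}^2$. I would define $\mc{W}=(\mc{E}^{\#})^2\n\mc{Y}$; then $\mc{W}\m\mc{B}=(\mc{E}^{\#})^2\n\mc{E}=\mc{E}^{\#}=\mc{E}^D$ since $\mc{E}$ has index one. Applying Theorem \ref{mr1.2}(a) and then Theorem \ref{mrr1.1} (using $\mathrm{ind}(\mc{E})=1$) yields $[(\mc{W}\m\mc{B})^2]^D=[(\mc{E}^D)^2]^D=((\mc{E}^D)^D)^2=\mc{E}^2$, so that $\mc{B}\n[(\mc{W}\m\mc{B})^2]^D=\mc{B}\n\mc{E}^2=\mc{X}$, as required.

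The routine parts are the index-one verifications and the algebraic cancellations. The step demanding the most care is the bookkeeping of the two different Einstein products $\n$ and $\m$, since $\mc{B}\n\mc{Y}$ and $\mc{Y}\m\mc{B}$ live in different spaces; I expect the main obstacle to be confirming the index-one conditions on both $\mc{B}\n\mc{Y}$ and $\mc{Y}\m\mc{B}$ simultaneously in the forward direction. This is exactly where the shuttling identity of Theorem \ref{w1.2}(b) is essential, as it converts the $\mc{W}\m\mc{B}$-side computation into the $\mc{B}\n\mc{W}$-side one and lets me identify both $\mc{Y}\m\mc{B}$ and $\mc{B}\n\mc{Y}$ as genuine Drazin inverses.
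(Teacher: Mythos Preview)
Your proposal is correct and follows essentially the same approach as the paper: both directions use the same witness tensors (forward: $\mc{Y}=[(\mc{W}\m\mc{B})^2]^D\n\mc{W}=(\mc{A}^D)^2\n\mc{W}$; converse: $\mc{W}=[(\mc{Y}\m\mc{B})^2]^D\n\mc{Y}=(\mc{E}^{\#})^2\n\mc{Y}$), and the verifications rest on the same identities from Theorems~\ref{mr1.2}, \ref{mrr1.1}, and \ref{w1.2}. Your converse is in fact slightly more direct than the paper's, which routes the final identification through Theorem~\ref{thm4.5} rather than computing $[(\mc{W}\m\mc{B})^2]^D=\mc{E}^2$ explicitly via $(\mc{E}^D)^D=\mc{E}$.
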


\begin{proof}
Let $\mc{X} = \mc{B}\n[(\mc{W}\m\mc{B})^{2}]^{D}.$ By using Theorem \ref{w1.2} and Theorem \ref{mr1.2} $(a)$, we obtain
\begin{eqnarray*}
   \mc{X} &=& \mc{B}\n[(\mc{W}\m\mc{B})^{2}]^{D} = \mc{B}\n(\mc{W}\m\mc{B})^{D}\n(\mc{W}\m\mc{B})^{D}= (\mc{B}\n\mc{W})^{D}\m\mc{B}\n(\mc{W}\m\mc{B})^{D}\\
  & =& (\mc{B}\n\mc{W})\m[(\mc{B}\n\mc{W})^{2}]^{D}\m\mc{B}\n[(\mc{W}\m\mc{B})^{2}]^{D}\n(\mc{W}\m\mc{B})\\
   & =& \mc{B}\n[(\mc{W}\m\mc{B})^{2}]^{D}\n\mc{W}\m\mc{B}\n[(\mc{W}\m\mc{B})^{2}]^{D}\n\mc{W}\m\mc{B}=\mc{B}\n\mc{Y}\m\mc{B}\n\mc{Y}\m\mc{B},
\end{eqnarray*}
where $\mc{Y} = [(\mc{W}\m\mc{B})^{2}]^{D}\n\mc{W}.$ Next we will show $\mc{B}\n\mc{Y}$ and  $\mc{Y}\m\mc{B}$ are of index one. As
$ \mc{B}\n\mc{Y}= \mc{B}\n[(\mc{W}\m\mc{B})^{2}]^{D}\n\mc{W} =[(\mc{B}\n\mc{W})^{2}]^{D}\m\mc{B}\n\mc{W}=(\mc{B}\n\mc{W})^{D} $ and   
$\mc{Y}\m\mc{B} =[(\mc{W}\m\mc{B})^{2}]^{D}\n\mc{W}\m\mc{B}= (\mc{W}\m\mc{B})^{D}.$ Therefore, both have index one. Which completes the sufficient part.

Conversely, let $\mc{X} = \mc{B}\n\mc{Y}\m\mc{B}\n\mc{Y}\m\mc{B}$ and $\mc{W} = \mc{Y}\m[(\mc{B}\n\mc{Y})^{2}]^{D} = [(\mc{Y}\m\mc{B})^{2}]^{D}\n\mc{Y}$ such that ind$(\mc{B}\n\mc{Y})=\mbox{ind}(\mc{Y}\m\mc{B})=1.$  To claim the necessary part, it is enough to show, $\mc{X}$ satisfies all the assumptions of Theorem \ref{thm4.5} for $p=2.$  Notice that, from $\mc{W},$ we easily get $(\mc{W}\m\mc{B})^D=\mc{Y}\m\mc{B}$ and $(\mc{B}\n\mc{W})^D=\mc{B}\n\mc{Y}$, since $\mc{B}\n\mc{Y}$ and $\mc{Y}\m\mc{B}$ are of index one.
Now $\mc{X}\n\mc{W}=\mc{B}\n\mc{Y}\m\mc{B}\n\mc{Y}\m\mc{B}\n\mc{Y}\m[(\mc{B}\n\mc{Y})^{2}]^{D}=(\mc{B}\n\mc{Y})^3*_N[(\mc{B}\n\mc{Y})^D]^2=\mc{B}\n\mc{Y}=(\mc{B}\n\mc{W})^D=\mc{B}\n\mc{W}\m[(\mc{B}\n\mc{W})^2]^D.$ Similarly, $\mc{W}\m\mc{X} = (\mc{W}\m\mc{B})\n\mc{Y}\m\mc{B}\n\mc{Y}\m\mc{B}
= (\mc{Y}\m\mc{B})^{D}\n\mc{Y}\m\mc{B}\n\mc{Y}\m\mc{B} = \mc{Y}\m\mc{B} = (\mc{W}\m\mc{B})^{D}=\mc{W}\m\mc{B}\n[(\mc{W}\m\mc{B})^{2}]^D.$ Further, we have $\mc{X}\n\mc{W}\m\mc{B}\n\mc{W}\m\mc{X} = \mc{B}\n\mc{Y}\m\mc{B}\n\mc{Y}\m\mc{B} = \mc{X}.$ 
\end{proof}

\section{Multilinear system}
The main objective of this section is solving multilinear systems. In the first part, we discuss the solution of the singular multilinear system using the Drazin inverse of tensor, and in the second part, we address iterative method (higher order Gauss-Seidel) and its convergence analysis, for solving high-dimensional Poisson problems in the multilinear system framework.

\subsection{Drazin-inverse solution} 
Let $\mc{A} \in \mathbb{C}^{\textbf{I}(N) \times \textbf{I}(N) }$ and consider the following singular tensor equation 
\begin{equation}\label{eq1.10}
   \mc{A}\n\mc{X} = \mc{B}, ~~ \mc{X},~ \mc{B} \in \mathbb{C}^{\textbf{I}(N) }.
\end{equation}
If the tensor $\mc{B}\in \mathfrak{R}(\mc{A}^k),$ then Eq. (\ref{eq1.10}) is called the Drazin consistent multilinear system and its solution, we call Drazin-inverse solution or simply solution. 
Such multilinear systems arise in numerous applications in computational science and engineering such as continuum physics and engineering, isotropic
and anisotropic elasticity \cite{lai}. Multilinear systems are also prevalent in solving PDEs numerically. Let us recall the lemma of Drazin consistent multilinear system which was proved in \cite{Wei18}, very recently.

 \begin{lemma}[Lemma 5.1, \cite{Wei18}]\label{wei1.2}
Let $\mc{A}\in\mathbb{C}^{\textbf{I}(N)\times\textbf{I}(N)}$ and $ind(\mc{A})=k$. Then  $\mc{A}^{D}\n\mc{B}$ is a solution of $(\ref{eq1.10})$ if and only if $\mc{B} \in \mathfrak{R}(\mc{A}^k)$. 
\end{lemma}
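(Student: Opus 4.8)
The plan is to reduce both implications to a single absorption identity $\mc{A}\n\mc{A}^{D}\n\mc{A}^{k}=\mc{A}^{k}$ together with the range description $\mathfrak{R}(\mc{A}\n\mc{A}^{D})=\mathfrak{R}(\mc{A}^{k})$ supplied by Theorem \ref{wei1.1}(b). The key observation is that, by associativity of the Einstein product, the phrase ``$\mc{A}^{D}\n\mc{B}$ is a solution of \eqref{eq1.10}'' is literally the equation $\mc{A}\n\mc{A}^{D}\n\mc{B}=\mc{B}$, so the whole lemma amounts to showing that this fixed-point equation holds exactly when $\mc{B}\in\mathfrak{R}(\mc{A}^{k})$.

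First I would record the absorption identity. Applying Definition \ref{dz1.1}(1) with $\mc{X}=\mc{A}^{D}$ gives $\mc{A}^{k+1}\n\mc{A}^{D}=\mc{A}^{k}$, while Definition \ref{dz1.1}(3) gives the commutativity $\mc{A}\n\mc{A}^{D}=\mc{A}^{D}\n\mc{A}$. Combining the two yields $\mc{A}\n\mc{A}^{D}\n\mc{A}^{k}=\mc{A}^{D}\n\mc{A}^{k+1}=\mc{A}^{k}$, which is the identity I will use as the engine of the proof.

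For the reverse implication, I would assume $\mc{B}\in\mathfrak{R}(\mc{A}^{k})$. By Lemma \ref{range-stan} there exists a tensor $\mc{Y}$ with $\mc{B}=\mc{A}^{k}\n\mc{Y}$, whence $\mc{A}\n\mc{A}^{D}\n\mc{B}=\mc{A}\n\mc{A}^{D}\n\mc{A}^{k}\n\mc{Y}=\mc{A}^{k}\n\mc{Y}=\mc{B}$ by the absorption identity, so $\mc{A}^{D}\n\mc{B}$ solves \eqref{eq1.10}. For the forward implication, I would assume $\mc{A}\n\mc{A}^{D}\n\mc{B}=\mc{B}$; then $\mc{B}$ is visibly $(\mc{A}\n\mc{A}^{D})$ applied to $\mc{B}$, hence $\mc{B}\in\mathfrak{R}(\mc{A}\n\mc{A}^{D})$, and invoking $\mathfrak{R}(\mc{A}\n\mc{A}^{D})=\mathfrak{R}(\mc{A}^{k})$ from Theorem \ref{wei1.1}(b) gives $\mc{B}\in\mathfrak{R}(\mc{A}^{k})$.

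I expect no serious obstacle, since the argument is essentially bookkeeping with the defining equations of $\mc{A}^{D}$ and the already-established range equality. The only point requiring a little care is the forward direction, where I must make sure that the equation $\mc{B}=(\mc{A}\n\mc{A}^{D})\n\mc{B}$ genuinely certifies membership in $\mathfrak{R}(\mc{A}\n\mc{A}^{D})$ as defined, and then cite Theorem \ref{wei1.1}(b) rather than re-derive the range identity from scratch; all products reassociate freely, so no grouping subtleties arise.
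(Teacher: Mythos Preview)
Your argument is correct. Note, however, that this lemma is not proved in the present paper: it is quoted verbatim from \cite{Wei18} (as Lemma 5.1 there) and stated here without proof, so there is no in-paper argument to compare against. Your approach---reducing both directions to the projector identity $\mc{A}\n\mc{A}^{D}\n\mc{B}=\mc{B}$ and then invoking $\mathfrak{R}(\mc{A}\n\mc{A}^{D})=\mathfrak{R}(\mc{A}^{k})$ from Theorem~\ref{wei1.1}(b)---is the natural one and matches what is done in \cite{Wei18}. One small remark: citing Lemma~\ref{range-stan} for the reverse implication is a slight mismatch, since that lemma is stated for tensors with a second block of mode indices, whereas here $\mc{B}\in\mathbb{C}^{\textbf{I}(N)}$; you can simply appeal directly to the definition of $\mathfrak{R}(\mc{A}^{k})$ to obtain $\mc{B}=\mc{A}^{k}\n\mc{Y}$.
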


In addition to this, the authors of \cite{Wei18} discussed the general solution of \eqref{eq1.10}, as follows.

\begin{theorem}[Theorem 5.2, \cite{Wei18}]\label{wei1.3}
Let $\mc{A}\in\mathbb{C}^{\textbf{I}(N)\times\textbf{I}(N)}$ and $ind(\mc{A})=k$. If $\mc{B} \in \mathfrak{R}(\mc{A}^k),$ then the general solution of $(\ref{eq1.10})$ is of the form $\mc{X} = \mc{A}^{D}\n\mc{B}+(I-\mc{A}^{D}\n\mc{A})\n\mc{Z}$, for any arbitrary tensor $\mc{Z}\in \mathfrak{R}(\mc{A}^{k-1})+\mc{N}(\mc{A})$.
\end{theorem}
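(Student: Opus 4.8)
The plan is to follow the classical ``particular solution plus homogeneous solution'' decomposition for linear equations, and then to identify the homogeneous part with the prescribed affine family. First I would observe that since $\mc{B}\in\mathfrak{R}(\mc{A}^k)$, Lemma \ref{wei1.2} already guarantees that $\mc{X}_0 := \mc{A}^{D}\n\mc{B}$ is a particular solution of \eqref{eq1.10}. Because $\mc{A}\n\mc{X}=\mc{B}$ is linear, its full solution set is $\mc{X}_0+\mc{N}(\mc{A})$: if $\mc{X}$ is any solution then $\mc{A}\n(\mc{X}-\mc{X}_0)=\mc{O}$, so $\mc{X}-\mc{X}_0\in\mc{N}(\mc{A})$, and conversely $\mc{X}_0+\mc{K}$ solves the system for every $\mc{K}\in\mc{N}(\mc{A})$. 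Thus the theorem reduces to the set identity
\[
\{(I-\mc{A}^{D}\n\mc{A})\n\mc{Z} : \mc{Z}\in\mathfrak{R}(\mc{A}^{k-1})+\mc{N}(\mc{A})\} = \mc{N}(\mc{A}).
\]

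For the inclusion $\subseteq$, I would compute $\mc{A}\n(I-\mc{A}^{D}\n\mc{A})\n\mc{Z}$ and show it vanishes on the prescribed class of $\mc{Z}$; by linearity it suffices to treat the two summands separately. If $\mc{Z}=\mc{K}\in\mc{N}(\mc{A})$, then $\mc{A}\n\mc{Z}=\mc{O}$ forces the whole expression to be $\mc{O}$. If $\mc{Z}=\mc{A}^{k-1}\n\mc{W}\in\mathfrak{R}(\mc{A}^{k-1})$, then using $\mc{A}\n\mc{A}^{D}=\mc{A}^{D}\n\mc{A}$ together with the defining relation $\mc{A}^{k+1}\n\mc{A}^{D}=\mc{A}^{k}$ (equivalently $\mc{A}^{D}\n\mc{A}^{k+1}=\mc{A}^{k}$) I obtain $\mc{A}\n(I-\mc{A}^{D}\n\mc{A})\n\mc{Z}=\mc{A}^{k}\n\mc{W}-\mc{A}^{D}\n\mc{A}^{k+1}\n\mc{W}=\mc{O}$. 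Hence $(I-\mc{A}^{D}\n\mc{A})\n\mc{Z}\in\mc{N}(\mc{A})$ for every admissible $\mc{Z}$. This step is the crux, and I expect it to be the main obstacle: the key insight is that restricting $\mc{Z}$ to $\mathfrak{R}(\mc{A}^{k-1})+\mc{N}(\mc{A})$ is exactly what forces the projection $I-\mc{A}^{D}\n\mc{A}=I-\mc{A}\n\mc{A}^{D}$ to land in the smaller space $\mc{N}(\mc{A})$ rather than in its full range $\mc{N}(\mc{A}^{k})$ (Theorem \ref{wei1.1}); without this restriction one obtains only $\mc{N}(\mc{A}^{k})$, which coincides with $\mc{N}(\mc{A})$ precisely when $k=1$.

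For the reverse inclusion $\supseteq$, given $\mc{K}\in\mc{N}(\mc{A})$ I would simply take $\mc{Z}=\mc{K}$, which lies in $\mathfrak{R}(\mc{A}^{k-1})+\mc{N}(\mc{A})$, and note that $(I-\mc{A}^{D}\n\mc{A})\n\mc{K}=\mc{K}-\mc{A}^{D}\n(\mc{A}\n\mc{K})=\mc{K}$, so every null-space element is attained. Finally, to close the loop I would verify directly that each $\mc{X}=\mc{A}^{D}\n\mc{B}+(I-\mc{A}^{D}\n\mc{A})\n\mc{Z}$ genuinely solves \eqref{eq1.10}: the second summand is annihilated by $\mc{A}$ by the computation above, while $\mc{A}\n\mc{A}^{D}\n\mc{B}=\mc{B}$ holds because $\mc{B}\in\mathfrak{R}(\mc{A}^{k})$ (writing $\mc{B}=\mc{A}^{k}\n\mc{C}$ and invoking $\mc{A}^{D}\n\mc{A}^{k+1}=\mc{A}^{k}$). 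Apart from the inclusion $\subseteq$, everything else is routine bookkeeping with the two commuting identities $\mc{A}\n\mc{A}^{D}=\mc{A}^{D}\n\mc{A}$ and $\mc{A}^{k+1}\n\mc{A}^{D}=\mc{A}^{k}$.
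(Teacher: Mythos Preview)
The paper does not supply its own proof of this statement; it is quoted as Theorem~5.2 of \cite{Wei18} and used without argument. Your proof is correct: reducing to the set identity $\{(I-\mc{A}^{D}\n\mc{A})\n\mc{Z} : \mc{Z}\in\mathfrak{R}(\mc{A}^{k-1})+\mc{N}(\mc{A})\}=\mc{N}(\mc{A})$ is the right move, both inclusions are verified cleanly, and you correctly isolate the key computation $\mc{A}\n\mc{A}^{D}\n\mc{A}^{k}=\mc{A}^{D}\n\mc{A}^{k+1}=\mc{A}^{k}$ as the place where the restriction on $\mc{Z}$ is actually used.
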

 
We now show the existence and uniqueness of the Drazin inverse solution in the following theorem.

\begin{lemma}\label{lemma5.1}
Let  $\mc{A} \in \mathbb{C}^{\textbf{I}(N) \times \textbf{I}(N) }$ and ind$(\mc{A})=k.$ If $\mc{X}\in\mathfrak{R}(\mc{A}^k),$ then the singular tensor equation
\begin{equation*}
    \mc{A}\n\mc{X} = \mc{B},
\end{equation*}
has one and only one solution, and is given by 
$\mc{X} = \mc{A}^{D}\n\mc{B}.
$
\end{lemma}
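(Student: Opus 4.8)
The plan is to establish both existence and uniqueness of a solution lying in $\mathfrak{R}(\mc{A}^k)$, with the explicit candidate $\mc{A}^{D}\n\mc{B}$. First I would dispose of existence. By Theorem \ref{wei1.1}(a) we have $\mathfrak{R}(\mc{A}^{D}) = \mathfrak{R}(\mc{A}^{k})$, so the candidate $\mc{A}^{D}\n\mc{B}$ automatically lies in $\mathfrak{R}(\mc{A}^{k})$. Moreover the hypothesis forces $\mc{B}\in\mathfrak{R}(\mc{A}^{k})$: if $\mc{X}\in\mathfrak{R}(\mc{A}^{k})$ and $\mc{A}\n\mc{X}=\mc{B}$, then $\mc{B}=\mc{A}\n\mc{X}\in\mathfrak{R}(\mc{A}^{k+1})=\mathfrak{R}(\mc{A}^{k})$ by Theorem \ref{weitm3.2}. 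Consequently Lemma \ref{wei1.2} applies and tells us $\mc{A}^{D}\n\mc{B}$ is indeed a solution of $\mc{A}\n\mc{X}=\mc{B}$.

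The core of the argument is a single identity: $\mc{A}^{D}\n\mc{A}$ acts as the identity on the subspace $\mathfrak{R}(\mc{A}^{k})$, i.e. $\mc{A}^{D}\n\mc{A}\n\mc{X}=\mc{X}$ for every $\mc{X}\in\mathfrak{R}(\mc{A}^{k})$. To prove it I would take such an $\mc{X}$ and invoke Lemma \ref{range-stan} to write $\mc{X}=\mc{A}^{k}\n\mc{U}$ for some tensor $\mc{U}$. Then, combining the commutativity $\mc{A}\n\mc{A}^{D}=\mc{A}^{D}\n\mc{A}$ with the defining relation $\mc{A}^{k+1}\n\mc{A}^{D}=\mc{A}^{k}$ of Definition \ref{dz1.1}, I compute $\mc{A}^{D}\n\mc{A}\n\mc{X}=\mc{A}^{D}\n\mc{A}^{k+1}\n\mc{U}=\mc{A}^{k}\n\mc{U}=\mc{X}$.

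With this identity in hand, uniqueness is immediate and simultaneously pins down the value of the solution. If $\mc{X}\in\mathfrak{R}(\mc{A}^{k})$ satisfies $\mc{A}\n\mc{X}=\mc{B}$, then $\mc{X}=\mc{A}^{D}\n\mc{A}\n\mc{X}=\mc{A}^{D}\n(\mc{A}\n\mc{X})=\mc{A}^{D}\n\mc{B}$. Thus every solution in $\mathfrak{R}(\mc{A}^{k})$ coincides with $\mc{A}^{D}\n\mc{B}$, which delivers existence and uniqueness in one stroke.

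I expect the only genuine obstacle to be the projection identity $\mc{A}^{D}\n\mc{A}\n\mc{X}=\mc{X}$ on $\mathfrak{R}(\mc{A}^{k})$; everything else is routine bookkeeping with the Einstein product. An alternative, slightly longer route to uniqueness is to subtract two solutions $\mc{X}_{1},\mc{X}_{2}\in\mathfrak{R}(\mc{A}^{k})$, note $\mc{X}_{1}-\mc{X}_{2}\in\mc{N}(\mc{A})\subseteq\mc{N}(\mc{A}^{k})$, and use $\mathfrak{R}(\mc{A}^{k})\cap\mc{N}(\mc{A}^{k})=\{\mc{O}\}$, the latter following from the idempotency of $\mc{A}\n\mc{A}^{D}$ and the splitting recorded in Theorem \ref{wei1.1}(b); however, the direct computation above is cleaner and avoids establishing the full direct-sum decomposition separately.
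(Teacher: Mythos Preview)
Your proof is correct and follows essentially the same approach as the paper. The paper's argument hinges on precisely your projection identity: writing $\mc{X}=\mc{A}^{k}\n\mc{Y}$ and computing $\mc{X}=\mc{A}^{k}\n\mc{Y}=\mc{A}^{k+1}\n\mc{A}^{D}\n\mc{Y}=\mc{A}^{D}\n\mc{A}\n\mc{X}=\mc{A}^{D}\n\mc{B}$; the paper then appends the $\mathfrak{R}(\mc{A}^{k})\cap\mc{N}(\mc{A}^{k})=\{0\}$ argument (your alternative route) as a second, somewhat redundant, pass at uniqueness, whereas you correctly observe that the projection identity already settles it and you are more explicit than the paper about why $\mc{A}^{D}\n\mc{B}$ itself lies in $\mathfrak{R}(\mc{A}^{k})$ and solves the equation.
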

\begin{proof}
Let $\mc{X} \in \mathfrak{R}(\mc{A}^k).$ So there exists a tensor $\mc{Y} \in \mathbb{C}^{\textbf{I}(N) }$ such that  $\mc{X} = \mc{A}^{k}\n\mc{Y}.$ Now using Definition \ref{dz1.1}, we get
\begin{equation*}
\mc{X}= \mc{A}^k\n\mc{Y} =\mc{A}^{k+1}\n\mc{A}^{D}\n\mc{Y}= \mc{A}^{D}\n\mc{A}^{k+1}\n\mc{Y}=\mc{A}^{D}\n\mc{A}\n\mc{X}=\mc{A}^{D}\n\mc{B}.  
\end{equation*}
Further, in view of the Lemma \ref{wei1.2} and Theorem \ref{wei1.1} $(a)$, we obtain $\mc{X}-\mc{A}^{D}\n\mc{B}\in  \mathfrak{R}(\mc{A}^k)$. Again by Theorem \ref{wei1.3} and Theorem \ref{wei1.1} $(b)$ we get $\mc{X}-\mc{A}^{D}\n\mc{B}\in \mc{N}(\mc{A}^k)$. Hence $\mc{X}-\mc{A}^{D}\n\mc{B}\in \mathfrak{R}(\mc{A}^k)\cap \mc{N}(\mc{A}^k) = \{0\}$. Therefore, the solution  $\mc{A}^{D}\n\mc{B}$ is unique.
\end{proof}

In case of index one  $(k=1),$  the result is stated in the next corollary.

\begin{corollary}\label{cor1.12}
Let $\mc{A} \in \mathbb{C}^{\textbf{I}(N) \times \textbf{I}(N) }$ and ind$(\mc{A})=1.$ If $\mc{X} \in \mathfrak{R}(\mc{A}),$ then singular tensor Eq. (\ref{eq1.10}) 
has  unique solution, and is given by 
$ \mc{X} = \mc{A}^{\#}\n\mc{B}.$
\end{corollary}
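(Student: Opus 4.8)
The plan is to obtain this corollary as the immediate specialization of Lemma \ref{lemma5.1} to the case $k=1$. First I would invoke Definition \ref{dz1.1}, which states explicitly that when $ind(\mc{A})=1$ the Drazin inverse $\mc{A}^D$ coincides with the group inverse $\mc{A}^\#$. Next I would observe that the standing hypothesis $\mc{X}\in\mathfrak{R}(\mc{A}^k)$ of Lemma \ref{lemma5.1} reduces, for $k=1$, to precisely the hypothesis $\mc{X}\in\mathfrak{R}(\mc{A})$ assumed in this corollary, using the trivial identity $\mathfrak{R}(\mc{A}^1)=\mathfrak{R}(\mc{A})$.

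With these identifications in place, Lemma \ref{lemma5.1} applied with $k=1$ immediately yields that the singular tensor equation $\mc{A}\n\mc{X}=\mc{B}$ has one and only one solution, namely $\mc{X}=\mc{A}^D\n\mc{B}=\mc{A}^\#\n\mc{B}$. Since every step is a direct substitution into an already-proved result, there is no substantive obstacle here; the only point requiring attention is the notational bookkeeping that $\mc{A}^D$ collapses to $\mc{A}^\#$ under the index-one assumption, which is exactly what Definition \ref{dz1.1} guarantees. Thus the corollary follows verbatim from Lemma \ref{lemma5.1} upon setting $k=1$ and rewriting $\mc{A}^D$ as $\mc{A}^\#$.
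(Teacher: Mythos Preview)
Your proposal is correct and matches the paper's approach exactly: the paper presents this corollary as the direct specialization of Lemma~\ref{lemma5.1} to the index-one case, and your identification $\mc{A}^D=\mc{A}^\#$ via Definition~\ref{dz1.1} together with $\mathfrak{R}(\mc{A}^1)=\mathfrak{R}(\mc{A})$ is precisely the substitution needed.
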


We now discuss some results concerning the equivalent multilinear systems of the Drazin inverse of tensors.  In particular, the relationship between the solutions of the multilinear system \eqref{eq1.10} and the following tensor-based Drazin normal equation, 
\begin{equation}\label{mt1.1}
\mc{A}^{k+1}\n\mc{X} = \mc{A}^{k}\n\mc{B}.   
\end{equation}
is analyzed, i.e., if $\mc{B}\in\mathfrak{R}(\mc{A}^k)$ and ind$(\mc{A})=k,$ then it is easy to verify that,  each solution of Eq. \eqref{eq1.10} is also a solution of Eq. \eqref{mt1.1} and vice versa. In spite of this fact,
we discuss the solution of the Drazin normal equation for tensor, as follows. 

\begin{theorem}\label{mrt1.1}
Let  $\mc{A} \in \mathbb{C}^{\textbf{I}(N) \times \textbf{I}(N) }$, $\mc{B}\in\mathfrak{R}(\mc{A}^k)$ and ind$(\mc{A})=k.$ Then the set of all solutions of Eq. $(\ref{mt1.1})$ is given by
\begin{equation*}
   \mc{X} = \mc{A}^{D}\n\mc{B}+\mc{N}({\mc{A}^{k}}). 
\end{equation*}
Moreover, $\mc{X} = \mc{A}^{D}\n\mc{B}$ is the unique solution of $(\ref{mt1.1})$ in $\mathfrak{R}(\mc{A}^k)$.
\end{theorem}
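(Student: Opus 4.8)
The plan is to treat Eq.~\eqref{mt1.1} as an inhomogeneous linear tensor equation in $\mc{X}$ and to decompose its solution set into a particular solution plus the null space of the associated homogeneous operator.

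First I would exhibit $\mc{A}^{D}\n\mc{B}$ as a particular solution. By property $(1)$ of Definition~\ref{dz1.1} we have $\mc{A}^{k+1}\n\mc{A}^{D} = \mc{A}^{k}$, so that $\mc{A}^{k+1}\n(\mc{A}^{D}\n\mc{B}) = \mc{A}^{k}\n\mc{B}$, confirming that $\mc{A}^{D}\n\mc{B}$ solves \eqref{mt1.1}. Since \eqref{mt1.1} is linear in $\mc{X}$, an arbitrary tensor $\mc{X}$ is a solution if and only if $\mc{A}^{k+1}\n(\mc{X}-\mc{A}^{D}\n\mc{B}) = \mc{O}$, i.e. $\mc{X} - \mc{A}^{D}\n\mc{B}\in\mc{N}(\mc{A}^{k+1})$. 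Invoking Theorem~\ref{weitm3.2} with $l = k+1\geq k$ gives $\mc{N}(\mc{A}^{k+1}) = \mc{N}(\mc{A}^{k})$, which yields the claimed description $\mc{X} = \mc{A}^{D}\n\mc{B} + \mc{N}(\mc{A}^{k})$ of the full solution set.

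For the uniqueness statement in $\mathfrak{R}(\mc{A}^{k})$, I would first note that $\mc{A}^{D}\n\mc{B}\in\mathfrak{R}(\mc{A}^{D}) = \mathfrak{R}(\mc{A}^{k})$ by Theorem~\ref{wei1.1}(a), so the particular solution itself lies in $\mathfrak{R}(\mc{A}^{k})$. If $\mc{X}_{1},\mc{X}_{2}\in\mathfrak{R}(\mc{A}^{k})$ are both solutions, then by the previous paragraph $\mc{X}_{1}-\mc{X}_{2}\in\mathfrak{R}(\mc{A}^{k})\cap\mc{N}(\mc{A}^{k})$, so it suffices to prove this intersection is trivial. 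The key step is to verify $\mathfrak{R}(\mc{A}^{k})\cap\mc{N}(\mc{A}^{k}) = \{\mc{O}\}$: taking $\mc{Z} = \mc{A}^{k}\n\mc{W}$ with $\mc{A}^{k}\n\mc{Z} = \mc{O}$ gives $\mc{A}^{2k}\n\mc{W} = \mc{O}$, and applying Theorem~\ref{weitm3.2} once more with $l = 2k\geq k$ gives $\mc{W}\in\mc{N}(\mc{A}^{2k}) = \mc{N}(\mc{A}^{k})$, whence $\mc{Z} = \mc{A}^{k}\n\mc{W} = \mc{O}$. Consequently $\mc{X}_{1} = \mc{X}_{2}$, so $\mc{A}^{D}\n\mc{B}$ is the unique solution of \eqref{mt1.1} lying in $\mathfrak{R}(\mc{A}^{k})$.

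I expect the verification that $\mathfrak{R}(\mc{A}^{k})\cap\mc{N}(\mc{A}^{k})=\{\mc{O}\}$ to be the main obstacle, since the remaining steps are direct substitutions using Definition~\ref{dz1.1}. This intersection property, which reflects the core--nilpotent direct sum decomposition underlying the Drazin inverse, is exactly what forces uniqueness within $\mathfrak{R}(\mc{A}^{k})$, and it is handled cleanly by the index identity of Theorem~\ref{weitm3.2} applied at the exponent $2k$. I note that the argument parallels the one already used in the proof of Lemma~\ref{lemma5.1}, where the same trivial-intersection fact was invoked.
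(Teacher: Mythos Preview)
Your proof is correct and follows essentially the same route as the paper: exhibit $\mc{A}^{D}\n\mc{B}$ as a particular solution via $\mc{A}^{k+1}\n\mc{A}^{D}=\mc{A}^{k}$, reduce the general solution to $\mc{A}^{D}\n\mc{B}+\mc{N}(\mc{A}^{k+1})=\mc{A}^{D}\n\mc{B}+\mc{N}(\mc{A}^{k})$ using Theorem~\ref{weitm3.2}, and conclude uniqueness in $\mathfrak{R}(\mc{A}^{k})$ from $\mathfrak{R}(\mc{A}^{k})\cap\mc{N}(\mc{A}^{k})=\{\mc{O}\}$. Your version is in fact slightly more complete than the paper's, since you explicitly verify the trivial-intersection fact (which the paper simply asserts, here and in Lemma~\ref{lemma5.1}) and you explicitly justify $\mc{A}^{D}\n\mc{B}\in\mathfrak{R}(\mc{A}^{k})$ via Theorem~\ref{wei1.1}(a).
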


\begin{proof}
From the definition of the Drazin inverse of a tensor  $\mc{A} \in \mathbb{C}^{\textbf{I}(N) \times \textbf{I}(N) }$, we have
\begin{equation*}
\mc{A}^{k+1}*_N(\mc{X}-\mc{A}^{D}\n\mc{B}) = \mc{A}^{k}\n\mc{A}\n\mc{X}-\mc{A}^{k}\n\mc{A}\n\mc{A}^{D}\n\mc{B} = \mc{A}^{k}\n\mc{B}-\mc{A}^{k}\n\mc{B} = \mc{O}.
\end{equation*}
Using the Theorem \ref{weitm3.2}, we obtain $\mc{X}-\mc{A}^{D}\n\mc{B}\in \mc{N}({\mc{A}^{k+1}}) = \mc{N}({\mc{A}^{k}})$. Hence $\mc{X} = \mc{A}^{D}\n\mc{B}+\mc{N}({\mc{A}^{k}}).$ To show the uniqueness in $\mathfrak{R}(\mc{A}^k),$ let  $\mc{U}\in\mathfrak{R}(\mc{A}^k)$ be any solution of Eq. (\ref{mt1.1}).  Now $\mc{U}-\mc{A}^{D}\n\mc{B}\in\mathfrak{R}(\mc{A}^k)$, and $\mc{A}^{k+1}\n\mc{U}-\mc{A}^{k+1}\n\mc{A}^{D}\n\mc{B} = \mc{A}^{k}\n\mc{B}-\mc{A}^{k}\n\mc{B} = \mc{O}$. Thus  $\mc{U}-\mc{A}^{D}\n\mc{B}\in\mc{N}({\mc{A}^{k}})$. Therefore, $\mc{U}-\mc{A}^{D}\n\mc{B}\in\mathfrak{R}(\mc{A}^k)\cap\mc{N}({\mc{A}^{k}}) = \{0\}$. Hence completes the proof. 
\end{proof}
One can find the matrix version of the above result in \cite{milj,wei16}. Further, we discuss the Drazin solution of another normal equation, called modified Drazin normal equation and is defined by the following tensor equation 
\begin{equation}\label{eq1.30}
 \mc{A}^{2k}\n\mc{X} = \mc{A}^{k}\n\mc{B},~\mc{A} \in \mathbb{C}^{\textbf{I}(N) \times \textbf{I}(N) },~\mc{X},~\mc{B} \in \mathbb{C}^{\textbf{I}(N) }. 
\end{equation}

\begin{theorem}\label{m.18}
Let  $\mc{A} \in \mathbb{C}^{\textbf{I}(N) \times \textbf{I}(N) }$, $\mc{B}\in\mathfrak{R}(\mc{A}^k)$ and ind$(\mc{A})=k.$ Then, the set of all solutions of Eq. (\ref{eq1.30}) is given by
\begin{equation*}
 \mc{X} = (\mc{A}^{k})^{D}\n\mc{B}+\mc{N}(\mc{A}^{k}).  
\end{equation*}
\end{theorem}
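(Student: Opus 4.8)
The plan is to follow the same two-part template used in the proof of Theorem \ref{mrt1.1}: first exhibit $(\mc{A}^{k})^{D}\n\mc{B}$ as one particular solution of \eqref{eq1.30}, then show that every solution differs from it by an element of $\mc{N}(\mc{A}^{k})$, and conversely that every such element yields a solution.

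First I would record the key identity $\mc{A}^{2k}\n(\mc{A}^{k})^{D} = \mc{A}^{k}$. The quickest route uses Theorem \ref{mr1.2}(a), which gives $(\mc{A}^{k})^{D} = (\mc{A}^{D})^{k}$; writing $\mc{A}^{2k} = \mc{A}^{k}\n\mc{A}^{k}$ and applying Lemma \ref{mr11.2} with $p=k$ to collapse $\mc{A}^{k}\n(\mc{A}^{D})^{k} = \mc{A}\n\mc{A}^{D}$, I obtain $\mc{A}^{2k}\n(\mc{A}^{D})^{k} = \mc{A}^{k}\n\mc{A}\n\mc{A}^{D} = \mc{A}^{k+1}\n\mc{A}^{D} = \mc{A}^{k}$, where the last equality is property $(1)$ of Definition \ref{dz1.1}. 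Equivalently, one notes that Theorem \ref{weitm3.2} forces $\text{ind}(\mc{A}^{k})=1$, so the first Drazin equation for $\mc{A}^{k}$ reads $(\mc{A}^{k})^{2}\n(\mc{A}^{k})^{D} = \mc{A}^{k}$. Post-multiplying by $\mc{B}$ then shows $\mc{A}^{2k}\n((\mc{A}^{k})^{D}\n\mc{B}) = \mc{A}^{k}\n\mc{B}$, so $(\mc{A}^{k})^{D}\n\mc{B}$ indeed solves \eqref{eq1.30}.

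Next, given an arbitrary solution $\mc{X}$, I would subtract the particular solution: since both $\mc{A}^{2k}\n\mc{X}$ and $\mc{A}^{2k}\n((\mc{A}^{k})^{D}\n\mc{B})$ equal $\mc{A}^{k}\n\mc{B}$, we get $\mc{A}^{2k}\n(\mc{X}-(\mc{A}^{k})^{D}\n\mc{B}) = \mc{O}$, that is $\mc{X}-(\mc{A}^{k})^{D}\n\mc{B}\in\mc{N}(\mc{A}^{2k})$. Invoking Theorem \ref{weitm3.2} with $l=2k\geq k$ gives $\mc{N}(\mc{A}^{2k}) = \mc{N}(\mc{A}^{k})$, whence $\mc{X}\in (\mc{A}^{k})^{D}\n\mc{B}+\mc{N}(\mc{A}^{k})$. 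Conversely, for any $\mc{Z}\in\mc{N}(\mc{A}^{k})$ one has $\mc{A}^{2k}\n\mc{Z} = \mc{A}^{k}\n(\mc{A}^{k}\n\mc{Z}) = \mc{O}$, so $(\mc{A}^{k})^{D}\n\mc{B}+\mc{Z}$ is again a solution by the identity of the previous paragraph. This establishes equality of the two solution sets.

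No step here is genuinely difficult; the only point requiring care is the identity $\mc{A}^{2k}\n(\mc{A}^{k})^{D} = \mc{A}^{k}$, since it is the hinge that simultaneously makes $(\mc{A}^{k})^{D}\n\mc{B}$ a solution and reduces the homogeneous part to $\mc{N}(\mc{A}^{2k})$. I would therefore cite Theorem \ref{mr1.2}(a) and Lemma \ref{mr11.2} (or the index-one observation) explicitly at that point, and justify the reduction $\mc{N}(\mc{A}^{2k})=\mc{N}(\mc{A}^{k})$ via Theorem \ref{weitm3.2} rather than treating it as self-evident.
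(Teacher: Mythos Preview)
Your proposal is correct and follows essentially the same route as the paper: both verify $\mc{A}^{2k}\n(\mc{A}^{k})^{D}\n\mc{B}=\mc{A}^{k}\n\mc{B}$ (the paper via Corollary~\ref{mr1.8} identifying $(\mc{A}^{k})^{D}=(\mc{A}^{k})^{\#}$, you via Theorem~\ref{mr1.2}(a) and Lemma~\ref{mr11.2}), then subtract to land in $\mc{N}(\mc{A}^{2k})=\mc{N}(\mc{A}^{k})$. Your write-up is in fact slightly more complete, since you explicitly check the converse inclusion (that every $(\mc{A}^{k})^{D}\n\mc{B}+\mc{Z}$ with $\mc{Z}\in\mc{N}(\mc{A}^{k})$ is a solution), which the paper leaves implicit.
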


\begin{proof}
The tensor equation (\ref{eq1.30}) is alwasy consistent, since  $\mc{A}^{k}\n\mc{B} \in \mc{R}(\mc{A}^{k}) = \mc{R}(\mc{A}^{2k})$. By Theorem \ref{mr1.2} (a) and Corollary \ref{mr1.8}, we have $(\mc{A}^{k})^{D}\n\mc{B} = (\mc{A}^{k})^{\#}\n\mc{B} \in \mc{R}(\mc{A}^{k}) $. Now $\mc{A}^{2k}*_N(\mc{X}-(\mc{A}^{k})^{D}\n\mc{B}) = \mc{A}^{2k}\n\mc{X}-\mc{A}^{k}\n\mc{A}^{k}\n(\mc{A}^{k})^{\#}\n\mc{B} = \mc{A}^{k}\n\mc{B}-\mc{A}^{k}\n\mc{B} = \mc{O}$. Hence $\mc{X}-(\mc{A}^{k})^{D}\n\mc{B}\in\mc{N}({\mc{A}^{2k}}) = \mc{N}({\mc{A}^{k}})$. So $\mc{X} = (\mc{A}^{k})^{D}+\mc{N}({\mc{A}^{k}})$ is the solution of $(\ref{eq1.30})$. 
\end{proof}
Using the method as in the proof of Theorem \ref{mrt1.1}, one can show  $(\mc{A}^{k})^{D}\n\mc{B}$ is the unique solution of Eq. (\ref{eq1.30}) in $\mathfrak{R}(\mc{A}^k).$ 

In the following, we present an example to illustrate our result. In order to show how the Drazin inverse of
tensors are employed in the two dimensional Poisson problem. 

\begin{example}\label{pde1}
Consider the following partial differential equation
\begin{equation}\label{PDE11}
    \frac{\partial^2 u}{\partial x^2}+\frac{\partial^2 u}{\partial y^2}=f(x,y),~(x,y)\in \Omega=[0,1]\times [0,1]
\end{equation}
 with Neumann boundary conditions. If we apply 5-point stencil central difference scheme on a uniform grid with $m^2$ nodes, we obtain the following tensor equation 
 \begin{equation}\label{1stexam}
     \mc{A}*_2\mc{X}=\mc{B}, ~\mc{X}=(u_{kl})\in\mathbb{R}^{m\times m} \textnormal{~and~}  \mc{B}=(b_{ij})\in\mathbb{R}^{m\times m}, 
 \end{equation}
and the tensor $\mc{A}=(a_{ijkl})\in\mathbb{R}^{m\times m\times m\times m}$ is of the form 
\begin{equation}\label{F-ex-ten}
\mc{A}=\mc{I}_m\kronecker \mc{P} +\mc{Q}\kronecker \mc{I}_m+\mc{D},
\end{equation}
where $\mc{I}_m \in \mathbb{R}^{m\times m}$ is the second order identity tensor. The second order tensors $\mc{P}\in\mathbb{R}^{m\times m}$ and $\mc{Q}\in\mathbb{R}^{m\times m}$ are of the form
\begin{equation*}
    \mc{P}=tridiagonal\left(-1,0,-1\right) = \left( \begin{array}{cccc}
0 & -1 & & 0\\
-1 & \ddots & \ddots & \\
& \ddots & \ddots & -1 \\
0 & & -1 & 0 \end{array} \right)=
 \mc{Q}.
\end{equation*}
Further, the tensor $\mc{D}\in\mathbb{R}^{m\times m\times m\times m}$ is a diagonal tensor, where the diagonal elements will change with respect to number of grid points.   
From the representation \eqref{F-ex-ten} (the coefficient tensor $\mc{A}$), it is clear that $ind(\mc{A}) = 1$. Thus the solution of the multilinear system \eqref{1stexam} becomes $\mc{X}=\mc{A}^{\#}*_2\mc{B}.$  
We consider a tensor $\mc{B}$ from $\mathfrak{R}(A),$ and calculate the approximate solution of the partial differential equation \eqref{PDE11} with  different choices of $m$, which are presented in Figure 1.
\end{example}

\begin{figure}[t!]\label{exa-1}
\begin{center}
\begin{tabular}{cc}
\subfigure[] { \includegraphics[width=0.43\textwidth]{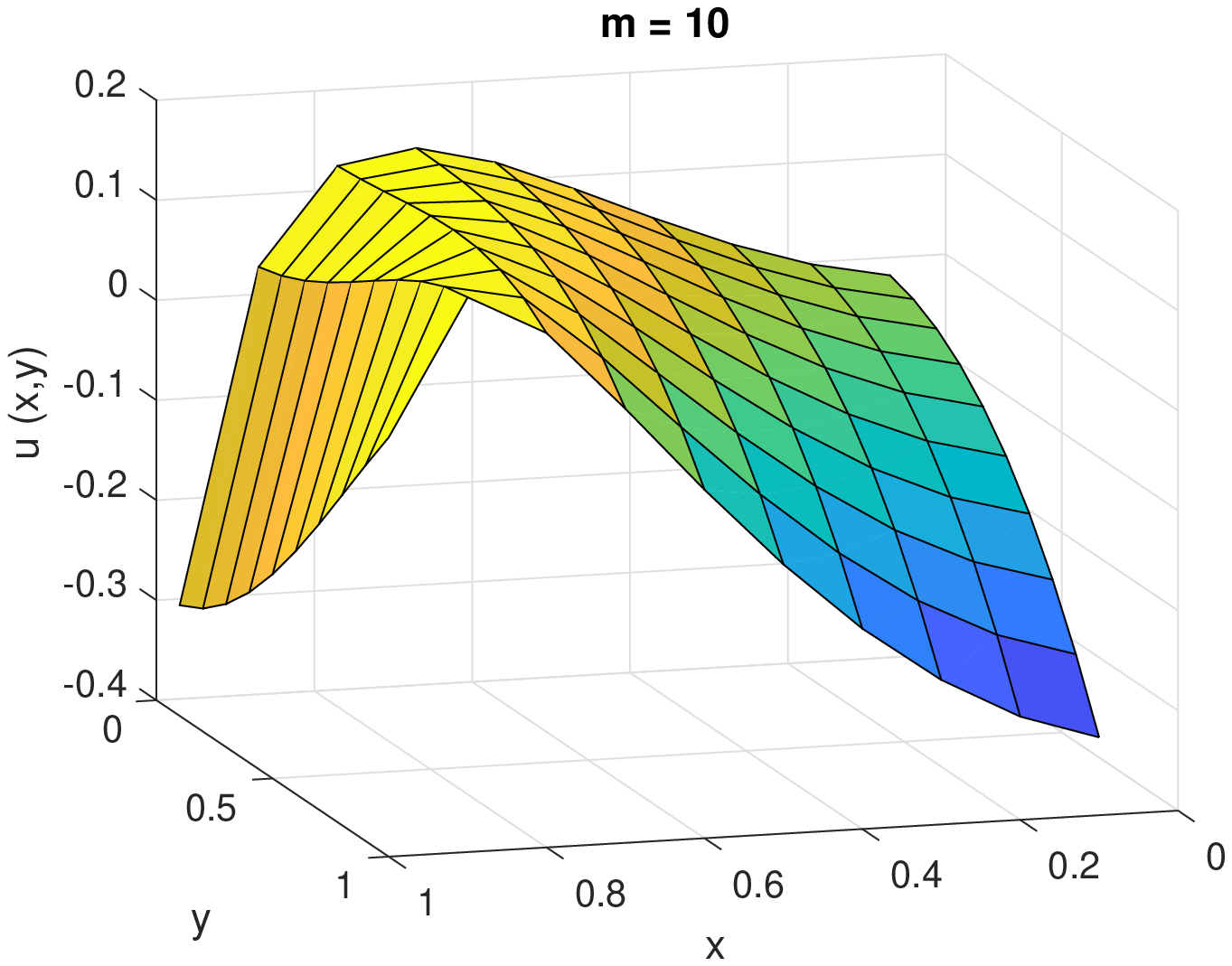}}&
\subfigure[] { \includegraphics[width=0.43\textwidth]{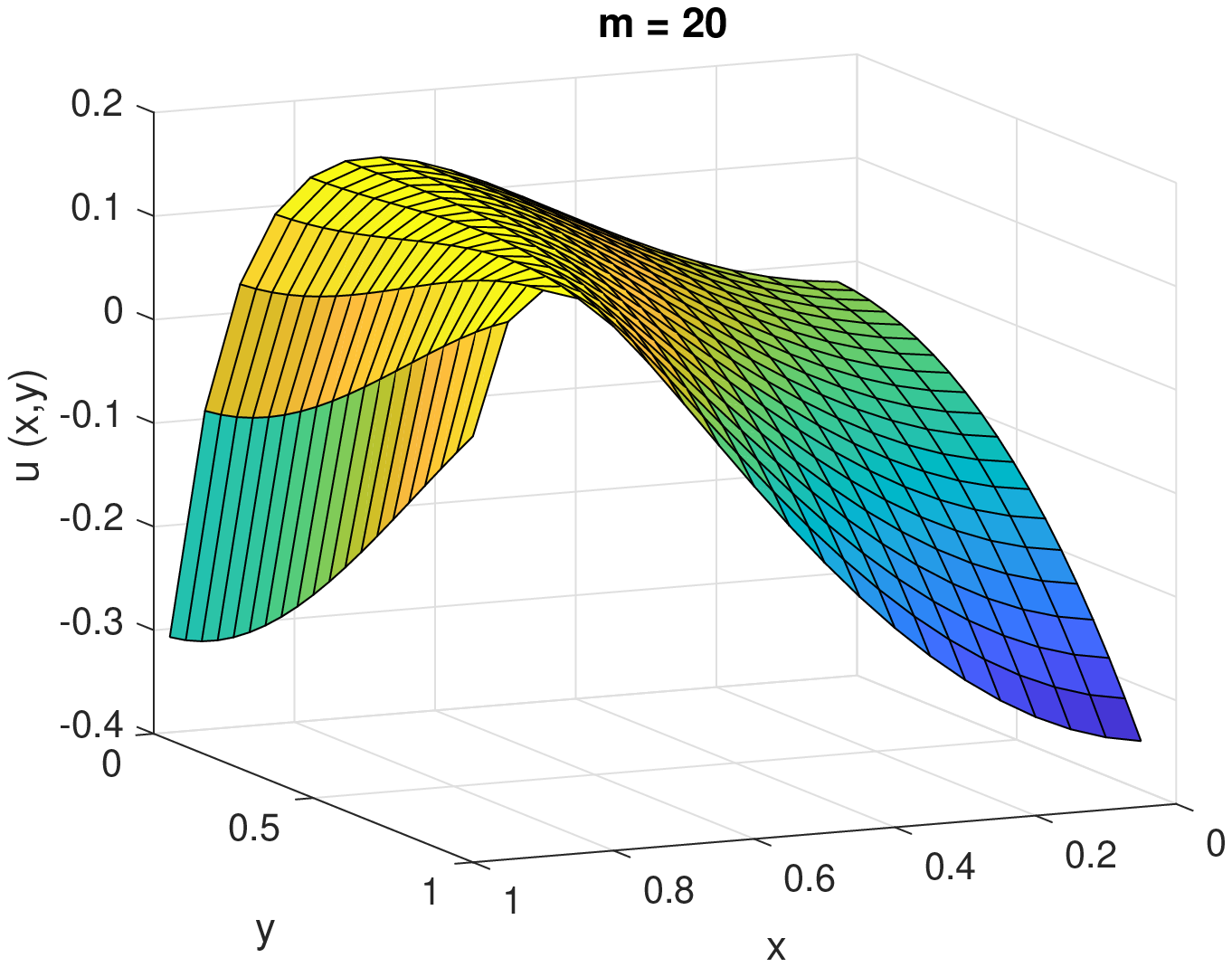}}\\
\subfigure[] { \includegraphics[width=0.43\textwidth]{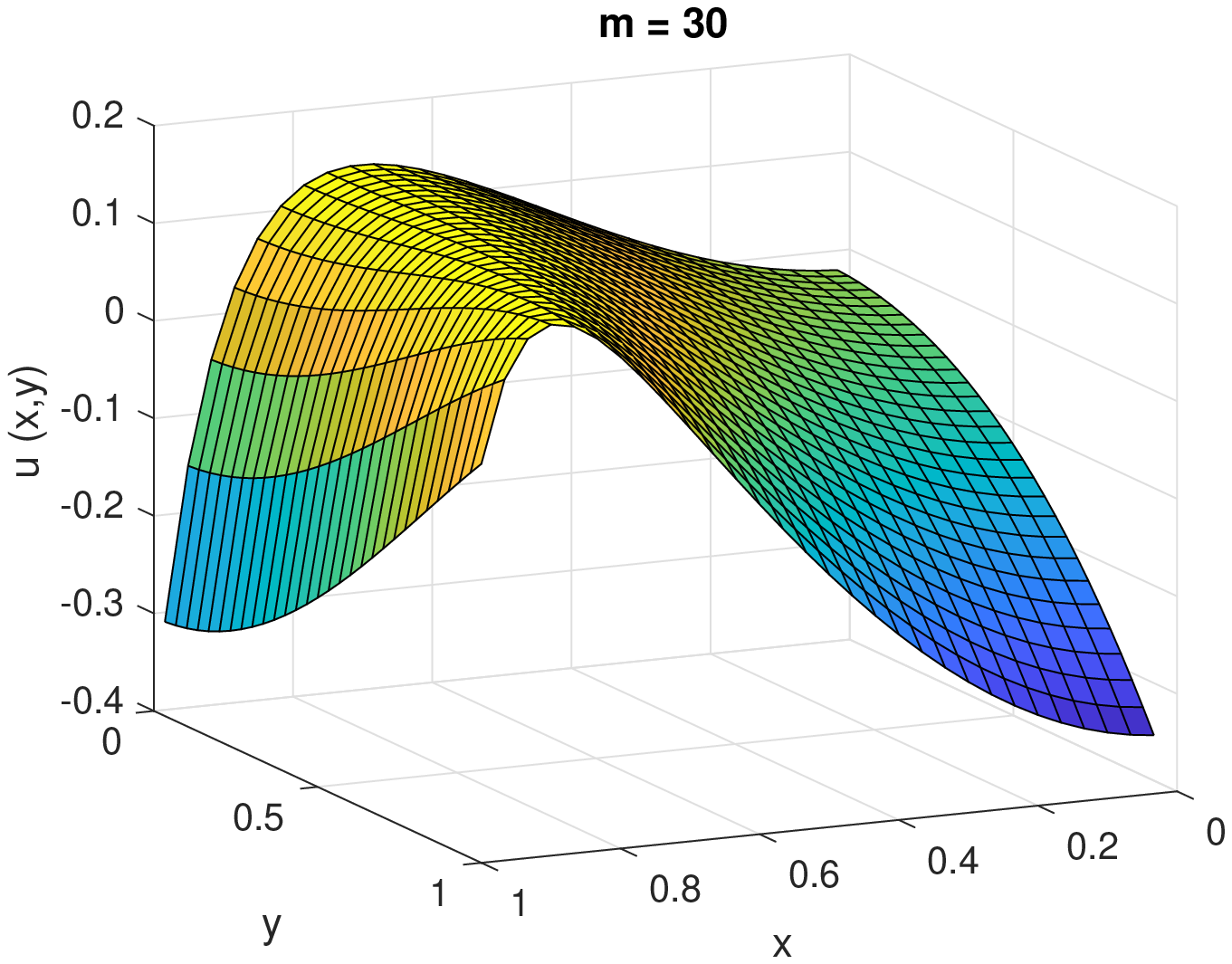}}&
\subfigure[] { \includegraphics[width=0.43\textwidth]{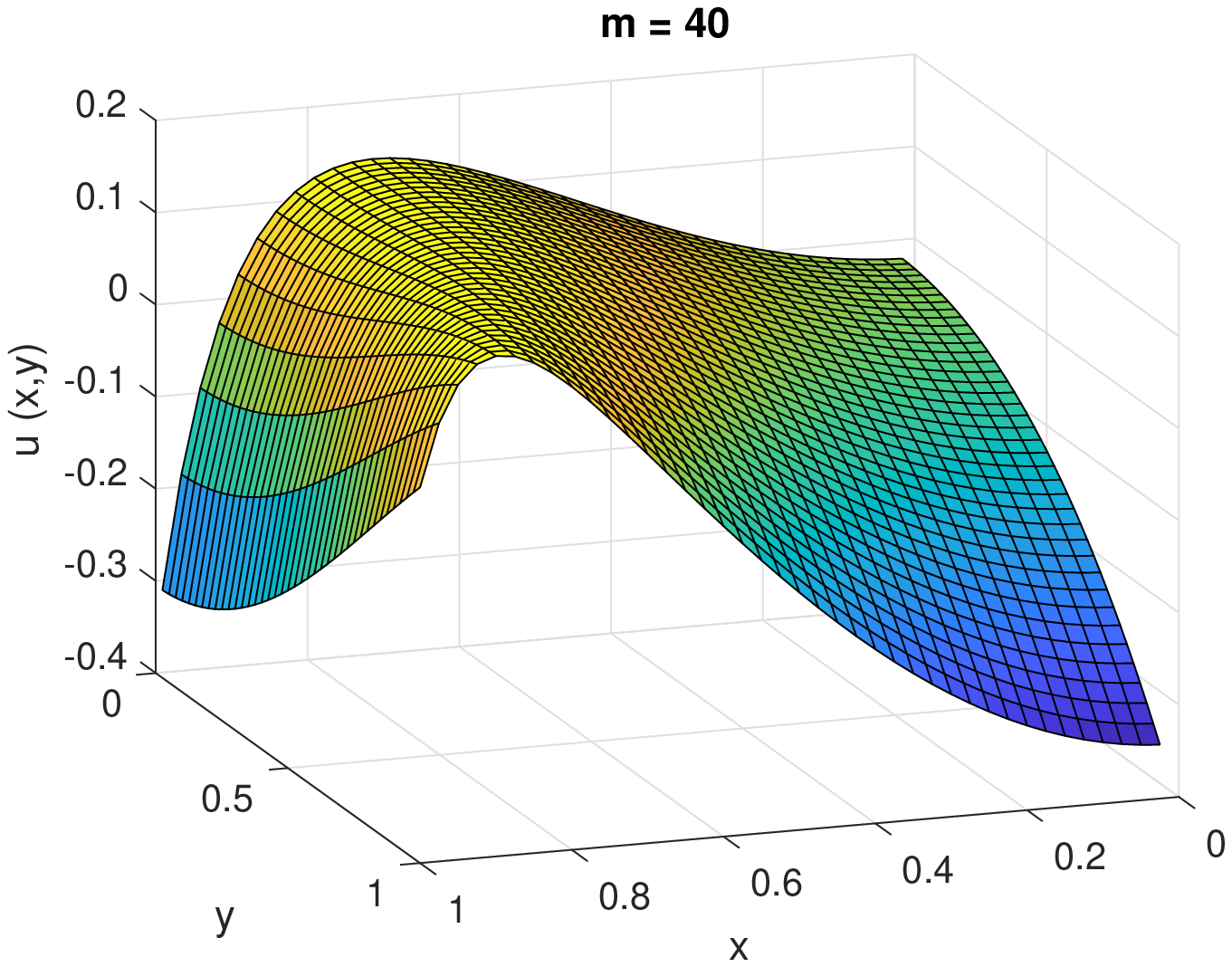}}
\end{tabular}
\caption{Solution of the multilinear system for different values of $m$.}
\label{finalbell}
\end{center}
\end{figure}

\subsection{Iterative method} 

Recently there has been increasing interest in developing
the tensor-based iterative method for solving multilinear systems \cite{bral,li17,li18, liang18,liu18}.  In case of nonsingular and positive definite tensor, few iterative schemes such as Jacobi and biconjugate gradient (BiCG) are discussed in \cite{bral} with help of Einstein product. 
A general form of tensor-based iterative method for the multilinear system \eqref{eq1.10} is defined as
\begin{eqnarray}\label{iteq}
   \mc{X}^{(k+1)} = \mc{H}\n \mc{X}^{(k)} + \mc{C}, ~~~ \textnormal{for} ~~ k=0,1,2, \cdots
\end{eqnarray}
where $\mc{X}^{(k+1)}$ and $\mc{X}^{(k)}$are the approximations for the tensor $\mc{X}$ at the $(k+1)$-th and $k$-th iteration, respectively. Here $\mc{H}$ is called the iteration tensor depending on $\mc{A}$ and $\mc{C}$. In case of limiting, when $k \rightarrow \infty,$ $\mc{X}^{(k)}$ converges to the exact solution 
\begin{eqnarray*}
   \mc{X} = \mc{A}^{-1}*_N \mc{B}.
\end{eqnarray*}
Now the iteration scheme \eqref{iteq} becomes,
$\mc{A}^{-1}*_N \mc{B} = \mc{H}*_N\mc{A}^{-1}*_N \mc{B} +\mc{C},$ which implies $\mc{C} = (\mc{I} - \mc{H})*_N \mc{A}^{-1}*_N\mc{B}$. Consider $\mc{A}=\mc{L}+\mc{D}+\mc{U},$ where $\mc{L}$ is the lower off-diagonal tensor, $\mc{D}$ is the diagonal tensor and $\mc{U}$ is the upper off-diagonal tensor. Then the Gauss-Seidel iteration method gives the iteration tensor $\mc{H} = - (\mc{D} + \mc{L})^{-1}*_N \mc{U}$  and $\mc{C} = (\mc{D} + \mc{L})^{-1} *_N \mc{B}$.   Thus our aim is to analyze the iteration tensor $\mc{H}$.

Now, we recall the Frobenius norm $|| \cdot||_F$  of a tensor $\mc{A}\in\mathbb{C}^{\textbf{I}(N)\times \textbf{J}(N)}$ which was introduced in \cite{sun}, as follows.
\begin{equation*}
\|\mc{A}\|_F=\left(\sum_{\textbf{i}(N),\textbf{j}(N)}\left|a_{\textbf{i}(N),\textbf{j}(N)}\right|^2\right)^{1/2}.
\end{equation*}
With reference to the Frobenius norm $|| \cdot||_F$, we define the maximum norm
\begin{eqnarray*}
||\cdot  ||_\infty = \max_{\textbf{j}(N)}\left( \displaystyle\sum_{\textbf{i}(N)}^{\mathbf{\hat{I}}(N)}\left|a_{\textbf{i}(N),\textbf{j}(N)}\right|\right).
\end{eqnarray*}
Using the above definition, we now prove the following  result on Frobenius norm.
\begin{lemma}\label{normin}
Let $\mc{A}\in\mathbb{C}^{\textbf{I}(N)\times\textbf{P}(N)}$  and  $\mc{B}\in\mathbb{C}^{\textbf{P}(N)\times\textbf{J}(N)}.$ Then
$\|\mc{A}\n\mc{B}\|_F\leq \|\mc{A}\|_F\cdot\|\mc{B}\|_F.$
\end{lemma}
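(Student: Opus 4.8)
The plan is to reduce this tensor inequality to the classical Cauchy--Schwarz inequality applied to the contraction index, exactly as one proves submultiplicativity of the Frobenius norm in the matrix case. First I would expand the squared left-hand side by combining the definition of the Frobenius norm with that of the Einstein product, obtaining
\[
\|\mc{A}\n\mc{B}\|_F^2 = \sum_{\textbf{i}(N),\textbf{j}(N)} \left| \sum_{\textbf{k}(N)} a_{\textbf{i}(N),\textbf{k}(N)}\, b_{\textbf{k}(N),\textbf{j}(N)} \right|^2.
\]

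Next, for each fixed pair of free multi-indices $(\textbf{i}(N),\textbf{j}(N))$, I would apply the Cauchy--Schwarz inequality to the inner sum over the contraction multi-index $\textbf{k}(N)$, treating $\left(a_{\textbf{i}(N),\textbf{k}(N)}\right)_{\textbf{k}(N)}$ and $\left(b_{\textbf{k}(N),\textbf{j}(N)}\right)_{\textbf{k}(N)}$ as two vectors indexed by $\textbf{k}(N)$. This yields the entrywise bound
\[
\left| \sum_{\textbf{k}(N)} a_{\textbf{i}(N),\textbf{k}(N)}\, b_{\textbf{k}(N),\textbf{j}(N)} \right|^2 \leq \left( \sum_{\textbf{k}(N)} \left|a_{\textbf{i}(N),\textbf{k}(N)}\right|^2 \right)\left( \sum_{\textbf{k}(N)} \left|b_{\textbf{k}(N),\textbf{j}(N)}\right|^2 \right).
\]

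Summing this bound over all $(\textbf{i}(N),\textbf{j}(N))$, the key observation is that the first factor depends only on $\textbf{i}(N)$ and the second only on $\textbf{j}(N)$, so the resulting double sum separates into a product of two independent summations. One factor is $\sum_{\textbf{i}(N),\textbf{k}(N)}\left|a_{\textbf{i}(N),\textbf{k}(N)}\right|^2 = \|\mc{A}\|_F^2$ and the other is $\sum_{\textbf{k}(N),\textbf{j}(N)}\left|b_{\textbf{k}(N),\textbf{j}(N)}\right|^2 = \|\mc{B}\|_F^2$. Taking square roots then gives $\|\mc{A}\n\mc{B}\|_F \leq \|\mc{A}\|_F\cdot\|\mc{B}\|_F$, as claimed.

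The argument is essentially routine, so I would not expect a genuine obstacle; the only point demanding care is the bookkeeping of the multi-indices, namely ensuring that the Cauchy--Schwarz summation variable $\textbf{k}(N)$ is precisely the contraction index of the Einstein product, and that after summing the free indices $\textbf{i}(N)$ and $\textbf{j}(N)$ decouple cleanly so that the double sum factorizes into the two Frobenius norms.
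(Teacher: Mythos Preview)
Your proposal is correct and follows essentially the same approach as the paper: expand the Frobenius norm of the product, apply Cauchy--Schwarz over the contraction multi-index $\textbf{k}(N)$, and then factorize the resulting sum into $\|\mc{A}\|_F^2\cdot\|\mc{B}\|_F^2$. The only cosmetic difference is that the paper works with the square root throughout and inserts an intermediate triangle-inequality step $|\sum\cdots|\leq\sum|\cdots|$ before Cauchy--Schwarz, whereas you work with squared norms and apply Cauchy--Schwarz directly to $|\sum\cdots|^2$; your route is marginally cleaner.
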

\begin{proof}
By applying Cauchy-Schwarz inequality to the inner summation, we have 
\begin{eqnarray*}
 \|\mc{A}\n\mc{B}\|_F&=&\left(\sum_{\textbf{i}(N),\textbf{j}(N)}\left|{\sum_{\textbf{k}(N)}a_{\textbf{i}(N),\textbf{k}(N)}b_{\textbf{k}(N),\textbf{j}(N)}}\right|^2\right)^{1/2}\\
 &\leq& \left(\sum_{\textbf{i}(N),\textbf{j}(N)}\left[\sum_{\textbf{k}(N)}|a_{\textbf{i}(N),\textbf{k}(N)}b_{\textbf{k}(N),\textbf{j}(N)}|\right]^2\right)^{1/2}\\
 &\leq &\left(\sum_{\textbf{i}(N),\textbf{j}(N)}\left(\sum_{\textbf{k}(N)}|a_{\textbf{i}(N),\textbf{k}(N)}|^2\sum_{\textbf{k}(N)}|b_{\textbf{k}(N),\textbf{j}(N)}|^2\right)\right)^{1/2}\\
 &=&\left(\sum_{\textbf{i}(N),\textbf{k}(N)}|a_{\textbf{i}(N),\textbf{k}(N)}|^2\right)^{1/2}\cdot \left(\sum_{\textbf{k}(N),\textbf{j}(N)}|b_{\textbf{k}(N),\textbf{j}(N)}|^2\right)^{1/2}\\
 &=&\|\mc{A}\|_F\cdot\|\mc{B}\|_F.
\end{eqnarray*}
\end{proof}
In case of  $\mc{B}=\mc{A},$ we obtain the following result as a corollary.
\begin{corollary}\label{cor5.14}
Let $\mc{A}\in\mathbb{C}^{\textbf{I}(N)\times\textbf{I}(N)}.$ Then $\|\mc{A}^k\|\leq \|\mc{A}\|^k$ for any positive integer $k$.
\end{corollary}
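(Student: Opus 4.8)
The plan is to prove the inequality by induction on $k$, using Lemma \ref{normin} as the single substantive ingredient. Indeed, the corollary is nothing more than the finite iteration of the submultiplicativity already established there, specialized to the case $\mc{B}=\mc{A}$, so no genuinely new idea is required. For the base case $k=1$ the statement reads $\|\mc{A}\|\leq\|\mc{A}\|$, which holds trivially with equality.

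For the inductive step I would assume $\|\mc{A}^{k}\|\leq\|\mc{A}\|^{k}$ for some positive integer $k$ and factor the next power as $\mc{A}^{k+1}=\mc{A}\n\mc{A}^{k}$. Applying Lemma \ref{normin} with the second factor taken to be $\mc{A}^{k}$ gives $\|\mc{A}^{k+1}\|=\|\mc{A}\n\mc{A}^{k}\|\leq\|\mc{A}\|\cdot\|\mc{A}^{k}\|$, and the induction hypothesis then yields $\|\mc{A}\|\cdot\|\mc{A}^{k}\|\leq\|\mc{A}\|\cdot\|\mc{A}\|^{k}=\|\mc{A}\|^{k+1}$. Chaining these two estimates completes the step, and hence the induction establishes $\|\mc{A}^{k}\|\leq\|\mc{A}\|^{k}$ for every positive integer $k$.

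There is essentially no obstacle here. Since both $\mc{A}$ and $\mc{A}^{k}$ lie in $\mathbb{C}^{\textbf{I}(N)\times\textbf{I}(N)}$, the hypothesis of Lemma \ref{normin} is automatically met with the inner index set $\textbf{P}(N)=\textbf{I}(N)$, and the Einstein product $\mc{A}\n\mc{A}^{k}$ is well-defined. The only point worth stating explicitly is that $\|\cdot\|$ in the corollary denotes the same Frobenius norm $\|\cdot\|_F$ appearing in the lemma, so that the two bounds may be composed without any change of norm.
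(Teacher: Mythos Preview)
Your proof is correct and matches the paper's intent exactly: the paper presents this corollary immediately after Lemma~\ref{normin} with only the remark ``In case of $\mc{B}=\mc{A}$, we obtain the following result as a corollary,'' and your induction simply makes explicit the iteration that this remark leaves implicit. There is nothing to add.
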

\begin{theorem}\label{spectr}
Let $\mc{A}\in\mathbb{C}^{\textbf{I}(N)\times\textbf{I}(N)}.$  Then 
\begin{enumerate}
    \item[(a)] $\displaystyle\lim_{k\rightarrow\infty}\mc{A}^k=\mc{O}$ if $\|\mc{A}\|<1$ or if and only $\rho(\mc{A})<1$.
    \item[(b)] the series $\displaystyle\sum_{k=0}^\infty\mc{A}^k$ is convergent if and only if $\displaystyle\lim_{k\rightarrow\infty}\mc{A}^k=\mc{O}$. Further, the series converges to $(\mc{I}-\mc{A})^{-1}$.
\end{enumerate}
\end{theorem}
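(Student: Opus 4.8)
The plan is to dispose of the two easy implications directly with the tools already assembled and to reduce the single genuinely hard implication to the classical matrix statement via the reshape isomorphism of \cite{stan}.

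For part (a), I would first handle the sufficient condition $\|\mc{A}\|<1$: by Corollary \ref{cor5.14} we have $\|\mc{A}^k\|\leq\|\mc{A}\|^k\to 0$, and since every entry of a tensor is bounded in modulus by its Frobenius norm, this forces $\mc{A}^k\to\mc{O}$ entrywise. Next, for the necessity $\mc{A}^k\to\mc{O}\Rightarrow\rho(\mc{A})<1$, I would take any eigenvalue $\lambda$ with eigentensor $\mc{X}\neq\mc{O}$, so that $\mc{A}\n\mc{X}=\lambda\mc{X}$. By Lemma \ref{eigenpower} this gives $\mc{A}^k\n\mc{X}=\lambda^k\mc{X}$; letting $k\to\infty$ the left-hand side tends to $\mc{O}$, while the right-hand side is $\lambda^k\mc{X}$, and since $\mc{X}$ has a nonzero entry this forces $\lambda^k\to 0$, i.e. $|\lambda|<1$. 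As $\lambda$ was arbitrary, $\rho(\mc{A})<1$.

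The main obstacle is the remaining implication $\rho(\mc{A})<1\Rightarrow\mc{A}^k\to\mc{O}$, which cannot be obtained from submultiplicativity alone because the Frobenius norm may exceed $1$ even when $\rho(\mc{A})<1$. Here I would invoke the reshape map $rsh$: since it carries the Einstein product to ordinary matrix multiplication, it is a ring isomorphism with $rsh(\mc{A}^k)=rsh(\mc{A})^k$, and the eigenrelation $\mc{A}\n\mc{X}=\lambda\mc{X}$ reshapes to $rsh(\mc{A})\,\mathrm{vec}(\mc{X})=\lambda\,\mathrm{vec}(\mc{X})$, so the spectrum and hence $\rho$ are preserved. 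The classical matrix fact that $\rho(M)<1$ implies $M^k\to 0$ then applies to $M=rsh(\mc{A})$, and reshaping back yields $\mc{A}^k\to\mc{O}$ entrywise. This is precisely the step where the matrix theory is borrowed rather than re-derived.

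For part (b), the necessity is immediate: convergence of $\sum_{k=0}^\infty\mc{A}^k$ in the sense of Definition \ref{tenseries} means convergence of each scalar series $\sum_{k}(\mc{A}^k)_{\textbf{i}(N),\textbf{j}(N)}$, whose general term must vanish, so $\mc{A}^k\to\mc{O}$. For the sufficiency, once $\mc{A}^k\to\mc{O}$ I would apply part (a) to get $\rho(\mc{A})<1$, so that $1$ is not an eigenvalue of $\mc{A}$ and $\mc{I}-\mc{A}$ is invertible. The telescoping identity $(\mc{I}-\mc{A})\n\sum_{k=0}^{n}\mc{A}^k=\mc{I}-\mc{A}^{n+1}$ then gives $\sum_{k=0}^{n}\mc{A}^k=(\mc{I}-\mc{A})^{-1}\n(\mc{I}-\mc{A}^{n+1})$, and letting $n\to\infty$ with $\mc{A}^{n+1}\to\mc{O}$ delivers both the convergence of the series and its value $(\mc{I}-\mc{A})^{-1}$.
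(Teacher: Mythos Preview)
Your proof is correct. For the norm implication in (a) and for all of (b) you proceed essentially as the paper does: Corollary~\ref{cor5.14} gives $\|\mc{A}^k\|\le\|\mc{A}\|^k\to 0$, and for (b) the telescoping identity together with the invertibility of $\mc{I}-\mc{A}$ (once $\rho(\mc{A})<1$) yields both convergence and the limit $(\mc{I}-\mc{A})^{-1}$.

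Where you diverge is in the spectral-radius equivalence of (a). The paper argues through the tensor SVD (Lemma~\ref{SVDTensor}), writing $\mc{A}^k=\mc{U}\n\mc{D}\n\mc{V}^*$ with unitary $\mc{U},\mc{V}$ and asserting that the diagonal entries of $\mc{D}$ are the eigenvalues of $\mc{A}^k$, from which it reads off the equivalence in one stroke. You instead split the two directions: for $\mc{A}^k\to\mc{O}\Rightarrow\rho(\mc{A})<1$ you run a direct eigentensor argument via Lemma~\ref{eigenpower}, and for $\rho(\mc{A})<1\Rightarrow\mc{A}^k\to\mc{O}$ you pass through the reshape isomorphism of \cite{stan} to the classical matrix statement. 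Your route is cleaner on this point: the reshape map transparently transports the Einstein product and the spectrum to matrices, where the result is standard, and your eigentensor argument for the converse is elementary and airtight. The paper's approach stays at the tensor level, but its SVD step identifies the diagonal of $\mc{D}$ with the eigenvalues of $\mc{A}^k$, which is not what an SVD delivers in general, and the $k$-dependence of the factors is not tracked; your version avoids these issues at the cost of importing the matrix fact rather than re-deriving it intrinsically.
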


\begin{proof}
Let $\|\mc{A}\|<1.$ Using Corollary \ref{cor5.14}, we get $\|\displaystyle\lim_{k\rightarrow\infty}\mc{A}^k\|\leq\displaystyle\lim_{k\rightarrow\infty}\|\mc{A}\|^k=0.$ Therefore, $\displaystyle\lim_{k\rightarrow\infty}\mc{A}^k=\mc{O}.$  To show the second part of $(a),$ let $\rho(\mc{A})<1.$ In the view of the Lemma \ref{SVDTensor},  the singular value decomposition of the tensor $\mc{A}^k$ can be written as $\mc{A}^k=\mc{U}\n\mc{D}\n\mc{V}^*,$ where $\mc{U},~\mc{V}\in \mathbb{C}^{\textbf{I}(N)\times\textbf{I}(N)}$ are unitary tensors. The diagonal entries of the diagonal tensor  $\mc{D}\in\mathbb{C}^{\textbf{I}(N)\times\textbf{I}(N)}$ are the eigenvalues of $\mc{A}^k.$ Hence $\displaystyle\lim_{k\rightarrow\infty}\mc{A}^k=\mc{O}$ if and only $|d_{\textbf{ i}(N),\textbf{i}(N)}|<1.$ Thus completes $(a).$ To claim  part $(b)$, it is enough to show the necessary part since only if part is trivial from the Definition \ref{tenseries}.  Let $\displaystyle\lim_{k\rightarrow\infty}\mc{A}^k=\mc{O}.$ So by Theorem \ref{spectr} $(a),$ $\rho(\mc{A})<1.$ Thus all the eigenvalues of $(\mc{I}-\mc{A})$ are nonzero. This leads the tensor $(\mc{I}-\mc{A})$ is nonsingular. Now 
\begin{equation}\label{eq1212}
    (\mc{I}+\mc{A}+\mc{A}^2+\cdots+\mc{A}^k)\n(\mc{I}-\mc{A})=\mc{I}-\mc{A}^{k+1}.
\end{equation}
Post-multiplying Eq. \eqref{eq1212} by $
(\mc{I}-\mc{A})^{-1}$, we get 
$\mc{I}+\mc{A}+\mc{A}^2+\cdots+\mc{A}^k=(\mc{I}-\mc{A}^{k+1})\n(\mc{I}-\mc{A})^{-1}.$ By taking $k\rightarrow\infty,$ we obtain $\displaystyle\sum_{k=0}^\infty\mc{A}^k=(\mc{I}-\mc{A})^{-1}.$ \end{proof}

\begin{theorem}\label{conv}
The iterative scheme \eqref{iteq} obtained from the tensor  splitting, converges to $\mc{A}^{-1}\n\mc{B}$ for any initial guess $\mc{X}^{(0)}$ if and only if $\rho(\mc{H})< 1$. 
\end{theorem}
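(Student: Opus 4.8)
The plan is to track the propagation of the iteration error and reduce convergence to the statement that the powers of the iteration tensor tend to the zero tensor, after which Theorem \ref{spectr} (a) supplies the spectral criterion. Write $\mc{X}^{*} = \mc{A}^{-1}\n\mc{B}$ for the exact solution. Because the scheme \eqref{iteq} is constructed so that $\mc{C} = (\mc{I}-\mc{H})\n\mc{A}^{-1}\n\mc{B}$, the exact solution is a fixed point of the iteration: $\mc{H}\n\mc{X}^{*} + \mc{C} = \mc{H}\n\mc{A}^{-1}\n\mc{B} + (\mc{I}-\mc{H})\n\mc{A}^{-1}\n\mc{B} = \mc{A}^{-1}\n\mc{B} = \mc{X}^{*}$. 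Subtracting this identity from \eqref{iteq} and setting $\mc{E}^{(k)} = \mc{X}^{(k)} - \mc{X}^{*}$ gives the error recurrence $\mc{E}^{(k+1)} = \mc{H}\n\mc{E}^{(k)}$, and hence by induction $\mc{E}^{(k)} = \mc{H}^{k}\n\mc{E}^{(0)}$. Thus $\mc{X}^{(k)}\to\mc{X}^{*}$ for every starting guess $\mc{X}^{(0)}$ if and only if $\mc{H}^{k}\n\mc{E}^{(0)}\to\mc{O}$ for every $\mc{E}^{(0)}\in\mathbb{C}^{\textbf{I}(N)}$.

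For the sufficiency direction, suppose $\rho(\mc{H})<1$. Theorem \ref{spectr} (a) yields $\displaystyle\lim_{k\rightarrow\infty}\mc{H}^{k}=\mc{O}$, and then Lemma \ref{normin} gives $\|\mc{E}^{(k)}\|_{F} = \|\mc{H}^{k}\n\mc{E}^{(0)}\|_{F} \leq \|\mc{H}^{k}\|_{F}\cdot\|\mc{E}^{(0)}\|_{F}\rightarrow 0$, so the scheme converges to $\mc{X}^{*}$ independently of $\mc{X}^{(0)}$.

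For the necessity direction, assume convergence for every initial guess, that is, $\mc{H}^{k}\n\mc{E}^{(0)}\to\mc{O}$ for all $\mc{E}^{(0)}$. I would specialize $\mc{E}^{(0)}$ to the unit tensors $\mc{E}_{\textbf{s}(N)}$ having a single entry equal to $1$ at the position $\textbf{s}(N)$ and zero elsewhere. By the definition of the Einstein product, $\left(\mc{H}^{k}\n\mc{E}_{\textbf{s}(N)}\right)_{\textbf{i}(N)} = (\mc{H}^{k})_{\textbf{i}(N),\textbf{s}(N)}$, so this product extracts the $\textbf{s}(N)$-th ``column'' of $\mc{H}^{k}$, which tends to $\mc{O}$ by hypothesis. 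Letting $\textbf{s}(N)$ range over all indices shows every entry of $\mc{H}^{k}$ converges to zero, i.e. $\displaystyle\lim_{k\rightarrow\infty}\mc{H}^{k}=\mc{O}$. Theorem \ref{spectr} (a) then forces $\rho(\mc{H})<1$, completing the equivalence.

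The fixed-point identity and the error induction are routine bookkeeping; the main obstacle is the necessity direction, where convergence is hypothesized only for the action $\mc{H}^{k}\n\mc{E}^{(0)}$ on tensors of $\mathbb{C}^{\textbf{I}(N)}$, and one must upgrade this to entrywise convergence of the full operator tensor $\mc{H}^{k}$ before Theorem \ref{spectr} (a) is applicable. The unit-tensor probing argument is precisely what bridges this gap.
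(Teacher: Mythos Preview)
Your proof is correct but follows a genuinely different route from the paper. The paper does not track the error $\mc{E}^{(k)}$; instead it takes (without loss of generality) $\mc{X}^{(0)}=\mc{O}$, unrolls the recursion to the explicit partial-sum formula $\mc{X}^{(k+1)}=(\mc{I}+\mc{H}+\cdots+\mc{H}^{k})\n\mc{C}$, and then appeals to Theorem~\ref{spectr}\,(b) (the geometric-series criterion) to identify the limit with $(\mc{I}-\mc{H})^{-1}\n\mc{C}=\mc{A}^{-1}\n\mc{B}$ exactly when $\rho(\mc{H})<1$. You instead run the classical error recurrence $\mc{E}^{(k)}=\mc{H}^{k}\n\mc{E}^{(0)}$ and invoke Theorem~\ref{spectr}\,(a) directly in both directions, using a unit-tensor probing argument to upgrade pointwise convergence $\mc{H}^{k}\n\mc{E}^{(0)}\to\mc{O}$ to entrywise convergence $\mc{H}^{k}\to\mc{O}$. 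Your approach handles the quantifier ``for any initial guess $\mc{X}^{(0)}$'' more transparently: in the paper's argument the reduction to $\mc{X}^{(0)}=\mc{O}$ and the passage from convergence of $(\sum\mc{H}^{j})\n\mc{C}$ to convergence of $\sum\mc{H}^{j}$ are left implicit, whereas your unit-tensor step makes the necessity direction fully explicit. The paper's route, on the other hand, yields the limit value $(\mc{I}-\mc{H})^{-1}\n\mc{C}$ from the series identity without first verifying the fixed-point relation.
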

\begin{proof}
Without loss of generality, assume $\mc{X}^{(0)}=\mc{O}.$ Then by Eq. \eqref{iteq}, we obtain $\mc{X}^{(1)}=\mc{C}$. This leads $\mc{X}^{(2)}=\mc{H}\n\mc{X}^{(1)}+\mc{C}=\mc{H}\n\mc{C}+\mc{C}=(\mc{H}+\mc{I})\n\mc{C}.$ By succeeding $(k+1)$-times, we get 
\begin{equation*}
    \mc{X}^{(k+1)}=(\mc{I}+\mc{H}+\mc{H}^2+\cdots+\mc{H}^k)\n\mc{C}.
\end{equation*}
By taking $k\rightarrow \infty$ and applying Theorem \ref{spectr} $(b),$ we obtain $\displaystyle\lim_{k\rightarrow\infty}\mc{X}^{(k+1)}=(\mc{I}-\mc{H})^{-1}\n\mc{C}$ if and only if $\rho(\mc{H})<1.$ This is equivalently,  $\displaystyle\lim_{k\rightarrow\infty}\mc{X}^{(k+1)}=\mc{A}^{-1}\n\mc{B}$ if and only if $\rho(\mc{H})<1.$ 
\end{proof}
In view of Theorem \ref{spectr} $(a)$ and \ref{conv}, we state the following result as a corollary.
\begin{corollary}
If $\|\mc{H}\|<1,$ then the iterative scheme \eqref{iteq} converges to $\mc{A}^{-1}\n\mc{B}$ for any initial guess $\mc{X}^{(0)}.$
\end{corollary}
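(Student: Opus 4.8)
The plan is to obtain the conclusion by directly chaining the two immediately preceding results, so the argument is essentially a one-line deduction that I would expand only to make the logical flow explicit. The hypothesis is the norm bound $\|\mc{H}\|<1$, and the target is the convergence statement of the scheme \eqref{iteq}; the bridge between them is the spectral-radius condition $\rho(\mc{H})<1$, which is exactly the quantity controlled by both Theorem \ref{spectr} and Theorem \ref{conv}.

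First I would assume $\|\mc{H}\|<1$ and apply the first implication of Theorem \ref{spectr}(a), which asserts that $\|\mc{H}\|<1$ forces $\displaystyle\lim_{k\rightarrow\infty}\mc{H}^k=\mc{O}$. Next I would invoke the equivalence contained in the same Theorem \ref{spectr}(a), namely that $\displaystyle\lim_{k\rightarrow\infty}\mc{H}^k=\mc{O}$ holds if and only if $\rho(\mc{H})<1$; reading this equivalence in the reverse direction converts the limit statement just obtained into the spectral bound $\rho(\mc{H})<1$. Finally, Theorem \ref{conv} states that the iterative scheme \eqref{iteq} converges to $\mc{A}^{-1}\n\mc{B}$ for every initial guess $\mc{X}^{(0)}$ precisely when $\rho(\mc{H})<1$, so the desired convergence follows at once.

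I do not expect a genuine obstacle here, since the corollary is a packaging of earlier theorems rather than a new argument; the only point requiring care is to use Theorem \ref{spectr}(a) twice, once for the sufficiency of the norm bound for $\mc{H}^k\to\mc{O}$ and once for the equivalence with $\rho(\mc{H})<1$, so that the norm hypothesis is correctly promoted to the spectral hypothesis that Theorem \ref{conv} actually requires. A brief remark that $\|\mc{H}\|<1\Rightarrow\rho(\mc{H})<1$ (but not conversely) would make clear why this is stated as a corollary rather than an equivalence.
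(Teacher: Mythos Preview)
Your proposal is correct and matches the paper's approach exactly: the paper does not even write out a proof but simply introduces the corollary with ``In view of Theorem \ref{spectr}$(a)$ and \ref{conv}, we state the following result as a corollary,'' which is precisely the chain of implications you spell out.
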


\begin{theorem}
If the tensor $\mc{A} \in \mathbb{C}^{\textbf{I}(N) \times \textbf{I}(N)}$ is a strictly diagonally dominant, then the Gauss-Seidel iteration scheme converges for any initial tensor $\mc{X}^{(0)}.$  
\begin{proof}
The Gauss-Seidel iteration scheme is given by 
\begin{eqnarray*}
   \mc{X}^{(k+1)} &=& - (\mc{D} + \mc{L})^{-1}*_N \mc{U}*_N\mc{X}^{(k)} +  (\mc{D} + \mc{L})^{-1} *_N \mc{B}\\
   && = - (\mc{D} + \mc{L})^{-1} *_N\left[ \mc{A} - (\mc{D} + \mc{L})\right]*_N\mc{X}^{(k)} + (\mc{D} + \mc{L})^{-1} *_N \mc{B}\\
   && = [\mc{I} - (\mc{D} + \mc{L})^{-1} \n \mc{A}]\n \mc{X}^{(k)} +  (\mc{D} + \mc{L})^{-1} *_N \mc{B}.
\end{eqnarray*}
The iteration scheme will convergent if $\rho([\mc{I} - (\mc{D} + \mc{L})^{-1} \n \mc{A}]) <  1$.
Let $\lambda$ be the eigenvalue of $\mc{I} - (\mc{D} + \mc{L})^{-1} \n \mc{A}$. Then $(\mc{I} - (\mc{D} + \mc{L})^{-1} \n \mc{A})\n\mc{X} = \lambda\mc{X}$, which implies 
\begin{eqnarray}\label{tseidel}
(\mc{D} + \mc{L})\n \mc{X} - \mc{A}\n \mc{X} = \lambda(\mc{D}+\mc{L})*_N\mc{X}.
\end{eqnarray}
This can be written in term of components,
\begin{equation}\label{myth}
    -\displaystyle\sum_{\textbf{j}(N) = \textbf{i}(N)+1}^{\mathbf{\hat{I}}(N)} a_{\textbf{i}(N),\textbf{j}(N)} x_{\textbf{j}(N)} = \lambda   \displaystyle\sum_{\textbf{j}(N) = 1}^{\mathbf{i}(N)} a_{\textbf{i}(N),\textbf{j}(N)} x_{\textbf{j}(N)},
\end{equation}
where $\textbf{j}(N)$ from $1$ to $\textbf{i}(N)$, indicates  $\{ j_1 =1, j_2=1, \cdots, j_N=1 \}$ to $ \{ j_1 =i_1, j_2=i_2, \cdots, j_N=i_N \}$. Similarly,  one can represent  $\textbf{j}(N)$, from $(\textbf{i}(N)+1)$ to $\mathbf{\hat{I}}(N)$.
Now Eq. \eqref{myth} can be written as 
\begin{equation*}
    \lambda a_{\textbf{i}(N), \textbf{i}(N)} x_{\textbf{i}(N)} = - \displaystyle\sum_{\textbf{j}(N) = \textbf{i}(N)+1}^{\mathbf{\hat{I}}(N)} a_{\textbf{i}(N),\textbf{j}(N)} x_{\textbf{j}(N)} - \lambda   \displaystyle\sum_{\textbf{j}(N)=1}^{\mathbf{{i}}(N)-1} a_{\textbf{i}(N),\textbf{j}(N)} x_{\textbf{j}(N)},
\end{equation*}
which equivalent to, 
\begin{equation}\label{eq5.17}
    \left|\lambda a_{\textbf{i}(N), \textbf{i}(N)} x_{\textbf{i}(N)} \right| \leq   \displaystyle\sum_{\textbf{j}(N) = \textbf{i}(N)+1}^{\mathbf{\hat{I}}(N)} \left|a_{\textbf{i}(N),\textbf{j}(N)}\right| \left|x_{\textbf{j}(N)}\right| + \left|\lambda\right|   \displaystyle\sum_{\textbf{j}(N) = 1}^{\textbf{i}(N)-1} \left|a_{\textbf{i}(N),\textbf{j}(N)}\right| \left| x_{\textbf{j}(N)}\right|.
\end{equation}
Without loss of generality, one can assume that $||\mc{X}||_\infty =1$. Choose indices $\textbf{i}(N)$ such that $|x_{\textbf{i}(N)}| = 1$ and $x_{\textbf{j}(N)} \leq 1$ for all $\textbf{i}(N) \neq \textbf{k}(N)$. We obtain from the above Eq. \eqref{eq5.17},
\begin{equation*}
    \left|\lambda \right| \left|a_{\textbf{i}(N), \textbf{i}(N)}\right| \leq   \displaystyle\sum_{\textbf{j}(N) = \textbf{i}(N)+1}^{\mathbf{\hat{I}}(N)} \left|a_{\textbf{i}(N),\textbf{j}(N)}\right|  + \left|\lambda\right|   \displaystyle\sum_{\textbf{j}(N) = 1}^{\textbf{i}(N)-1} \left|a_{\textbf{i}(N),\textbf{j}(N)}\right|, 
\end{equation*}
or 
\begin{equation*}
    \left|\lambda \right| \left[ \left|a_{\textbf{i}(N), \textbf{i}(N)}\right| - \displaystyle\sum_{\textbf{j}(N) = 1}^{\textbf{i}(N)-1} \left|a_{\textbf{i}(N),\textbf{j}(N)}\right| \right] \leq   \displaystyle\sum_{\textbf{j}(N) = \textbf{i}(N)+1}^{\mathbf{\hat{I}}(N)} \left|a_{\textbf{i}(N),\textbf{j}(N)}\right|. 
\end{equation*}
Since the tensor $\mc{A}$ is strictly diagonally dominant, the above inequality  becomes
\begin{equation*}
    \left|\lambda \right|  \leq   \frac{\displaystyle\sum_{\textbf{j}(N) = \textbf{i}(N)+1}^{\mathbf{\hat{I}}(N)} \left|a_{\textbf{i}(N),\textbf{j}(N)}\right|}{\left[ \left|a_{\textbf{i}(N), \textbf{i}(N)}\right| - \displaystyle\sum_{\textbf{j}(N) = 1}^{\textbf{i}(N)-1} \left|a_{\textbf{i}(N),\textbf{j}(N)}\right| \right]} < 1 .
\end{equation*}
 
\end{proof}
\end{theorem}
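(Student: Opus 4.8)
The plan is to translate the convergence problem into a spectral-radius estimate and then mimic the classical scalar Gauss--Seidel argument componentwise. By Theorem \ref{conv} the Gauss--Seidel scheme converges for every initial tensor $\mc{X}^{(0)}$ if and only if $\rho(\mc{H})<1$, where, for the splitting $\mc{A}=\mc{L}+\mc{D}+\mc{U}$, the iteration tensor is $\mc{H}=\mc{I}-(\mc{D}+\mc{L})^{-1}\n\mc{A}$. First I would observe that strict diagonal dominance forces $|a_{\textbf{i}(N),\textbf{i}(N)}|>0$ for every $\textbf{i}(N)$; this makes the diagonal tensor invertible and, together with the lower off-diagonal structure of $\mc{L}$, guarantees that $(\mc{D}+\mc{L})^{-1}$ exists, so $\mc{H}$ is well defined. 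Thus the whole theorem reduces to showing $|\lambda|<1$ for every eigenvalue $\lambda$ of $\mc{H}$.

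To carry this out, let $\mc{X}$ be an eigenvector of $\mc{H}$ for the eigenvalue $\lambda$, so that $(\mc{I}-(\mc{D}+\mc{L})^{-1}\n\mc{A})\n\mc{X}=\lambda\mc{X}$. Multiplying on the left by $(\mc{D}+\mc{L})$ and using $(\mc{D}+\mc{L})-\mc{A}=-\mc{U}$, I obtain
\begin{equation*}
(\mc{D}+\mc{L})\n\mc{X}-\mc{A}\n\mc{X}=\lambda(\mc{D}+\mc{L})\n\mc{X}.
\end{equation*}
Writing this out in components with the Einstein product and separating the diagonal entry $a_{\textbf{i}(N),\textbf{i}(N)}x_{\textbf{i}(N)}$ from the strictly-lower and strictly-upper sums, I isolate $\lambda\, a_{\textbf{i}(N),\textbf{i}(N)}x_{\textbf{i}(N)}$ as a combination of the off-diagonal terms.

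The decisive step is a normalization. I would scale $\mc{X}$ so that $\|\mc{X}\|_\infty=1$ and fix an index $\textbf{i}(N)$ attaining $|x_{\textbf{i}(N)}|=1$, whence $|x_{\textbf{j}(N)}|\le 1$ for all $\textbf{j}(N)$. Taking moduli, applying the triangle inequality, and bounding each $|x_{\textbf{j}(N)}|$ by $1$ gives
\begin{equation*}
|\lambda|\left[|a_{\textbf{i}(N),\textbf{i}(N)}|-\sum_{\textbf{j}(N)<\textbf{i}(N)}|a_{\textbf{i}(N),\textbf{j}(N)}|\right]\le\sum_{\textbf{j}(N)>\textbf{i}(N)}|a_{\textbf{i}(N),\textbf{j}(N)}|.
\end{equation*}
Strict diagonal dominance makes the bracketed quantity strictly positive and strictly larger than the right-hand sum, so dividing yields $|\lambda|<1$. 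As $\lambda$ was an arbitrary eigenvalue, $\rho(\mc{H})<1$, and Theorem \ref{conv} finishes the proof.

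The main obstacle is not conceptual but notational: one must make the passage from the tensor identity to its scalar form rigorous, since the ``lower'' and ``upper'' off-diagonal parts are governed by the componentwise conditions $j_k<i_k$ and $j_k>i_k$ on the multi-indices, and the split ``$\textbf{j}(N)$ from $1$ to $\textbf{i}(N)-1$'' versus ``from $\textbf{i}(N)+1$ to $\mathbf{\hat{I}}(N)$'' must be interpreted consistently with the definitions of $\mc{L}$ and $\mc{U}$. Once this ordering convention is pinned down, the estimate is a faithful transcription of the matrix Gauss--Seidel proof into the Einstein-product framework.
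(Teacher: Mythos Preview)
Your proposal is correct and follows essentially the same route as the paper: reduce convergence to $\rho(\mc{H})<1$ via Theorem~\ref{conv}, pass from the eigenvalue equation to $(\mc{D}+\mc{L})\n\mc{X}-\mc{A}\n\mc{X}=\lambda(\mc{D}+\mc{L})\n\mc{X}$, write it componentwise, normalize so that $\|\mc{X}\|_\infty=1$, and use strict diagonal dominance to bound $|\lambda|<1$. Your added remark that strict diagonal dominance forces invertibility of $\mc{D}+\mc{L}$, and your caveat about interpreting the multi-index ordering consistently with the definitions of $\mc{L}$ and $\mc{U}$, are both worthwhile points that the paper leaves implicit.
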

The idea behind iterative methods is to save memory and operational costs for solving multilinear systems. In light of this, the higher order Gauss-Seidel method is described in Algorithm 1. The application of this algorithm is illustrated in the following Poisson problem. 
\begin{small}
\begin{algorithm}[H]
\caption{Higher order Gauss-Seidel Method}\label{alg2}
\begin{algorithmic}[1]
\Procedure{Gauss-Seidel}{$\mc{A},\mc{B},\epsilon, \mbox{MAX}$}
\State Given $\mc{A}\in\mathbb{R}^{\textbf{I}(N) \times \textbf{I}(N)},~  \mc{B}\in\mathbb{R}^{\textbf{I}(N)},\mbox{ and } \mbox{MAX}$
\State Initial guess $\mc{X}^{(0)}\in\mathbb{R}^{\textbf{I}(N)}$
\NoDo
\For{$k=1$ to MAX}

\For{ $\textbf{i}(N)=1$ to $\hat{\textbf{I}}(N)$}
    \For{ $\textbf{j}(N)=1$ to $\hat{\textbf{J}}(N)$}
              \State \small{\hspace*{-1.5cm}
              {$(\mc{X}^{(k)})_{\textbf{i}(N)}=\displaystyle\frac{1}{{a}_{\textbf{i}(N), \textbf{i}(N)}}\left({b}_{\textbf{i}(N)}-\displaystyle\sum_{\textbf{j}(N)=1}^{\textbf{i}(N)-1}{a}_{\textbf{i}(N), \textbf{j}(N)}(\mc{X}^{(k)})_{\textbf{j}(N)}-\displaystyle\sum_{\textbf{j}(N)=\textbf{i}(N)+1}^{\hat{\textbf{J}}(N) }{a}_{\textbf{i}(N), \textbf{j}(N)}(\mc{X}^{(k-1)})_{\textbf{j}(N)}\right)$}}
    \EndFor
    \EndFor
    \If {$(\|\mc{X}^{(k)}-\mc{X}^{(0)}\|<\epsilon)$}
\Break
\EndIf
\State $\mc{X}^{(0)}\leftarrow \mc{X}^{(k)}$
\EndFor
\State \Return $\mc{X}^{(k)}$
\EndProcedure
\end{algorithmic}
\end{algorithm}
\end{small}
\begin{example}\label{example-2}
Consider the two-dimension Poisson problem
\begin{eqnarray*}
-\nabla^2 u&=&f(x,y)    \textnormal{~~in~~} \Omega \\
u&=& 0  \textnormal{~~on~~} \partial \Omega ,
\end{eqnarray*}
where $\Omega = \{ (x,y) : 0 < x, y < 1\}$ with boundary $\partial \Omega$, $f(x,y)$ is a given function, and 
\begin{equation*}
   \nabla^2u = \frac{\partial^2 u}{\partial x^2}+\frac{\partial^2 u}{\partial y^2}.
\end{equation*}
Here, our aim is to compute an approximation of the unknown function $u(x,y)$.
Using $5$-point stencil central difference scheme on a discretizing the unit square domain 
with $n$ interior nodes, we obtain a multilinear system
\begin{eqnarray*}
\mc{A}*_2\mc{X}=\mc{B}, \textnormal{~~ where~} \mc{A}\in\mathbb{R}^{n\times n\times n\times n}, ~\mc{X}\in\mathbb{R}^{n\times n},\textnormal{~and~} \mc{B}\in\mathbb{R}^{n\times n}.
\end{eqnarray*}
The tensor $\mc{A}$ is of the form 
\begin{eqnarray}\label{2nd}
\mc{A}=\mc{I}_n\kronecker \mc{P}_n+\mc{P}_n\kronecker \mc{I}_n,
\end{eqnarray}
where $\mc{I}_n$ is the second order identity tensor and $\mc{P}_n$ is  also a second order tensor  of the form 
$P_n=tridiagonal(-1, 2,-1).$
%
\begin{figure}[t!]\label{figure-2}
\begin{center}
\begin{tabular}{cc}
\subfigure[] { \includegraphics[width=0.4\textwidth]{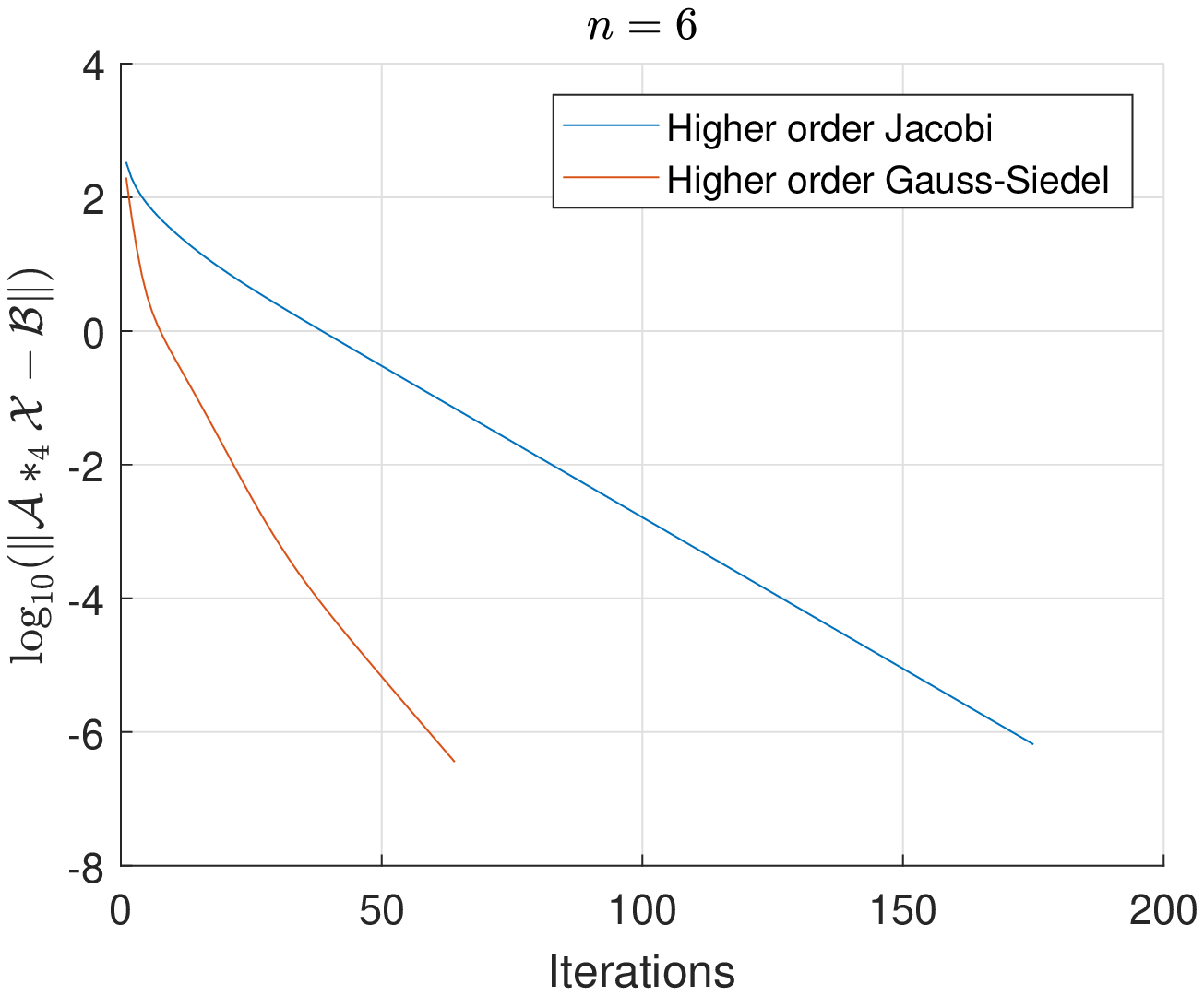}}&
\subfigure[] { \includegraphics[width=0.4\textwidth]{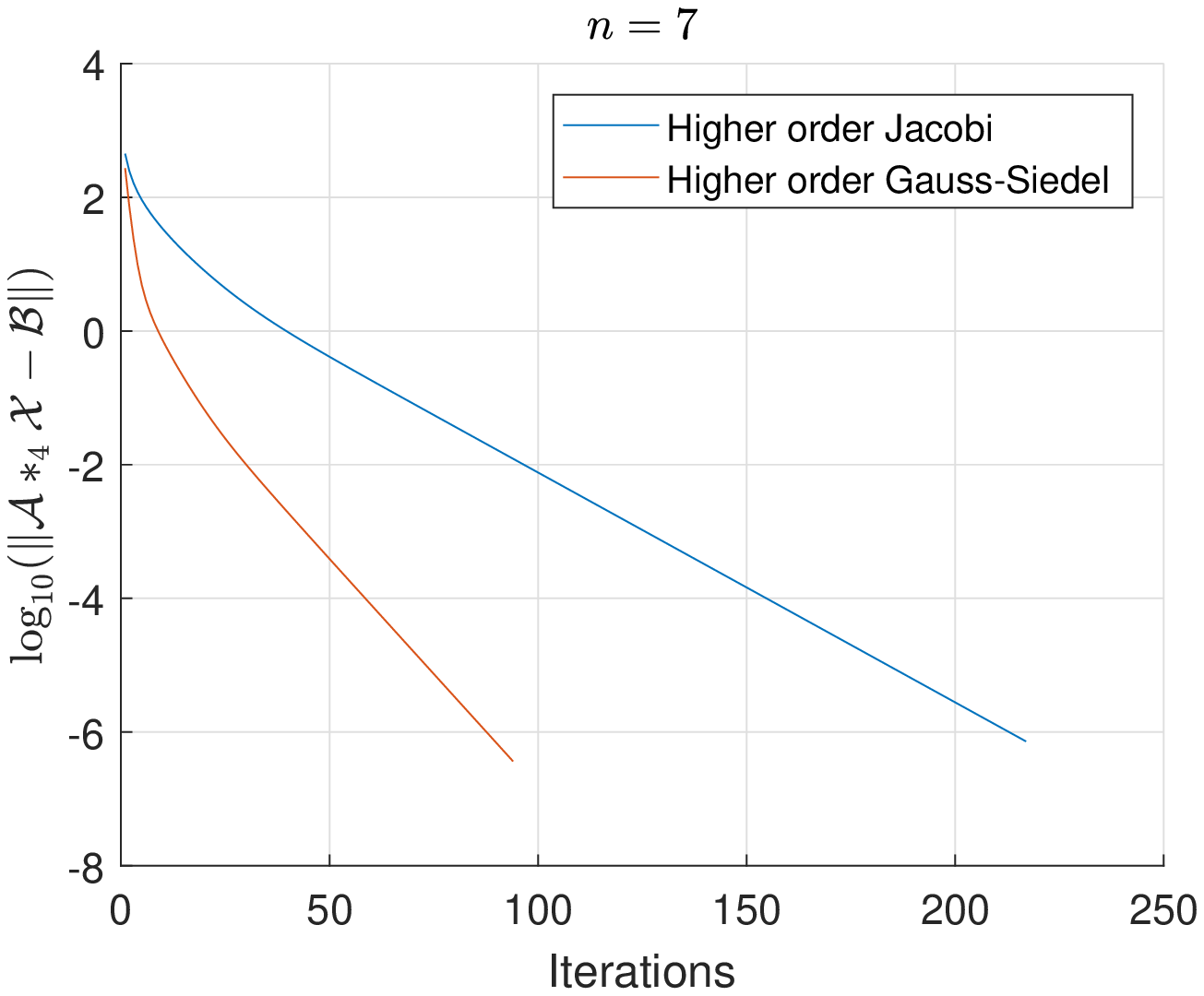}}\\
\subfigure[] { \includegraphics[width=0.4\textwidth]{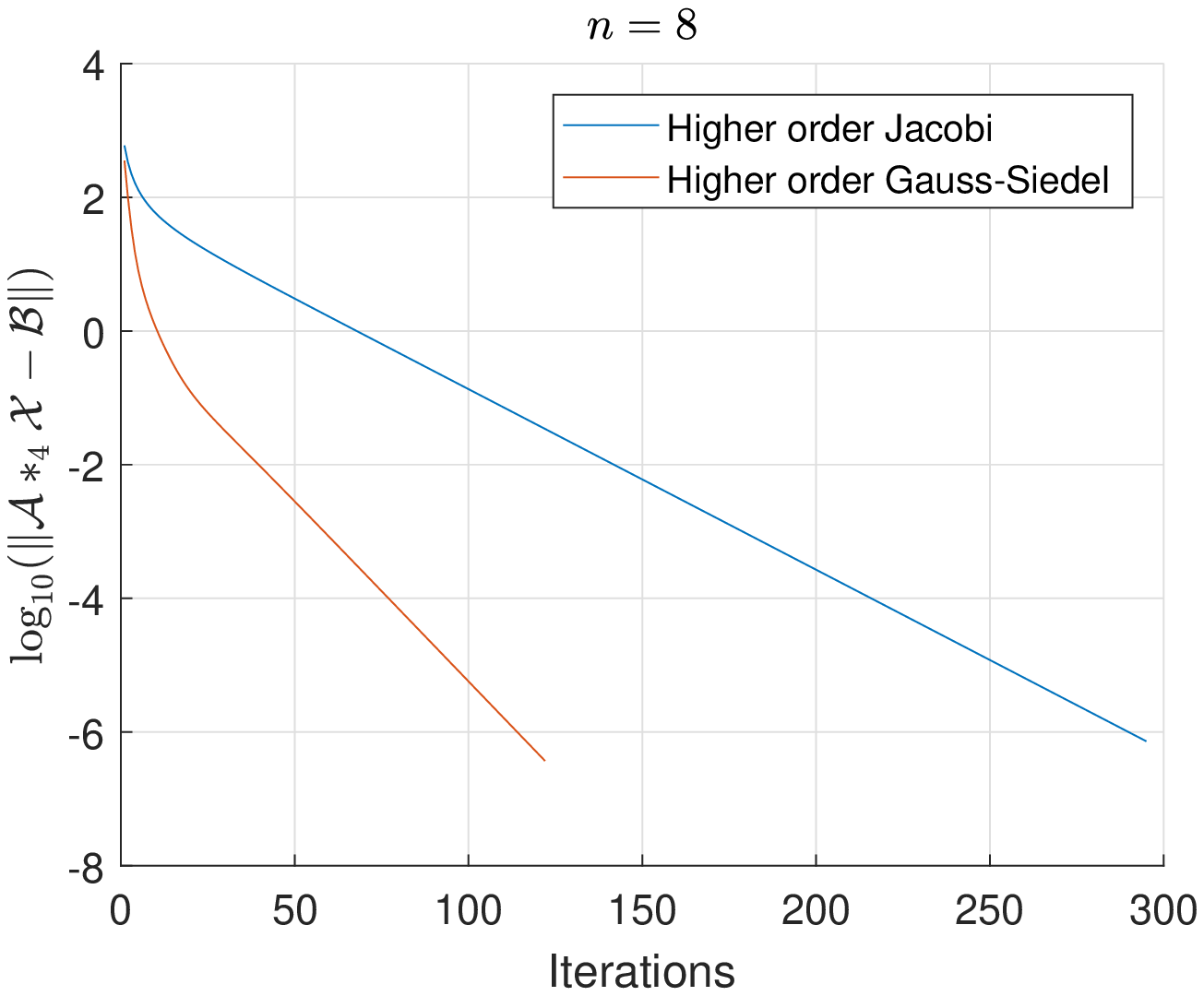}}&
\subfigure[] { \includegraphics[width=0.4\textwidth]{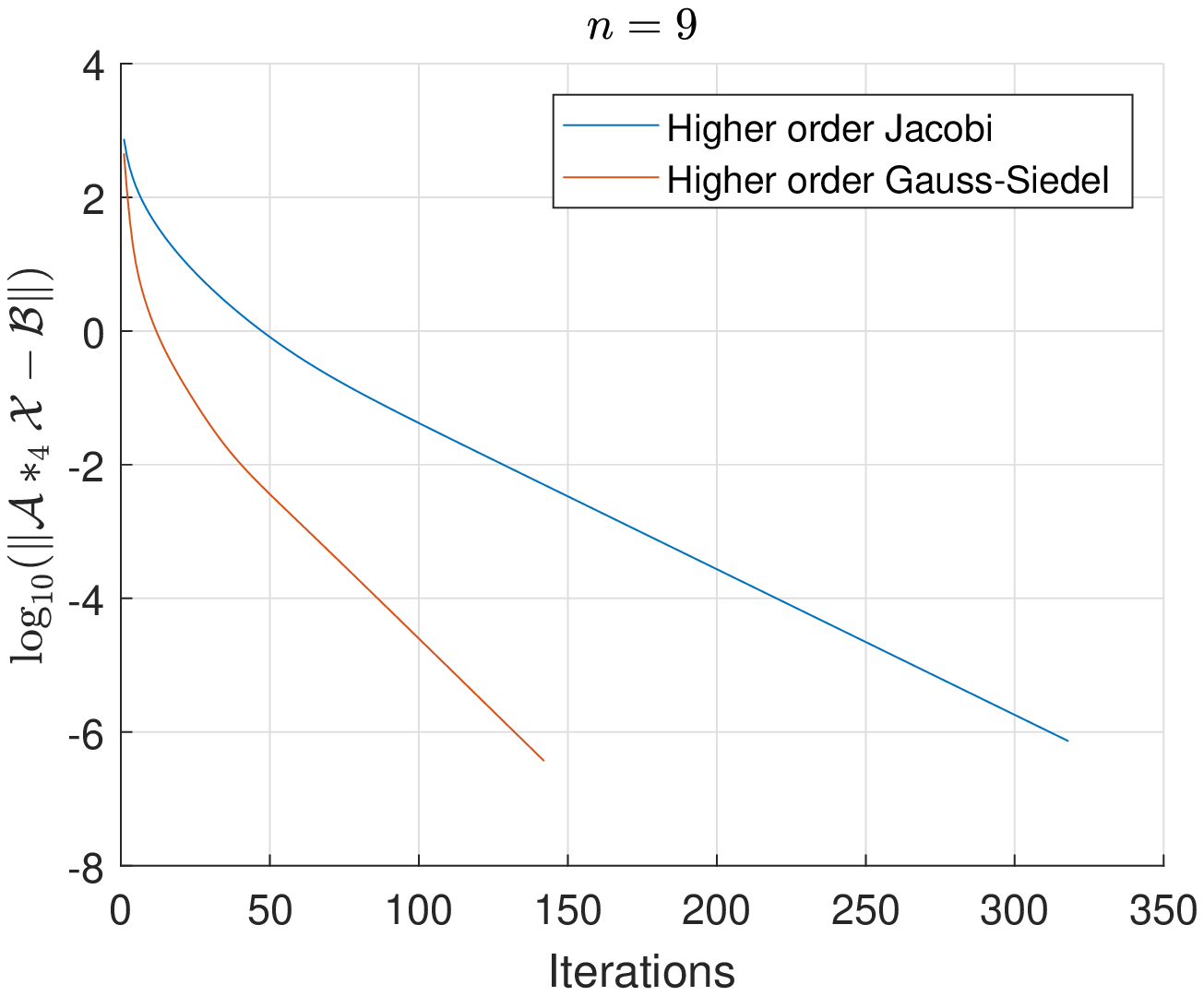}}
\end{tabular}
\caption{Residual error with respect to the number of iterations for different values of $n$.}
\label{exa-2}
\end{center}
\end{figure}
By applying 7-point stencil formula \cite{bral} for 3-dimensional Poisson equation with same boundary conditions, we obtain the following tensor equation 
\begin{equation*}
\mc{A}*_3\mc{X}=\mc{B}, ~ \mc{A}\in\mathbb{R}^{n\times n\times n\times n\times n\times n},~   \mc{X}\in\mathbb{R}^{n\times n\times n}, \mbox{ and } \mc{B}\in\mathbb{R}^{n\times n\times n},
\end{equation*}
where the tensor $\mc{A}$ is the following form 
\begin{eqnarray}\label{3rd}
 \mc{A}=\mc{P}_n\kronecker \mc{I}_n\kronecker \mc{I}_n + \mc{I}_n \kronecker\mc{P}_n\kronecker \mc{I}_n+ \mc{I}_n\kronecker \mc{I}_n\kronecker\mc{P}_n. 
 \end{eqnarray}
 Extending, the same idea to 4-dimensional Poisson problem, we obtain the following multilinear system 
\begin{equation}\label{4dp}
   \mc{A}*_4\mc{X}=\mc{B}, ~ \mc{A}\in\mathbb{R}^{\overbrace{n\times \cdots \times n}^{8\rm\ times}},~   \mc{X}\in\mathbb{R}^{n\times n\times n\times n}, \mbox{ and } \mc{B}\in\mathbb{R}^{n\times n\times n\times n}, 
\end{equation}
where the tensor $\mc{A}$ is the following form 
\begin{eqnarray}\label{4th}
\mc{A}=\mc{P}_n\kronecker \mc{I}_n\kronecker \mc{I}_n \kronecker \mc{I}_n+ \mc{I}_n \kronecker\mc{P}_n\kronecker \mc{I}_n\kronecker \mc{I}_n+ \mc{I}_n\kronecker \mc{I}_n\kronecker\mc{P}_n\kronecker \mc{I}_n+\mc{I}_n\kronecker \mc{I}_n+ \mc{I}_n\kronecker\mc{P}_n.
\end{eqnarray}
In the light of the above Eqs \eqref{2nd}, \eqref{3rd} and \eqref{4th} one can generate the tensor $\mc{A}$ to solve higher dimensional Poisson problem. In order to illustrate the efficiency of the proposed iterative method, we consider the four-dimensional Poisson problem \eqref{4dp} and a tensor $\mc{B}$ from $\mathfrak{R}(A)$. The residual error with respect to the number of iterations for different choices of $n$ are shown in Figure-2. In addition to this, the residual error is also compared with higher order Jacobi iterative method \cite{bral}.
\end{example}

\section{Conclusion}
We have discussed some more results on the Drazin inverse of tensors via the Einstein product to the existing theory. In particular, we have studied different characterizations of  the Drazin inverse and W-weighted Drazin inverse of tensors. Further, the concept of full rank decomposition is used to compute the group inverse of tensors and applications of these results discussed by solving multilinear systems. In addition to this, we have discussed the convergence analysis of the iterative technique.  The obtained results in this paper are important for the tensor splitting theory. During our study, we obtain a few sufficient conditions of the reverse-order law for the Drazin inverse of tensors.  However, various reverse-order laws for  the Drazin inverse of tensor formulas associated with rank and block-tensor  works are currently underway. \\
\section*{Acknowledgments}
R. Behera acknowledges the support provided by Science and Engineering Research Board (SERB), Department of Science and Technology, India, under the Grant No. MTR/2017/000417.
\bibliographystyle{abbrv}
\bibliographystyle{elsarticle-num}
\bibliography{DrazinRAJ}
\end{document}